\tikzstyle{mybox} = [draw=black, very thick, rectangle, rounded corners, inner ysep=5pt, inner xsep=5pt]
\newtheorem{theo}{Theorem}[section]
\newtheorem{lem}[theo]{Lemma}
\theoremstyle{definition}
\newtheorem{defin}[theo]{Definition}
\newtheorem*{lem*}{Lemma}
\newtheorem*{cor*}{Corollary}
\newtheorem*{theo*}{Theorem}
\newcommand{\norm}[1]{\lVert#1\rVert}
\DeclareMathOperator*{\supp}{supp}
\DeclareMathOperator*{\sgn}{sgn}
\def\Xint#1{\mathchoice
{\XXint\displaystyle\textstyle{#1}}%
{\XXint\textstyle\scriptstyle{#1}}%
{\XXint\scriptstyle\scriptscriptstyle{#1}}%
{\XXint\scriptscriptstyle\scriptscriptstyle{#1}}%
\!\int}
\def\XXint#1#2#3{{\setbox0=\hbox{$#1{#2#3}{\int}$ }
\vcenter{\hbox{$#2#3$ }}\kern-.6\wd0}}
\def\dashint{\Xint-}
\def\XXiint#1#2#3{{\setbox0=\hbox{$#1{#2#3}{\iint}$ }
\vcenter{\hbox{$#2#3$ }}\kern-.55\wd0}}
\renewcommand{\d}{\:\:\!\!\mathrm{d}}
\newcommand{\N}{\ensuremath{\mathbb{N}}}
\newcommand{\R}{\ensuremath{\mathbb{R}}}
\renewcommand{\b}{\mathfrak{b}}
\numberwithin{equation}{section}
\patchcmd{\@setaddresses}{\indent}{\noindent}{}{}
\patchcmd{\@setaddresses}{\indent}{\noindent}{}{}
\patchcmd{\@setaddresses}{\indent}{\noindent}{}{}
\patchcmd{\@setaddresses}{\indent}{\noindent}{}{}
\begin{document}
\renewcommand{\refname}{References}
\renewcommand{\abstractname}{Abstract}

\title[Existence, comparison principle and uniqueness]{Existence, comparison principle and uniqueness for doubly nonlinear anisotropic evolution equations}
\date{\today}
\subjclass[2010]{35K61, 35B51, 35D30, 35K10, 35B65. }
\keywords{Doubly nonlinear parabolic equations, Anisotropic equations, Existence, Comparison principle, Uniqueness, Galerkin's method.}

\author[M. Vestberg]{Matias Vestberg}
\address{Matias Vestberg,
Department of Mathematics, Uppsala University,
P.~O.~Box 480, 751 06, Uppsala, Sweden}
\email{matias.vestberg@math.uu.se}

\begin{abstract}
We prove the existence of solutions to the Cauchy-Dirichlet problem associated with a class of doubly nonlinear anisotropic evolution equations. We also demonstrate the existence of solutions to the corresponding Cauchy problem on $\R^N\times(0,T)$. Under some assumptions on the Caratheodory vector field we prove a comparison principle and utilize it to obtain a uniqueness result for the Cauchy-Dirichlet problem.
\end{abstract}
\maketitle

\setcounter{tocdepth}{1}

\begin{center}
\begin{minipage}{12cm}
  \small
  \tableofcontents
\end{minipage}
\end{center}

\vskip0.5cm \noindent 
 
\section{Introduction}\label{sec: intro}
\noindent This paper is concerned with existence, comparison principle and uniqueness results for a class of doubly nonlinear anisotropic evolution equations of the form
\begin{align}\label{eq:diffusion}
\partial_t (|u|^{\alpha-1}u)   - \nabla\cdot A(x,t,u,\nabla u) = f \quad \text{ in } \quad \Omega_T:=\Omega\times (0,T),
\end{align}
where $\Omega \subset \R^N$ is either open and bounded or the whole space $\R^N$. Here, $A$ is a vector field satisfying the structure conditions
\begin{align}
\label{cond:structure1} A(x,t,u, \xi)\cdot \xi &\geq \Lambda^{-1} \sum^N_{i=1}|\xi_i|^{p_i} - \tilde a(x,t),
 \\
\label{cond:structure2} |A_i(x,t,u, \xi)| &\leq \Lambda \big(\sum^N_{k=1} |\xi_k|^{p_k} + \tilde b(x,t) \big)^\frac{p_i-1}{p_i},\hspace{5mm}i \in \{1,\dots, N\},
\end{align}
where $\tilde a, \tilde b \in L^1(\Omega_T)$ are nonnegative functions and $\Lambda > 0$ is a constant. In order to obtain existence results, we also require the strict monotonicity condition
\begin{align}\label{cond:strict-monot}
 (A(x,t,u,\xi) - A(x,t,u,\eta))\cdot (\xi - \eta) > 0, \quad \xi \neq \eta.
\end{align}
To guarantee the measurability of $A(x,t,u,\nabla u)$ we assume the Caratheodory condition, i.e. that $(u,\xi) \mapsto A(x,t,u,\xi)$ is continuous for a.e. $(x,t)\in \Omega_T$. The continuity is also used explicitly when proving the existence of solutions. For the right-hand side, we assume
\begin{align*}
 f \in L^\frac{\alpha+1}{\alpha}(\Omega_T).
\end{align*}
The parameter range considered is 
\begin{align*}
 \alpha > 0, \quad p_j > 1.
\end{align*}

We prove two different existence results. One concerns the Cauchy-Dirichlet problem on a space-time cylinder $\Omega\times (0,T)$ where $\Omega \subset \R^N$ is open and bounded and the other result concerns solutions to the Cauchy problem in $\R^N\times (0,T)$.

For vector fields $A(x,u,\xi)$ that are independent of time and that exhibit a suitable Lipschitz continuity w.r.t. $u$ we are able to prove a comparison principle. In order for the comparison principle to hold, one of the solutions must be zero on the lateral boundary $\partial \Omega \times (0,T)$. We refer to Section \ref{sec:setting} for the precise conditions. 

The comparison principle allows us to obtain a uniqueness result for the Cauchy-Dirichlet problem with with vanishing boundary data on $\Omega\times(0,T)$. Another consequence of the comparison principle is a set of sufficient conditions for the nonnegativity of solutions to the Cauchy-Dirichlet problem.
We remark that all of the previously mentioned conditions and results are valid for the model case
\begin{align*}
 \partial_t(|u|^{\alpha-1}u) - \sum^N_{j=1} \partial_j(|\partial_j u|^{p_j-2} \partial_j u) = 0.
\end{align*}

Previously, an existence result for the Cauchy-Dirichlet problem for the model case with vanishing boundary condition but with a fairly general right-hand side was obtained by Sango in \cite{Sa} using a semidiscretization procedure. In \cite{DeTe}, Degtyarev and Tedeev treat the Cauchy problem in $\R^N\times (0,T)$ for the model case with an additional $u$-dependent term and a Radon measure as initial data. In doing so, they also outline the proof of existence in the model case for bounded spatial domains and vanishing boundary data on $\Omega\times (0,T)$. Recently, an existence result for anisotropic equations without double nonlinearity but with a rather weak coercivity condition was obtained in \cite{ZoReWa}. A Barenblatt solution for the model case of anisotropic diffusion without double nonlinearity (i.e. $\alpha=1$) was constructed in the fast diffusion case in \cite{FeVaVo}, and the case of slow diffusion was treated in \cite{CiaMoVe}. A very singular solution for an anisotropic fast diffusion equation of porous medium type was constructed in \cite{Va}. An elliptic doubly anisotropic equation was considered in \cite{LeEl}.
In the doubly nonlinear isotropic case, the literature on existence is vast. Techniques relevant for our case were used by Laptev in \cite{La} and \cite{La2}. Other works include \cite{AlLu, Be, Iv, Ish} to name a few.

Apart from the general structure conditions, one major difference between our existence result for the Cauchy-Dirichlet problem and the previously mentioned works on doubly nonlinear anisotropic diffusion is that we consider the case of nonvanishing values on the lateral boundary $\partial \Omega \times (0,T)$. The function describing the boundary values is allowed to be fairly irregular, and we have carefully explored which assumptions are needed for the argument to work. Our proof is based on Galerkin's method, and the argument is somewhat different in the two parameter ranges $\alpha\in(0,1)$ and $\alpha\geq 1$. Our aim is to provide clear and detailed proofs, exploring how the different parameter ranges, and assumptions on the boundary data as well as the structure conditions affect the arguments.

Comparison principles have a long history in the theory of diffusion equations. For the porous medium equation, various comparison principles have been obtained in \cite{LiPe, KiLiLu, AvLu}. See also the book \cite{Va2}. For Trudinger's equation, a special case of doubly nonlinear diffusion, a comparison principle was obtained in \cite{LiLi}. Under extra assumptions on the time derivative, comparison principles for isotropic doubly nonlinear equations were obtained in \cite{AlLu, Ba, Di}, and under other assumptions in \cite{Ot, IvMkJae, Iv}. A comparison principle for nonnegative solutions where one solution vanishes on the lateral boundary was proved in \cite{BoeDuGiaLiSche} for general doubly nonlinear isotropic equations in the case of time-independent vector fields. A comparison principle for the model case of doubly nonlinear equations was proved in \cite{BoeStru} under the assumption that the larger (super)solution has a positive lower bound. In the model case for anisotropic diffusion without double nonlinearity, some comparison principles were proved in \cite{CiaMoVe}. 

The  techniques employed in the proof of the comparison principle in this work are similar to those appearing in \cite{BoeDuGiaLiSche}, but we remark that we do not need to assume that the solutions are nonnegative on the whole domain. This observation appears to be new even in the isotropic case.

\vspace{2mm}
\noindent{\bf Acknowledgments.} This work was supported by the Wallenberg AI, Autonomous Systems and Software Program (WASP) funded by the Knut and Alice Wallenberg Foundation. The author wishes to express his gratitude towards Simone Ciani and Vincenzo Vespri for valuable discussions regarding the subject.

\section{Setting and main results}\label{sec:setting}
In this section we define the concept of weak solutions to the Cauchy-Dirichlet problem on $\Omega_T := \Omega \times (0,T)$, and the Cauchy problem on $\R^N \times (0,T)$. We also present our main results. We start by presenting the anisotropic Sobolev spaces that appear in the definitions.
 Given a vector ${\bf p} = (p_1,\dots, p_N)$ with $p_i > 1$ we set 
\begin{align*}
W^{1, {\bf p}}(\Omega) &:= \{ v \in W^{1,1}(\Omega)\,|\, \partial_i v \in L^{p_i}(\Omega)\},
\\
 W^{1, {\bf p}}_{\textnormal{o}}(\Omega) &:= \{ v \in W^{1,1}_\textrm{o}(\Omega)\,|\, \partial_i v \in L^{p_i}(\Omega)\}.
\end{align*}
The space $W^{1, {\bf p}}_{\textnormal{o}}(\Omega)$ which is mentioned above for completeness has been used extensively previously in the regularity theory, see for example \cite{CiaMoVe} and \cite{CiaVeVe}. However, when considering solutions to boundary value problems, it seems more natural to work with the space 
\begin{align*}
 \overline W^{1, {\bf p}}_{\textnormal{o}}(\Omega) := \overline{C^\infty_{\textnormal{o}}(\Omega)} \subset W^{1, {\bf p}}(\Omega),
\end{align*}
i.e. the closure of $C^\infty_{\textnormal{o}}(\Omega)$ in $W^{1, {\bf p}}(\Omega)$ w.r.t. the norm $u\mapsto \norm{u}_{L^1(\Omega)} + \sum^N_{j=1} \norm{\partial_j u}_{L^{p_j}(\Omega)}$. For bounded domains $\Omega$ the space $\overline W^{1, {\bf p}}_{\textnormal{o}}(\Omega)$ is contained in $W^{1, {\bf p}}_{\textnormal{o}}(\Omega)$. For bounded convex domains $\Omega$ the spaces coincide, however to our knowledge it is not known if $W^{1, {\bf p}}_{\textnormal{o}}(\Omega)$ and $\overline W^{1, {\bf p}}_{\textnormal{o}}(\Omega)$ coincide in general.

As we are working with evolution equations we will also need the following space involving time:
\begin{align*}
 L^{\bf p}(0,T; W^{1, {\bf p}}(\Omega)) &:= \{ v \in L^1(0,T;W^{1,1}(\Omega)) \,|\, \partial_i v \in L^{p_i}(\Omega_T)\}.
\end{align*}
Aided by the inclusion $i : \overline W^{1, {\bf p}}_{\textnormal{o}}(\Omega) \hookrightarrow L^1(\Omega)$, the continuous linear maps $\partial_k: \overline W^{1, {\bf p}}_{\textnormal{o}}(\Omega) \to L^{p_k}(\Omega)$, and the space $E$  consisting of equivalence classes of all measurable functions $(0,T) \to \overline W^{1, {\bf p}}_{\textnormal{o}}(\Omega)$ we may now introduce the Banach space 
\begin{align*}
 L^{\bf p}(0,T; \overline W^{1, {\bf p}}_{\textnormal{o}}(\Omega)) = \{ u \in E \,|\, i \circ u \in L^1(0,T; L^1(\Omega)), \, \partial_k \circ u \in L^{p_k}(0,T; L^{p_k}(\Omega)) \},
\end{align*}
which represents functions taking the value zero on the lateral boundary $\partial \Omega \times (0,T)$. This is a rather abstract looking definition relying on the Bochner integral, but as one might expect, the space is also isomorphic to a space of functions defined on $\Omega_T$.
\begin{lem}\label{lem:isomorphic-spaces}
 The space $L^{\bf p}(0,T; \overline W^{1, {\bf p}}_{\textnormal{o}}(\Omega))$ is isometrically isomorphic to the closure of $C^\infty_{\textnormal{o}}(\Omega_T)$  in $\{ f \in L^1(\Omega_T)\,| \, \partial_k f \in L^{p_k}(\Omega_T)\}$ with respect to the norm
 \begin{align*}
  v \mapsto \norm{v}_{L^1(\Omega_T)} + \sum^N_{k=1} \norm{\partial_k v}_{L^{p_k}(\Omega_T)}.
 \end{align*}
\end{lem}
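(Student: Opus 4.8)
The plan is to construct an explicit isometric isomorphism between the two spaces and check that it is surjective. On the one side we have the abstract Bochner space
\[
V := L^{\bf p}(0,T; \overline W^{1, {\bf p}}_{\textnormal{o}}(\Omega)),
\]
and on the other the concrete space $X$, defined as the closure of $C^\infty_{\textnormal{o}}(\Omega_T)$ in the normed space $\{ f \in L^1(\Omega_T)\,|\, \partial_k f \in L^{p_k}(\Omega_T), \, k=1,\dots,N\}$ equipped with the stated norm. First I would recall the standard fact (used in the classical identification of $L^p(0,T;L^q)$ with a space of functions on $\Omega_T$) that for $u \in V$ the map $(x,t)\mapsto u(t)(x)$ is well-defined as an element of $L^1(\Omega_T)$ via Fubini-type arguments for Bochner integrals, and that the distributional derivatives $\partial_k$ of this function on $\Omega_T$ coincide a.e. with $(x,t)\mapsto (\partial_k \circ u)(t)(x)$. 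This gives a linear map $\Phi: V \to L^1(\Omega_T)$ with $\partial_k(\Phi u) = \Phi_k(\partial_k\circ u) \in L^{p_k}(\Omega_T)$, and by the monotone convergence / Tonelli theorem one checks
\[
\norm{\Phi u}_{L^1(\Omega_T)} + \sum_{k=1}^N \norm{\partial_k \Phi u}_{L^{p_k}(\Omega_T)} = \norm{u}_V,
\]
so $\Phi$ is an isometry onto its image.

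It remains to identify that image with $X$. The inclusion $\Phi(V) \subseteq X$ I would prove by a density argument: simple functions $\sum_j \mathbbm{1}_{(a_j,b_j)}(t)\varphi_j(x)$ with $\varphi_j \in C^\infty_{\textnormal{o}}(\Omega)$ are dense in $V$ (since such $\varphi_j$ are dense in $\overline W^{1, {\bf p}}_{\textnormal{o}}(\Omega)$ by definition, and step functions in $t$ are dense in the relevant Bochner space), and the image under $\Phi$ of such a simple function can be approximated in the $X$-norm by mollification in the time variable, producing elements of $C^\infty_{\textnormal{o}}(\Omega_T)$; hence $\Phi$ maps a dense subset of $V$ into $X$, and since $\Phi$ is an isometry and $X$ is closed, $\Phi(V)\subseteq X$. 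For the reverse inclusion $X \subseteq \Phi(V)$, note that $C^\infty_{\textnormal{o}}(\Omega_T) \subseteq \Phi(V)$ directly (a smooth compactly supported $g$ on $\Omega_T$ defines $t \mapsto g(\cdot,t) \in C^\infty_{\textnormal{o}}(\Omega) \subseteq \overline W^{1, {\bf p}}_{\textnormal{o}}(\Omega)$, which is continuous hence measurable into the Banach space, and lies in the right $L^{\bf p}$ integrability class); since $\Phi(V)$ is complete (being isometric to the Banach space $V$) hence closed in $X$, and it contains the dense subset $C^\infty_{\textnormal{o}}(\Omega_T)$, we get $X \subseteq \Phi(V)$. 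Combining the two inclusions, $\Phi: V \to X$ is a surjective isometry.

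The main obstacle is the measurability and coincidence-of-derivatives bookkeeping in the first step: one must be careful that the pointwise-in-$t$ assignment $u \mapsto u(t)$ interacts correctly with distributional differentiation in $x$ over the product domain, i.e. that the Bochner integral $\int_0^T \langle u(t), \partial_k^* \psi(\cdot,t)\rangle\,dt$ equals the honest integral $\iint_{\Omega_T} u\,\partial_k^*\psi$ for test functions $\psi$, and symmetrically for the $t$-derivative one should not need anything since no time regularity is assumed. This is essentially the content of the classical lemma identifying $L^p(0,T;W^{1,q}(\Omega))$ with a function space on $\Omega_T$, adapted here to the anisotropic norm; the anisotropy itself causes no new difficulty because each $\partial_k$ is handled in its own $L^{p_k}$ separately. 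The density step for the reverse inclusion is routine once one observes that the definition of $\overline W^{1, {\bf p}}_{\textnormal{o}}(\Omega)$ as the $C^\infty_{\textnormal{o}}$-closure is exactly what makes step functions valued in smooth compactly supported functions dense in $V$.
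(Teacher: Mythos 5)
Your proposal is correct and follows essentially the same strategy as the paper: the paper's map $S\colon C^\infty_{\textnormal{o}}(\Omega_T)\to L^{\bf p}(0,T;\overline W^{1,{\bf p}}_{\textnormal{o}}(\Omega))$, $Sv(t)=v(\cdot,t)$, is simply the inverse of your $\Phi$, and both proofs reduce to the observation that the norms match up (isometry) plus a density argument showing that time-step functions with values in $C^\infty_{\textnormal{o}}(\Omega)$ approximate any element of the Bochner space and can in turn be smoothed in $t$. The only place the paper is more careful is in that density step: since $L^{\bf p}(0,T;\overline W^{1,{\bf p}}_{\textnormal{o}}(\Omega))$ is not a single Bochner $L^p$ space, it first cuts off near $t=0,T$ and mollifies in time to get a continuous $\overline W^{1,{\bf p}}_{\textnormal{o}}(\Omega)$-valued function before passing to piecewise constants, which also guarantees that the final time-mollification stays inside $(0,T)$.
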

\noindent For the proof we refer to Appendix \ref{app:equiv-sol}.
In the study of solutions to the Cauchy problem we will need the space
\begin{align*}
 U^{1,{\bf p}}_{\alpha+1} := \{ u \in L^{\alpha+1}(S_T)\,|\, \partial_k u \in L^{p_k}(S_T),\, k \in \{1\dots N\}\} 
\end{align*}
endowed with the norm 
\begin{align*}
 \norm{u}_{U^{1,{\bf p}}_{\alpha+1}} := \norm{u}_{L^{\alpha+1}(S_T)} + \sum^N_{k=1} \norm{\partial_k u}_{L^{p_k}(S_T)},
\end{align*}
and the closed subspace 
\begin{align*}
 \mathring{U}^{1,{\bf p}}_{\alpha+1} := \textnormal{cl} \{ \varphi\in C^\infty(\R^N \times [0,T])\,|\, \supp \varphi \subset K\times[0,T] \textnormal{ for some compact } K \subset \R^N \}.
\end{align*}

We use the following notion of weak solutions of the equation. 
\begin{defin}[Weak solutions]\label{def:weaksol}
A function $u \in L^{\bf p}(0,T; W^{1, {\bf p}}(\Omega)) \cap L^{\alpha+1}(\Omega_T)$ for bounded $\Omega$ is a solution of \eqref{eq:diffusion} if 
\begin{align}\label{eq:weak_form}
&\iint_{\Omega_T} A(x,t,u,\nabla u)\cdot \nabla \varphi- |u|^{\alpha-1} u\partial_t \varphi\d x\d t = \iint_{\Omega_T} f \varphi \d x \d t,
\end{align}
for all $\varphi \in C^\infty_{\textnormal{o}}(\Omega_T)$. Weak solutions on $S_T:= \R^N\times(0,T)$ are defined in an analogous way except that the integrals are taken over $S_T$ and we require $u$ to be in $U^{1,{\bf p}}_{\alpha+1}$. 
\end{defin}
For the domain $S_T$ we have chosen a different function space simply because it is not natural to assume $L^1$-integrability of the function and its derivatives in this setting. A formulation which is consistent for both type of domains is to require that the solution is in $L^{\alpha+1}$ and each derivative $\partial_k u$ is in $L^{p_k}$ on the full domain of the solution. It would also be possible to consider solutions with local integrability, but in this paper we will only work with globally integrable solutions.
We are now ready to introduce the notion of a weak solution to the Cauchy-Dirichlet problem.

\begin{defin}\label{def:prob-CD}
  Let $g\in L^{\bf p}(0,T; W^{1, {\bf p}}(\Omega))$. Suppose that $u_0 \in L^{\alpha+1}(\Omega)$ and $f \in L^\frac{\alpha+1}{\alpha}(\Omega_T)$. We say that $u \in L^{\bf p}(0,T; W^{1, {\bf p}}(\Omega)) \cap C([0,T];L^{\alpha+1}(\Omega))$ is a solution to the Cauchy-Dirichlet problem
 \begin{align}\label{prob:C-D}
   \left\{
\begin{array}{ll}
\partial_t \big( |u|^{\alpha -1} u\big)  - \nabla\cdot A(x,t,u,\nabla u) = f, & \quad \text{in } \Omega_T 
\\[5pt]
 u(x,0) = u_0(x),  & \quad x \in \Omega,
 \\
 u = g,  & \quad \text{on } \partial \Omega \times (0,T).
\end{array}
\right.
 \end{align}
if the following conditions hold:
\begin{enumerate}
 \item $u$ is a solution to the PDE in the sense of Definition \ref{def:weaksol}.
 \item\label{u(t)-convg-to-u_0} $u(t) \to u_0$ in $L^{\alpha+1}(\Omega)$ as $t\to 0$.
 \item $u \in g + L^{\bf p}(0,T; \overline W^{1, {\bf p}}_{\textrm{o}}(\Omega))$.
\end{enumerate}
\end{defin}
Note that since in the previous definition $u$ is assumed to belong to $C([0,T]; L^{\alpha+1}(\Omega))$, the function $u(t) \in L^{\alpha+1}(\Omega)$ is well-defined for every $t\in [0,T]$, and thus condition \eqref{u(t)-convg-to-u_0} makes sense. 
For solutions to the Cauchy problem on $S_T := \R^N\times(0,T)$ we make the following definition.
\begin{defin}\label{def:prob-Cauchy}
 Let $u_0\in L^{\alpha+1}(\R^N)$  and let $f\in L^\frac{\alpha+1}{\alpha}(S_T)$. We say that
 \\
 \noindent $u \in C([0,T]; L^{\alpha+1}(\R^N)) \cap U^{1,{\bf p}}_{\alpha+1}$ is a solution to the Cauchy Problem 
 \begin{align}\label{prob:Cauchy}
   \left\{
\begin{array}{ll}
\partial_t \big( |u|^{\alpha -1} u\big)  - \nabla\cdot A(x,t,u,\nabla u) = f, & \quad \text{in } S_T 
\\[5pt]
 u(x,0) = u_0(x),  & \quad x \in \R^N,
\end{array}
\right.
 \end{align}
 if the following conditions hold:
 \begin{enumerate}
 \item $u$ is a solution to the PDE on $S_T$ in the sense of Definition \ref{def:weaksol}.
 \item $u(t) \to u_0$ in $L^{\alpha+1}(\R^N)$ as $t\to 0$.
 \end{enumerate}
\end{defin} 

\subsection{Existence}
For the Cauchy-Dirichlet problem we prove the following existence result.
 \begin{theo}\label{thm:existence}
  Let $\Omega$ be open and bounded. Suppose that $A(x,t,u,\xi)$ is a Caratheodory vector field satisfying \eqref{cond:structure1}, \eqref{cond:structure2} and \eqref{cond:strict-monot}. Suppose that the following conditions hold for the initial and boundary values and right-hand side:
  \begin{align*}
   u_0 \in L^{\alpha+1}(\Omega),\quad 
   g \in L^\infty(\Omega_T) \cap L^{\bf p}(0,T; W^{1, {\bf p}}(\Omega)), \quad  \partial_t g \in L^{\alpha+1}(\Omega_T), \quad f \in L^\frac{\alpha+1}{\alpha}(\Omega_T).
  \end{align*}
  Then there is a solution $u \in L^{\bf p}(0,T; W^{1, {\bf p}}(\Omega)) \cap C([0,T];L^{\alpha+1}(\Omega))$ to the Cauchy-Dirichlet problem \eqref{prob:C-D} in the sense of Definition \ref{def:prob-CD}.
 \end{theo}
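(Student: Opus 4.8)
The plan is to prove existence via Galerkin's method, reducing the PDE to a finite-dimensional system of ODEs, obtaining uniform a priori estimates, and passing to the limit using monotonicity (Minty's trick) together with compactness to handle the nonlinear time term $|u|^{\alpha-1}u$. A preliminary reduction: by replacing $u$ with $u - g$, we may absorb the boundary data. More precisely, writing $w = u - g$, the function $w$ should lie in $L^{\bf p}(0,T;\overline W^{1,{\bf p}}_{\textnormal{o}}(\Omega))$, so it is natural to build the Galerkin approximations inside this space. Fix a countable basis $(e_j)_{j\in\N}$ of $\overline W^{1,{\bf p}}_{\textnormal{o}}(\Omega)$ (e.g. smooth compactly supported functions whose span is dense), and seek $w_m(t) = \sum_{j=1}^m c_j^m(t) e_j$ solving the projected weak formulation
\begin{align*}
 \frac{d}{dt}\int_\Omega |w_m+g|^{\alpha-1}(w_m+g)\, e_j \d x + \int_\Omega A(x,t,w_m+g,\nabla w_m + \nabla g)\cdot \nabla e_j \d x = \int_\Omega f e_j \d x
\end{align*}
for $j = 1,\dots,m$, with initial data $w_m(0)$ chosen as a suitable projection of $u_0 - g(0)$. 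Because the map $v\mapsto |v|^{\alpha-1}v$ is a homeomorphism of $\R$ but not smooth when $\alpha\in(0,1)$ or $\alpha\ge 1$ non-integer, solvability of this ODE system is where the two parameter ranges $\alpha\in(0,1)$ and $\alpha\ge 1$ must be treated separately: one works with the variable $b(v):=|v|^{\alpha-1}v$ and inverts, invoking Peano's theorem (plus a continuation argument using the a priori bounds) rather than Picard's.

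Next I would derive the \emph{a priori estimates}. Testing the approximate equation with $w_m$ itself (i.e. multiplying by $c_j^m$ and summing), the parabolic term yields, after an integration by parts in time and using the elementary inequality relating $(|a|^{\alpha-1}a - |b|^{\alpha-1}b)(a-b)$ to $\big||a|^{(\alpha+1)/2}\mathrm{sgn}\,a - |b|^{(\alpha+1)/2}\mathrm{sgn}\,b\big|^2$, control of $\sup_{t}\|u_m(t)\|_{L^{\alpha+1}(\Omega)}^{\alpha+1}$ in terms of the data; here the hypotheses $g\in L^\infty$, $\partial_t g \in L^{\alpha+1}(\Omega_T)$ and $f\in L^{(\alpha+1)/\alpha}(\Omega_T)$ enter, the latter two through Young's inequality against the $\partial_t g$ and $f$ terms produced by the substitution. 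The coercivity condition \eqref{cond:structure1} then gives a uniform bound on $\sum_i \|\partial_i u_m\|_{L^{p_i}(\Omega_T)}^{p_i}$, while \eqref{cond:structure2} gives a uniform bound on each $A_i(x,t,u_m,\nabla u_m)$ in $L^{p_i/(p_i-1)}(\Omega_T)$. Consequently, up to a subsequence, $u_m \rightharpoonup u$ weakly in $L^{\bf p}(0,T;W^{1,{\bf p}}(\Omega))$ and weakly-$*$ in $L^\infty(0,T;L^{\alpha+1}(\Omega))$, $A(\cdot,u_m,\nabla u_m)\rightharpoonup \chi$ componentwise, and $|u_m|^{\alpha-1}u_m$ is bounded in $L^{(\alpha+1)/\alpha}(\Omega_T)$; from the equation, $\partial_t(|u_m|^{\alpha-1}u_m)$ is bounded in a suitable dual space, so an Aubin--Lions--Simon type argument gives strong convergence $|u_m|^{\alpha-1}u_m \to |u|^{\alpha-1}u$ in $L^1(\Omega_T)$ and a.e., whence also $u_m\to u$ a.e.

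The remaining task is to \emph{identify the limit vector field}, i.e. show $\chi = A(x,t,u,\nabla u)$, and to verify the three conditions of Definition \ref{def:prob-CD}. For the identification one uses the Minty–Browder monotonicity trick: from the energy identity one shows $\limsup_m \iint_{\Omega_T} A(\cdot,u_m,\nabla u_m)\cdot\nabla u_m \le \iint_{\Omega_T}\chi\cdot\nabla u$ (the parabolic term contributing $\le$ via weak lower semicontinuity of the convex functional $v\mapsto \int |v|^{\alpha+1}$, comparing the initial and terminal slices), then tests \eqref{cond:strict-monot} with $\eta = \nabla v$ for arbitrary $v$, using the a.e. convergence $u_m\to u$ and the Carathéodory continuity of $A$ to pass to the limit in the argument slot, and concludes $\chi = A(x,t,u,\nabla u)$ by the standard argument. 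Continuity in time $u\in C([0,T];L^{\alpha+1}(\Omega))$ and attainment of the initial datum follow from the fact that $t\mapsto \int_\Omega |u(t)|^{\alpha-1}u(t)\psi\d x$ is (absolutely) continuous for test functions $\psi$, combined with the uniform $L^{\alpha+1}$ bound and a monotonicity/lower-semicontinuity argument à la the one used to recover strong continuity for doubly nonlinear equations; the boundary condition $u - g \in L^{\bf p}(0,T;\overline W^{1,{\bf p}}_{\textnormal{o}}(\Omega))$ is immediate since each $w_m$ lies in that (weakly closed) space. The main obstacle is the simultaneous handling of the nonlinear, non-Lipschitz time derivative and the anisotropic gradient term: getting enough compactness on $|u_m|^{\alpha-1}u_m$ to pass to the limit, and making the Minty argument work with only a.e. convergence of $u_m$ (not strong $W^{1,{\bf p}}$ convergence) while the vector field depends on $u$ as well as $\nabla u$; the case split $\alpha\in(0,1)$ versus $\alpha\ge1$ will reappear in the precise form of the elementary inequalities and the choice of test functions used to absorb $\partial_t g$.
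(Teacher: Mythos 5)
Your outline captures the right skeleton — Galerkin approximation, energy estimates from the structure conditions, Minty's trick, and weak closedness of $g + L^{\bf p}(0,T;\overline W^{1,{\bf p}}_{\textnormal{o}}(\Omega))$ for the boundary condition — but there are two places where the plan as written has a genuine gap.

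\emph{ODE solvability.} You propose to solve the Galerkin system by ``working with the variable $b(v) := |v|^{\alpha-1}v$ and inverting, invoking Peano's theorem.'' This does not give a well-posed finite-dimensional problem, because $b$ does not respect the linear structure of the Galerkin space: $|w_m + g|^{\alpha-1}(w_m+g)$ is not a linear combination of the basis functions $e_j$, so you cannot change coordinates to $b(w_m)$ inside $V_m$. If instead you differentiate the projected equation in $t$, the coefficient matrix in front of $\xi'$ is $F_{mk}(\xi,t) = \alpha\int_\Omega |g+\sum\xi_j e_j|^{\alpha-1} e_k e_m\,\d x$, and this is \emph{not} invertible in general: for $\alpha>1$ the weight $|u_n|^{\alpha-1}$ degenerates to zero where $u_n$ vanishes, and for $\alpha<1$ it blows up there (making $F$ well-defined but its entries potentially infinite without further control). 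The paper handles this by regularizing the time term, replacing $\alpha|u_n|^{\alpha-1}\partial_t u_n$ by $\alpha(|u_n|+\varepsilon_n)^{\alpha-1}\partial_t u_n$, which makes $F$ symmetric positive-definite with eigenvalues bounded away from $0$ (with a bound depending on $|\xi|$ and, for $\alpha\geq 1$, on $\varepsilon_n$). Only then does the system take the normal form $\xi' = H(\xi,t)$ with $H$ Carath\'eodory, and Schauder/Peano applies. The extra $\varepsilon_n$ then has to be removed in the limit, which is done by tracking the modified quantities $\Gamma_n, B_n$. Your sketch needs some such regularization, or a different device, before Peano can be invoked.

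\emph{Compactness.} You invoke an Aubin--Lions--Simon argument, for which you claim ``from the equation, $\partial_t(|u_m|^{\alpha-1}u_m)$ is bounded in a suitable dual space.'' The Galerkin equation only controls the pairing of $\partial_t(|u_m|^{\alpha-1}u_m)$ (or rather $\partial_t B_m(u_m)$) against test functions in $V_m$; to promote this to a uniform bound in $(W^{1,{\bf p}}_{\textnormal{o}}\cap L^{\alpha+1})^*$ you would need the Galerkin projections to be uniformly bounded on that space, which is not automatic and not addressed. The paper avoids this entirely by proving translation estimates in time and space directly from the Galerkin identity (testing with $w_n(\cdot,t+h)-w_n(\cdot,t)$, which \emph{is} a legal $V_n$ test function) and invoking the Kolmogorov--Riesz/Sobolev compactness criterion (following Laptev). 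That route is self-contained and avoids dual-norm bounds on the time derivative. If you want to keep Aubin--Lions you must supply the uniform projector bound, or else switch to the translation-estimate route.

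Two smaller points: (a) for the energy estimate, the relevant elementary inequality is the Raviart one $(|a|^{\alpha-1}a - |b|^{\alpha-1}b)a \ge \tfrac{\alpha}{\alpha+1}(|a|^{\alpha+1}-|b|^{\alpha+1})$, not the square-root comparison you cite, which is better suited to difference-quotient arguments; (b) the identification of the terminal trace $|u|^{\alpha-1}u(T)$ (needed for the $\limsup$ in Minty) requires a Raviart-type lemma for the limit function $u$ with the nontrivial boundary datum $g$ present — the paper proves this (Lemma \ref{lem:initial-val}) and it is not entirely routine.
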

For the Cauchy problem in $\R^N$ we prove the following.
\begin{theo}\label{thm:existence-S_T}
 Let $u_0 \in L^{\alpha+1}(\R^N)$ and $f \in L^\frac{\alpha+1}{\alpha}(S_T)$. Let $A(x,t,u,\xi)$ be a Caratheodory vector field satisfying the structure conditions \eqref{cond:structure1} and \eqref{cond:structure2} and  \eqref{cond:strict-monot} for $(x,t) \in S_T$. Then there is a solution $u\in \mathring{U}^{1,{\bf p}}_{\alpha+1}$ to the Cauchy problem \ref{prob:Cauchy} in the sense of Definition \ref{def:prob-Cauchy}. 
\end{theo}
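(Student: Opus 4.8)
The plan is to reduce the Cauchy problem on $\R^N\times(0,T)$ to the Cauchy-Dirichlet problem already handled in Theorem \ref{thm:existence} by exhausting $\R^N$ with balls $B_R$ and passing to the limit $R\to\infty$. First I would fix an increasing sequence $R_j\to\infty$ and, for each $j$, apply Theorem \ref{thm:existence} on the cylinder $(B_{R_j})_T$ with boundary data $g\equiv 0$ (which trivially satisfies $g\in L^\infty\cap L^{\bf p}(0,T;W^{1,{\bf p}})$ with $\partial_t g\in L^{\alpha+1}$), initial data $u_0|_{B_{R_j}}$, and right-hand side $f|_{(B_{R_j})_T}$; the structure and monotonicity conditions hold on $S_T$ and hence a fortiori on each subcylinder. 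This produces solutions $u_j \in L^{\bf p}(0,T;\overline W^{1,{\bf p}}_{\textnormal o}(B_{R_j}))\cap C([0,T];L^{\alpha+1}(B_{R_j}))$, which I extend by zero to all of $\R^N$; by Lemma \ref{lem:isomorphic-spaces} the extensions satisfy the corresponding global integrability and, after extension, lie in $\mathring U^{1,{\bf p}}_{\alpha+1}$ (being limits of functions compactly supported in space).

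The second step is to obtain uniform-in-$j$ a priori bounds. Testing the weak formulation with (a Steklov-averaged, truncated version of) $u_j$ and using \eqref{cond:structure1} together with $f\in L^{\frac{\alpha+1}{\alpha}}$ and $u_0\in L^{\alpha+1}$ and Young's inequality, I expect the standard energy estimate
\[
 \sup_{t\in[0,T]}\int_{\R^N}|u_j(\cdot,t)|^{\alpha+1}\d x + \sum_{k=1}^N\iint_{S_T}|\partial_k u_j|^{p_k}\d x\d t \le C\big(\norm{u_0}_{L^{\alpha+1}}, \norm{f}_{L^{(\alpha+1)/\alpha}}, \norm{\tilde a}_{L^1}, \Lambda, T\big),
\]
with $C$ independent of $j$. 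Via \eqref{cond:structure2} this also bounds $A(x,t,u_j,\nabla u_j)$ in a product of $L^{p_k/(p_k-1)}$ spaces. Hence, along a subsequence, $u_j\rightharpoonup u$ in $U^{1,{\bf p}}_{\alpha+1}$, $\partial_t(|u_j|^{\alpha-1}u_j)$ is bounded in a suitable negative-order space, and $A(\cdot,u_j,\nabla u_j)\rightharpoonup \chi$ weakly; a compactness argument of Aubin--Lions type (or the monotonicity/Minty trick, exactly as in the proof of Theorem \ref{thm:existence}) then upgrades this to strong $L^{\alpha+1}_{\mathrm{loc}}$ convergence of $u_j$ and a.e. convergence of $\nabla u_j$, giving $\chi = A(x,t,u,\nabla u)$ and allowing passage to the limit in the weak form against any fixed $\varphi\in C^\infty_{\textnormal o}(S_T)$ (whose support lies in some $(B_{R_j})_T$ for $j$ large).

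The remaining points are the membership $u\in\mathring U^{1,{\bf p}}_{\alpha+1}$, the continuity $u\in C([0,T];L^{\alpha+1}(\R^N))$, and the initial condition $u(t)\to u_0$ in $L^{\alpha+1}(\R^N)$. The first follows because $u$ is a weak limit of elements of the closed subspace $\mathring U^{1,{\bf p}}_{\alpha+1}$ (which, being norm-closed and convex, is weakly closed). For the continuity and the attainment of the initial datum I would first establish, uniformly in $j$, an estimate of the form $\int_{B_R}\big||u_j(\cdot,t)|^{\alpha-1}u_j(\cdot,t) - |u_0|^{\alpha-1}u_0\big|\,|u_j(\cdot,t)-u_0|\d x \le C t$ by testing with $u_j - u_0$ (suitably regularized); combined with the time-translation estimates this yields equicontinuity of $t\mapsto |u_j(\cdot,t)|^{\alpha-1}u_j(\cdot,t)$ into $L^1_{\mathrm{loc}}$, passes to the limit, and then a monotonicity-type inequality relating $|u|^{\alpha-1}u$ and $u$ promotes it to continuity in $L^{\alpha+1}$ and gives $u(0)=u_0$. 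The main obstacle I anticipate is the absence of global $L^1$ control on $\R^N$: the energy estimates only control $L^{\alpha+1}$ and the gradients, so the compactness step must be carried out locally in space and one must be careful that the limit genuinely lies in the global space $\mathring U^{1,{\bf p}}_{\alpha+1}$ rather than merely in a local version; the weak lower semicontinuity of the norm handles this, but it needs to be stated cleanly. A secondary technical nuisance is the low regularity of the time derivative when $\alpha<1$, where $|u|^{\alpha-1}u$ is only Hölder rather than Lipschitz in $u$ — here I would mirror whatever device (e.g. working with the variable $|u|^{\alpha-1}u$ and a convexity inequality) is used for the corresponding range in the proof of Theorem \ref{thm:existence}.
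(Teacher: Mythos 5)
Your overall architecture coincides with the paper's: exhaust $\R^N$ by balls, solve the Cauchy--Dirichlet problem with vanishing boundary data via Theorem \ref{thm:existence}, extend by zero, derive a uniform energy estimate, extract weak limits, obtain strong local convergence from time/space translation estimates, and identify the limit vector field by Minty's trick; the remark that $\mathring U^{1,{\bf p}}_{\alpha+1}$ is norm-closed and convex, hence weakly closed, is exactly the paper's argument for $u\in\mathring U^{1,{\bf p}}_{\alpha+1}$. The paper obtains the uniform energy estimate by invoking Lemma \ref{lem:initial-val} with $g=0$ rather than by a fresh Steklov-averaging computation, but this is the same estimate.

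The one step that would fail as written is your route to time continuity and to the initial condition. You propose to test the equation with $u_j-u_0$ (``suitably regularized''), but $u_0$ is only assumed to lie in $L^{\alpha+1}(\R^N)$, with no spatial Sobolev regularity, so $u_j-u_0$ is not an admissible test function and no time regularization repairs this. Moreover, equicontinuity of $t\mapsto|u_j(\cdot,t)|^{\alpha-1}u_j(\cdot,t)$ into $L^1_{\mathrm{loc}}$ would at best give continuity of the limit into $L^1_{\mathrm{loc}}$; promoting this to $C([0,T];L^{\alpha+1}(\R^N))$ requires control of tails in space, which your estimates do not supply. The paper instead proves $u\in C([0,T];L^{\alpha+1}(\R^N))$ by a dedicated lemma (Lemma \ref{lem:time-cont_RN}), testing the \emph{limit} equation with exponential time mollifications of $u$ itself --- admissible precisely because $u\in\mathring U^{1,{\bf p}}_{\alpha+1}$ --- and controlling the quantity $\b_\alpha$ uniformly in time; the initial condition is then recovered by testing both the limit equation and the approximate equations with $\mathcal{H}_\delta(t)\zeta(t)v(x)$, $v\in C^\infty_{\textnormal{o}}(\R^N)$, letting $\delta\to0$ and comparing the two resulting identities. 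A similar global-in-space ingredient (Lemma \ref{lem:Raviart-RN}) is needed to close the Minty argument, so the identification of $\mathcal{A}$ cannot be carried out purely locally either.
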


\subsection{Comparison principle and uniqueness}

With some additional assumptions on the vector field we can prove a comparison principle. However note that some assumptions on $A$ that were used to obtain existence can in fact be weakened or dropped completely.
To be precise, we assume that $A(x,u,\xi)$ is a Caratheodory vector field which has \textit{no explicit dependence on time}. We also assume the structure condition 
\begin{align}\label{cond:structure2-for-comp}
 |A_i(x,u, \xi)| &\leq \Lambda \big(\sum^N_{k=1} |\xi_k|^{p_k} + \tilde b(x) \big)^\frac{p_i-1}{p_i},\hspace{5mm}i \in \{1,\dots, N\},
\end{align}
where $\tilde b \in L^1(\Omega)$. This is a version \eqref{cond:structure2} adapted for time-independent vector fields. Note however, that no variant of \eqref{cond:structure1} is needed. Furthermore, we need to assume that the components of the vector field satisfy the following type of Lipschitz-continuity condition w.r.t. $u$:
\begin{align}\label{cond:lip-cont}
 |A_i(x,u_1, \xi) - A_i(x, u_2, \xi)| \leq C|u_1 - u_2|\Big(\tilde c(x) + \sum^N_{k=1} |\xi_k|^{p_k}\Big)^\frac{p_i-1}{p_i},
\end{align}
for some $\tilde c \in L^1(\Omega)$. Instead of the strict monotonicity condition \eqref{cond:strict-monot} it is sufficient to work with the following weaker variant:
\begin{align}\label{cond:monot}
 (A(x,u,\xi) - A(x,u,\eta)\cdot (\xi - \eta) \geq 0.
\end{align}
We also need to assume that the right-hand side is independent of time. 
The comparison principle which we prove below is the following.
\begin{theo}\label{thm:comparison}
 Let $A(x,u,\xi)$ be a vector field satisfying the assumptions \eqref{cond:structure2-for-comp}, \eqref{cond:lip-cont} and \eqref{cond:monot}. Let $w$ be a solution to the Cauchy-Dirichlet problem with initial data $w_0$, nonnegative boundary data $g$ and right-hand side $f_2\in L^\frac{\alpha+1}{\alpha}(\Omega)$. Let $v$ be a solution to the Cauchy-Dirichlet problem with initial data $v_0 \leq w_0$, vanishing boundary data, i.e. $v \in L^{\bf p}(0,T; \overline W^{1, {\bf p}}_{\textnormal{o}}(\Omega))$, and with right-hand side $f_1\in L^\frac{\alpha+1}{\alpha}(\Omega)$, $f_1 \leq f_2$. Then $v\leq w$ on $\Omega_T$. 
\end{theo}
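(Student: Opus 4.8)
The aim is to prove $(v-w)_+=0$ a.e.\ in $\Omega_T$, equivalently that $\int_\Omega(v(\tau)-w(\tau))_+\,\d x=0$ for a.e.\ $\tau\in(0,T)$. Write $b(s):=|s|^{\alpha-1}s$. The plan is to test the difference of the weak formulations for $v$ and $w$ with the truncation $\varphi_\varepsilon:=\min\{(v-w)_+,\varepsilon\}$ and then send $\varepsilon\to0$. Two preliminaries are needed. First, a solution in the sense of Definition~\ref{def:weaksol} only admits test functions in $C^\infty_{\textnormal{o}}(\Omega_T)$ and has no pointwise time derivative, so I would first record the usual reformulation, valid for Lipschitz test functions $\varphi$ with $\varphi(\cdot,0)=0$: by a mollification in time together with the continuity $u\in C([0,T];L^{\alpha+1}(\Omega))$, the equation keeps the same shape on $\Omega_\tau:=\Omega\times(0,\tau)$ apart from a terminal term $\int_\Omega b(u(\tau))\,\varphi(\cdot,\tau)\,\d x$. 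Second, one must check $\varphi_\varepsilon(\cdot,t)\in\overline W^{1,{\bf p}}_{\textnormal{o}}(\Omega)$ for a.e.\ $t$: this is precisely where the hypotheses enter, since $v$ has vanishing lateral boundary values and $g\ge0$, so $v-w\le0$ on $\partial\Omega\times(0,T)$ and hence $(v-w)_+$ and its truncation vanish there; moreover $\varphi_\varepsilon$ is bounded, hence lies in $L^{\alpha+1}(\Omega_T)$, and $\varphi_\varepsilon(\cdot,0)=0$ because $v_0\le w_0$.

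Subtracting the two reformulations and inserting $\varphi_\varepsilon$, the elliptic contribution is split via $A(x,v,\nabla v)-A(x,w,\nabla w)=\big(A(x,v,\nabla v)-A(x,v,\nabla w)\big)+\big(A(x,v,\nabla w)-A(x,w,\nabla w)\big)$. Since $\nabla\varphi_\varepsilon=\mathbf{1}_{\{0<v-w<\varepsilon\}}(\nabla v-\nabla w)$, the first bracket paired with $\nabla\varphi_\varepsilon$ is $\ge0$ by the monotonicity \eqref{cond:monot} and is kept on the favourable side. For the second bracket, \eqref{cond:lip-cont} gives $|A_i(x,v,\nabla w)-A_i(x,w,\nabla w)|\le C|v-w|\big(\textstyle\tilde c+\sum_k|\partial_k w|^{p_k}\big)^{(p_i-1)/p_i}$, and because $|v-w|<\varepsilon$ on $\{0<v-w<\varepsilon\}$, Hölder's inequality with exponents $p_i$ and $p_i/(p_i-1)$ bounds its absolute contribution (integrals over $\Omega_\tau$) by
\[
 C\varepsilon\sum_i\Big(\iint_{\{0<v-w<\varepsilon\}}\big(\textstyle\tilde c+\sum_k|\partial_k w|^{p_k}\big)\Big)^{\frac{p_i-1}{p_i}}\Big(\iint_{\{0<v-w<\varepsilon\}}|\partial_i(v-w)|^{p_i}\Big)^{\frac1{p_i}}.
\]
Since $\tilde c+\sum_k|\partial_k w|^{p_k}\in L^1(\Omega_T)$ and $\partial_i(v-w)\in L^{p_i}(\Omega_T)$ while the sets $\{0<v-w<\varepsilon\}$ shrink to a null set as $\varepsilon\to0$, both factors tend to $0$ by dominated convergence, uniformly in $\tau$; hence the whole term is $o(\varepsilon)$. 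Finally $\iint_{\Omega_\tau}(f_1-f_2)\varphi_\varepsilon\le0$ because $f_1\le f_2$ and $\varphi_\varepsilon\ge0$.

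It remains to treat the time term, which is the heart of the matter. Using the time mollification in the difference equation and passing to the limit in the mollification parameter — here the monotonicity of $b$ enters through a Minty-type argument, following \cite{BoeDuGiaLiSche} — the combination of $-\iint_{\Omega_\tau}(b(v)-b(w))\,\partial_t\varphi_\varepsilon$ with the terminal term $\int_\Omega(b(v)-b(w))\,\varphi_\varepsilon\big|_{t=\tau}\,\d x$ is bounded below by $\int_\Omega\mathfrak b_\varepsilon(v(\tau),w(\tau))\,\d x$, where $\mathfrak b_\varepsilon(a,c):=\int_c^a\min\{(s-c)_+,\varepsilon\}\,b'(s)\,\d s\ge0$; note $\mathfrak b_\varepsilon(a,c)=0$ for $a\le c$, the initial contribution vanishes since $v_0\le w_0$, and $\varepsilon^{-1}\mathfrak b_\varepsilon(a,c)\to(b(a)-b(c))_+$ as $\varepsilon\to0$. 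Combining the lower bound on the time term with the elliptic and right-hand side estimates, $\int_\Omega\mathfrak b_\varepsilon(v(\tau),w(\tau))\,\d x\le o(\varepsilon)$ for a.e.\ $\tau$; dividing by $\varepsilon$ and applying Fatou's lemma as $\varepsilon\to0$ yields $\int_\Omega\big(b(v(\tau))-b(w(\tau))\big)_+\,\d x\le0$, so $b(v(\tau))\le b(w(\tau))$, i.e.\ $v(\tau)\le w(\tau)$, a.e.\ in $\Omega$ for a.e.\ $\tau$. Hence $v\le w$ on $\Omega_T$.

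The main obstacle is the time term under the doubly nonlinear structure: weak solutions carry no time derivative, and $\partial_t(|u|^{\alpha-1}u)$ pairs naturally with functions of $|u|^{\alpha-1}u$ rather than of $u$ itself, while the monotonicity of $A$ forces the test function to be a function of $u$. Reconciling these requires the time-regularization machinery together with the monotonicity of $b$, and this is where the regimes $\alpha\in(0,1)$ and $\alpha\ge1$ may call for somewhat different bookkeeping, reflecting the convexity/concavity of $s\mapsto|s|^{\alpha-1}s$ on the half-lines; this is the part "similar to \cite{BoeDuGiaLiSche}". The feature that $v$ and $w$ need not be nonnegative on all of $\Omega$ is handled by propagating the comparison $v_0\le w_0$ of the initial data through the time term, so that $\mathfrak b_\varepsilon$ still vanishes at $t=0$, instead of invoking global nonnegativity. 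The remaining technical points — the extended weak formulation with a non-vanishing terminal test function, and the membership $\varphi_\varepsilon(\cdot,t)\in\overline W^{1,{\bf p}}_{\textnormal{o}}(\Omega)$ for possibly non-convex $\Omega$ — I expect to settle by routine arguments.
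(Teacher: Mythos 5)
Your elliptic-term and right-hand-side bookkeeping is sound and parallels the paper's, but the lower bound you claim for the time term is not correct as stated, and this is precisely the step the paper's proof resolves by a device your proposal does not set up: doubling of the time variable. Concretely, with $\sigma_\varepsilon(s)=\min\{s_+,\varepsilon\}$ and $b(s)=|s|^{\alpha-1}s$, you want $\mathfrak b_\varepsilon(a,c)=\int_c^a\sigma_\varepsilon(s-c)\,b'(s)\,\mathrm{d}s$ to act as a primitive of the (formal) integrand $\big[b'(v)\partial_t v-b'(w)\partial_t w\big]\sigma_\varepsilon(v-w)$. No such primitive exists: the $1$-form $b'(a)\sigma_\varepsilon(a-c)\,\mathrm{d}a-b'(c)\sigma_\varepsilon(a-c)\,\mathrm{d}c$ is not closed, since $\partial_c\big[b'(a)\sigma_\varepsilon(a-c)\big]=-b'(a)\sigma_\varepsilon'(a-c)$ while $\partial_a\big[-b'(c)\sigma_\varepsilon(a-c)\big]=-b'(c)\sigma_\varepsilon'(a-c)$, and these agree only where $b'(a)=b'(c)$. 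Computing directly (for $a>c$), $\partial_c\mathfrak b_\varepsilon(a,c)=-\big(b(\min\{a,c+\varepsilon\})-b(c)\big)$, so the mismatch between your target and $\partial_t\mathfrak b_\varepsilon(v,w)$ is
\[
\partial_t w\cdot\int_w^{\min\{v,w+\varepsilon\}}\big(b'(s)-b'(w)\big)\,\mathrm{d}s,
\]
which has no sign: the bracket changes sign with $\alpha\gtrless1$ and with the sign of $w$, and $\partial_t w$ is arbitrary. In the weak setting the situation is worse, since $\partial_t w$ does not even exist; the monotonicity-of-$b$ trick you invoke inside the time mollification, namely that $(b(v)-b([v]_h))(v-[v]_h)\ge 0$ supplies the favourable sign for the term produced by $\partial_t[v]_h$, only works when the other argument inside $\mathcal{H}_\delta(\cdot-\tilde v)$ is frozen in time. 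If you mollify $v-w$, the increment $\partial_t[v-w]_h$ couples $v$ and $w$ and the sign is lost.

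This is exactly why the paper proves Lemmas \ref{lem:complemma-v} and \ref{lem:complemma-w} and then doubles the time variable over $Q=\Omega\times(0,T)^2$: the $v$-equation is tested with $\mathcal{H}_\delta\big(v(\cdot,t_1)-w(\cdot,t_2)\big)\psi$ at fixed $t_2$, so $\tilde v=w(\cdot,t_2)$ is time-independent, the mollification trick applies to $v$ alone, and the resulting quantity $\mathfrak h_\delta(v,\tilde v)$ is hit only by the \emph{partial} derivative in $t_1$, which is the well-behaved component of $\nabla\mathfrak b_\varepsilon$; symmetrically for $w$ via $\widehat{\mathcal H}_\delta$ and $\widehat{\mathfrak h}_\delta$. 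Integrating each identity in the complementary time variable, adding, letting $\delta\to0$, and only then undoubling with $\psi(t_1,t_2)=\varepsilon^{-1}\varphi\big((t_1-t_2)/\varepsilon\big)\phi\big((t_1+t_2)/2\big)$ recovers a single-time inequality for $(b(v)-b(w))_+$. The reference \cite{BoeDuGiaLiSche} you appeal to for the "Minty-type argument" uses this same doubling device; it does not justify the direct route with the single-time test function $\min\{(v-w)_+,\varepsilon\}$. So the overall architecture of your proposal is reasonable and the elliptic estimate is fine, but the time-term lower bound needs to be reworked through the doubled-variable formulation rather than deferred as a routine step.
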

By symmetry, one can also compare a solution with negative values on the lateral boundary to a solution with vanishing boundary condition. As expected, the comparison principle gives rise to a uniqueness result. Note however, that since we need one of the functions to vanish on the boundary, we obtain the uniqueness result specifically for solutions vanishing on the lateral boundary.
\begin{theo}
Let $A(x,u,\xi)$ be a time independent vector field satisfying the structure conditions \eqref{cond:structure2-for-comp} and \eqref{cond:structure1} (with time-independent $\tilde a$). Suppose also that $A$ satisfies the Lipschitz continuity condition \eqref{cond:lip-cont} and the strict monotonicity condition \eqref{cond:strict-monot}. Let $f\in L^\frac{\alpha+1}{\alpha}(\Omega)$ and $u_0\in L^{\alpha+1}(\Omega)$. Then there is a unique solution to the Cauchy-Dirichlet problem 
\begin{align*}
   \left\{
\begin{array}{ll}
\partial_t \big( |u|^{\alpha -1} u\big)  - \nabla\cdot A(x,u,\nabla u) = f, & \quad \text{in } \Omega_T 
\\[5pt]
 u(x,0) = u_0(x),  & \quad x \in \Omega,
 \\
 u = 0,  & \quad \text{on } \partial \Omega \times (0,T).
\end{array}
\right.
 \end{align*}
\end{theo}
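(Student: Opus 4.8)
The plan is to derive both assertions from results already established: existence from Theorem~\ref{thm:existence} and uniqueness from Theorem~\ref{thm:comparison}. Essentially all of the analytic content is contained in those two theorems, so the task here is purely to check that the present hypotheses feed into them.

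For existence I would apply Theorem~\ref{thm:existence} with the choice of boundary datum $g\equiv 0$. This $g$ trivially satisfies $g\in L^\infty(\Omega_T)\cap L^{\bf p}(0,T;W^{1,{\bf p}}(\Omega))$ and $\partial_t g\in L^{\alpha+1}(\Omega_T)$; the field $A$ is Carathéodory; a time-independent $\tilde a\in L^1(\Omega)$ and the $\tilde b\in L^1(\Omega)$ of \eqref{cond:structure2-for-comp} define, when read as functions constant in time, elements of $L^1(\Omega_T)$, so \eqref{cond:structure1} and \eqref{cond:structure2} hold in the form needed by Theorem~\ref{thm:existence}; the strict monotonicity \eqref{cond:strict-monot} is assumed; and a time-independent $f\in L^{\frac{\alpha+1}{\alpha}}(\Omega)$ lies in $L^{\frac{\alpha+1}{\alpha}}(\Omega_T)$. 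Theorem~\ref{thm:existence} then yields $u\in L^{\bf p}(0,T;W^{1,{\bf p}}(\Omega))\cap C([0,T];L^{\alpha+1}(\Omega))$ with $u(t)\to u_0$ in $L^{\alpha+1}(\Omega)$ and $u\in g+L^{\bf p}(0,T;\overline W^{1,{\bf p}}_{\textnormal{o}}(\Omega))=L^{\bf p}(0,T;\overline W^{1,{\bf p}}_{\textnormal{o}}(\Omega))$, i.e.\ $u$ vanishes on $\partial\Omega\times(0,T)$ in the required sense; this settles existence.

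For uniqueness, suppose $u_1$ and $u_2$ are both solutions of the stated problem, each with vanishing boundary data. I would apply Theorem~\ref{thm:comparison} twice. Its hypotheses — time-independence of $A$ and of the right-hand side, the growth bound \eqref{cond:structure2-for-comp}, the Lipschitz condition \eqref{cond:lip-cont}, and the monotonicity \eqref{cond:monot} — all hold here, since \eqref{cond:strict-monot} implies \eqref{cond:monot}. Applying Theorem~\ref{thm:comparison} with $w:=u_1$ (whose boundary datum is $g\equiv0$, which is nonnegative), $v:=u_2$ (which lies in $L^{\bf p}(0,T;\overline W^{1,{\bf p}}_{\textnormal{o}}(\Omega))$), initial data $v_0=u_0\le w_0=u_0$, and right-hand sides $f_1=f\le f_2=f$, gives $u_2\le u_1$ on $\Omega_T$. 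Interchanging the roles of $u_1$ and $u_2$ gives $u_1\le u_2$, hence $u_1=u_2$.

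There is no genuine obstacle here: the present statement is a bookkeeping corollary of Theorems~\ref{thm:existence} and \ref{thm:comparison}. The only points requiring care are the compatibility of the two sets of structure assumptions (time-independent $L^1(\Omega)$ data viewed inside $L^1(\Omega_T)$, and \eqref{cond:strict-monot} being stronger than \eqref{cond:monot}) and the observation, emphasized right after Theorem~\ref{thm:comparison}, that the comparison principle needs neither positivity of the solutions themselves nor more than nonnegativity of the boundary datum of the larger one — so that taking $g\equiv0$ for $w$ is legitimate.
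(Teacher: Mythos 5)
Your proposal is correct and follows exactly the route the paper takes: existence is obtained from Theorem~\ref{thm:existence} with $g\equiv 0$, and uniqueness follows from applying Theorem~\ref{thm:comparison} twice with the roles of the two solutions interchanged. The paper's proof is only a two-line remark; you have simply spelled out the hypothesis-checking, and all of that checking is correct.
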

\begin{proof}{}
 Existence follows from Theorem \ref{thm:existence}. Uniqueness follows from Theorem \ref{thm:comparison}.
\end{proof}
An interesting question is under which circumstances one can can ensure that a solution to the Cauchy-Dirichlet problem is nonnegative. The following theorem provides some sufficient conditions.
\begin{theo}
 Let $A(x,u,\xi)$ be a vector field satisfying the assumptions of Theorem \ref{thm:comparison}. Suppose also that $A(x,0, \bar 0) = \bar 0$. Suppose that $u$ solves the problem 
 \begin{align*}
   \left\{
\begin{array}{ll}
\partial_t \big( |u|^{\alpha -1} u\big)  - \nabla\cdot A(x,u,\nabla u) = f, & \quad \text{in } \Omega_T 
\\[5pt]
 u(x,0) = u_0(x),  & \quad x \in \Omega,
 \\
 u = g,  & \quad \text{on } \partial \Omega.
\end{array}
\right.
 \end{align*}
where $u_0$, $f$ and $g$ are nonnegative. Then $u$ is nonnegative.
 \end{theo}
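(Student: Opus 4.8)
The plan is to deduce the statement directly from the comparison principle of Theorem~\ref{thm:comparison}, using the constant function $v\equiv 0$ as the solution with vanishing boundary data. The only point that genuinely needs checking is that $v\equiv 0$ is a weak solution of the Cauchy-Dirichlet problem in the sense of Definition~\ref{def:prob-CD}, and it is exactly here that the extra hypothesis $A(x,0,\bar 0)=\bar 0$ is used.

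First I would verify the three conditions of Definition~\ref{def:prob-CD} for $v\equiv 0$. Trivially $v\in L^{\bf p}(0,T;W^{1,{\bf p}}(\Omega))\cap C([0,T];L^{\alpha+1}(\Omega))$. Since $\nabla v=\bar 0$ and $A(x,0,\bar 0)=\bar 0$ for a.e.\ $x$, the weak formulation \eqref{eq:weak_form} with right-hand side $f_1\equiv 0$ reduces to the identity $0=0$ for every test function $\varphi\in C^\infty_{\textnormal{o}}(\Omega_T)$, so $v$ solves the PDE. The initial condition $v(t)\to 0$ in $L^{\alpha+1}(\Omega)$ as $t\to 0$ holds with $v_0=0$, and $v\in L^{\bf p}(0,T;\overline W^{1,{\bf p}}_{\textnormal{o}}(\Omega))$ because the zero function lies in that space; hence $v$ has vanishing boundary data.

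Next I would set $w:=u$. By hypothesis $w$ solves the Cauchy-Dirichlet problem with initial data $w_0:=u_0\ge 0$, nonnegative boundary data $g$, and right-hand side $f_2:=f\in L^{\frac{\alpha+1}{\alpha}}(\Omega_T)$ with $f\ge 0$. With the choices $v_0:=0$ and $f_1:=0$ we have $v_0=0\le u_0=w_0$ and $f_1=0\le f=f_2$. Since $A$ satisfies \eqref{cond:structure2-for-comp}, \eqref{cond:lip-cont} and \eqref{cond:monot} by the assumptions of Theorem~\ref{thm:comparison}, all hypotheses of that theorem are in force, and it yields $v\le w$ on $\Omega_T$, that is, $0\le u$ on $\Omega_T$, which is the assertion.

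I do not anticipate any real obstacle: the whole content of the argument is the bookkeeping required to recognize the zero function as an admissible competitor in Theorem~\ref{thm:comparison}, with $A(x,0,\bar 0)=\bar 0$ being precisely the condition that makes $0$ a solution of the homogeneous equation with zero data.
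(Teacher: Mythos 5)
Your proposal is correct and follows exactly the same route as the paper's one-line proof: apply Theorem~\ref{thm:comparison} with $v\equiv 0$, $f_1=0$, $w=u$, $f_2=f$. The additional bookkeeping you supply, checking via $A(x,0,\bar 0)=\bar 0$ that the zero function is an admissible solution with vanishing boundary data, is precisely the verification the paper leaves implicit.
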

\begin{proof}{}
 Apply Theorem \ref{thm:comparison} with $v=0$, $f_1 = 0$, and $w=u$, $f_2 = f$.
\end{proof}

\section{Preliminaries}
In this section we introduce some notation and various lemmas that will be used in the subsequent arguments.
\subsection{Algebraic quantities and estimates}
\begin{lem}\label{lem:alpha-est}
 For $u, v \in \R$ and $\alpha\in(0,1)$ we have
 \begin{align*}
  |u - v| \leq c_\alpha \big||u|^{\alpha-1}u - |v|^{\alpha-1}v\big| \big(|u|^{(1-\alpha)} + |v|^{(1-\alpha)}\big).
 \end{align*}
\end{lem}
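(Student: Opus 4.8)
The plan is to reduce to nonnegative arguments and then exploit elementary monotonicity of power functions. Writing $\phi(s):=|s|^{\alpha-1}s=\sgn(s)|s|^{\alpha}$, I would first note that $\phi$ is odd, so replacing $(u,v)$ by $(-u,-v)$ leaves each of the three quantities $|u-v|$, $\big|\phi(u)-\phi(v)\big|$, $|u|^{1-\alpha}+|v|^{1-\alpha}$ unchanged; combined with the symmetry of the asserted inequality under $u\leftrightarrow v$, this reduces everything to the two cases (i) $u>v\ge 0$ (the case $u=v$ being trivial) and (ii) $u>0>v$. I expect the constant $c_\alpha=1/\alpha$ to work in both.

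In case (i), since $\alpha<1$ the function $t\mapsto t^{\alpha-1}$ is nonincreasing on $(0,\infty)$, so I would estimate
\[
\phi(u)-\phi(v)=u^{\alpha}-v^{\alpha}=\alpha\int_v^u t^{\alpha-1}\,dt\;\ge\;\alpha(u-v)\,u^{\alpha-1}
\]
(the integral converging even when $v=0$ because $\alpha>0$). Multiplying through by $|u|^{1-\alpha}+|v|^{1-\alpha}\ge u^{1-\alpha}$ gives $\big(\phi(u)-\phi(v)\big)\big(|u|^{1-\alpha}+|v|^{1-\alpha}\big)\ge\alpha(u-v)=\alpha|u-v|$, which is the claim with $c_\alpha=1/\alpha$. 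In case (ii) one has $\phi(u)-\phi(v)=u^{\alpha}+|v|^{\alpha}$ and $|u-v|=u+|v|$, so expanding,
\[
\big(\phi(u)-\phi(v)\big)\big(|u|^{1-\alpha}+|v|^{1-\alpha}\big)=u+|v|+u^{\alpha}|v|^{1-\alpha}+u^{1-\alpha}|v|^{\alpha}\;\ge\;u+|v|=|u-v|,
\]
so here the inequality even holds with constant $1\le 1/\alpha$. Combining the two cases proves the lemma with $c_\alpha=1/\alpha$.

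There is no genuine obstacle; the only point needing care is the sign bookkeeping, which the reduction to the two cases disposes of. As a case-free alternative one could substitute $a=\phi(u)$, $b=\phi(v)$ and use that $\phi^{-1}(t)=\sgn(t)|t|^{1/\alpha}$ belongs to $C^1(\R)$ with derivative $\tfrac1\alpha|t|^{\frac{1-\alpha}{\alpha}}$: writing $\phi^{-1}(a)-\phi^{-1}(b)=(a-b)\int_0^1(\phi^{-1})'\big(b+s(a-b)\big)\,ds$, bounding $|b+s(a-b)|\le|a|+|b|$ and then $(|a|+|b|)^{\frac{1-\alpha}{\alpha}}\le C_\alpha\big(|a|^{\frac{1-\alpha}{\alpha}}+|b|^{\frac{1-\alpha}{\alpha}}\big)$, and finally undoing the substitution via $|\phi(u)|^{\frac{1-\alpha}{\alpha}}=|u|^{1-\alpha}$ — this yields the lemma with a (larger) explicit $c_\alpha$ and avoids any case distinction.
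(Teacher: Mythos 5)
Your case-based proof is correct and is genuinely different from the paper's argument, while your ``case-free alternative'' sketched at the end is essentially what the paper does. The paper takes the inverse-function route: it sets $f(t):=|t|^{\frac1\alpha-1}t$ (so $f(\phi(u))=u$), writes $|u-v|=\big|\int_0^1\frac{d}{ds}f\big(\phi(v)+s(\phi(u)-\phi(v))\big)\,ds\big|$, pulls out the factor $\frac1\alpha|\phi(u)-\phi(v)|$, and bounds $\big|s\phi(u)+(1-s)\phi(v)\big|^{\frac1\alpha-1}\le\big(|\phi(u)|+|\phi(v)|\big)^{\frac1\alpha-1}\le c_\alpha\big(|u|^{1-\alpha}+|v|^{1-\alpha}\big)$, exactly as you describe in your second paragraph. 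Your primary argument instead exploits the symmetry of the inequality under $u\leftrightarrow v$ and under $(u,v)\mapsto(-u,-v)$ to reduce to the two sign patterns $u>v\ge0$ and $u>0>v$, then in the first case uses monotonicity of $t\mapsto t^{\alpha-1}$ under the integral $\alpha\int_v^u t^{\alpha-1}\,dt$, and in the second expands the product directly. Both arguments are sound; the trade-off is that the paper's one-line FTC computation avoids any case bookkeeping and generalizes readily to the vector-valued or higher-dimensional analogues that appear later in the paper, whereas your case analysis is more elementary (no appeal to $f\in C^1$, no sub-additivity of $t\mapsto t^{\frac{1-\alpha}{\alpha}}$) and yields the sharper explicit constant $c_\alpha=1/\alpha$ rather than $2^{\frac1\alpha-1}/\alpha$.
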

\begin{proof}{} Using the fact that $f(t):= |t|^{\frac1\alpha-1}t$ is a $C^1$-function we have
 \begin{align*}
  |u - v| &= \big|f(|u|^{\alpha-1}u) - f(|v|^{\alpha-1}v) \big| = \Big| \int_0^1 \frac{d}{d s} f\big( |v|^{\alpha-1}v + s(|u|^{\alpha-1}u - |v|^{\alpha-1}v)\big) \d s\Big|
  \\
  &= \frac1\alpha \big||u|^{\alpha-1}u - |v|^{\alpha-1}v\big| \int_0^1 \big|s |u|^{\alpha-1}u + (1-s) |v|^{\alpha-1}v \big|^{\frac1\alpha - 1} \d s 
  \\
  &\leq c_\alpha \big||u|^{\alpha-1}u - |v|^{\alpha-1}v\big| \big(|u|^{(1-\alpha)} + |v|^{(1-\alpha)}\big).
 \end{align*}
\end{proof}
\begin{lem}[Lemme 1.1 of \cite{Raviart}, page 302]
    Let $\alpha \in (0,\infty)$. Then for $a, b \in \R$ we have
    \begin{equation}\label{mono-convex}
        ( |a|^{\alpha-1} a - |b|^{\alpha-1} b ) a \ge \frac{\alpha}{\alpha+1} ( |a|^{\alpha+1}-|b|^{\alpha+1}).   \end{equation}
\end{lem}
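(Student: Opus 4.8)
The plan is to reduce \eqref{mono-convex} to a single application of Young's inequality. The starting point is the elementary identity $a\,(|a|^{\alpha-1}a)=|a|^{\alpha+1}$, which rewrites the left-hand side as
\[
 (|a|^{\alpha-1}a-|b|^{\alpha-1}b)\,a = |a|^{\alpha+1} - a\,|b|^{\alpha-1}b .
\]
Hence it is enough to establish the bound $a\,|b|^{\alpha-1}b \le \frac{1}{\alpha+1}|a|^{\alpha+1}+\frac{\alpha}{\alpha+1}|b|^{\alpha+1}$ on the cross term; subtracting it from $|a|^{\alpha+1}$ leaves exactly $\frac{\alpha}{\alpha+1}(|a|^{\alpha+1}-|b|^{\alpha+1})$, which is the claimed right-hand side.

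To obtain the bound on the cross term, first estimate $a\,|b|^{\alpha-1}b \le |a|\,|b|^{\alpha}$ (the left side is at most its own absolute value). Then apply Young's inequality $xy\le \frac{x^q}{q}+\frac{y^{q'}}{q'}$ with the conjugate exponents $q=\alpha+1$ and $q'=\frac{\alpha+1}{\alpha}$, which are both larger than $1$ precisely because $\alpha>0$, choosing $x=|a|$ and $y=|b|^{\alpha}$, so that $y^{q'}=|b|^{\alpha\cdot\frac{\alpha+1}{\alpha}}=|b|^{\alpha+1}$. This yields the desired estimate and completes the proof.

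A slightly more structural way to see the same fact, which may be worth recording, is to introduce the function $\Phi(s):=\frac{\alpha}{\alpha+1}|s|^{\frac{\alpha+1}{\alpha}}$. Since the exponent $\frac{\alpha+1}{\alpha}=1+\frac1\alpha$ is $>1$, $\Phi$ is convex and $C^1$ on all of $\R$, with $\Phi'(s)=|s|^{\frac1\alpha-1}s$. A direct computation gives the two identities $\Phi(|a|^{\alpha-1}a)=\frac{\alpha}{\alpha+1}|a|^{\alpha+1}$ and $\Phi'(|a|^{\alpha-1}a)=a$, and then \eqref{mono-convex} is nothing but the tangent-line inequality $\Phi(y)\ge \Phi(x)+\Phi'(x)(y-x)$ for $\Phi$ applied at $x=|a|^{\alpha-1}a$ and evaluated at $y=|b|^{\alpha-1}b$, after rearranging. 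There is essentially no obstacle in either route; the only point deserving a word of care is the behaviour at the origin and the fact that, by working with absolute values throughout, the sign cases $ab\ge 0$ and $ab<0$ need not be separated, which is automatic since $\frac1\alpha-1>-1$.
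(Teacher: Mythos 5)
Your proof is correct. The paper itself offers no argument for this lemma --- it is simply quoted from Raviart --- so there is nothing to compare against directly; your Young's-inequality computation is a complete and valid substitute: the cross term $a|b|^{\alpha-1}b\le|a|\,|b|^\alpha\le\frac{1}{\alpha+1}|a|^{\alpha+1}+\frac{\alpha}{\alpha+1}|b|^{\alpha+1}$ with conjugate exponents $\alpha+1$ and $\frac{\alpha+1}{\alpha}$ is exactly what is needed, and your verification of $\Phi'(|a|^{\alpha-1}a)=a$ and $\Phi(|a|^{\alpha-1}a)=\frac{\alpha}{\alpha+1}|a|^{\alpha+1}$ in the second route checks out as well. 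It is worth observing that \eqref{mono-convex} is precisely the statement $\b_\alpha[b,a]\ge 0$ in the paper's later notation, and the paper proves that nonnegativity by the tangent-line inequality for the convex function $F(v)=\tfrac{1}{\alpha+1}|v|^{\alpha+1}$ evaluated in the variable $v$ itself; your second route is the same convexity idea transported to the variable $|v|^{\alpha-1}v$ via the conjugate function $\Phi$. The two convexity arguments are equivalent, while your first (Young's inequality) argument is the most elementary of the three and avoids any discussion of differentiability at the origin.
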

\noindent The following result was proved in \cite{CiaVeVe}.
\begin{lem}\label{lem:elementary_real}
 Let $\gamma > 1$. For all $a, b \in \R$ we have 
 \begin{align}\label{est:exponent_inside}
  |a-b|^\gamma \leq c\big||a|^{\gamma-1} a - |b|^{\gamma-1}b\big|.
 \end{align}
for a constant $c=c(\gamma)$. 
\end{lem}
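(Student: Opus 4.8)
The plan is to exploit two elementary symmetries of the asserted inequality and then distinguish cases according to the signs of $a$ and $b$. The left-hand side $|a-b|^\gamma$ is symmetric in $a,b$, and so is the right-hand side, since interchanging $a$ and $b$ only changes the sign of the quantity inside the absolute value; moreover both sides are invariant under $(a,b)\mapsto(-a,-b)$. Composing these two symmetries, it suffices to prove the inequality under the extra assumption that $a>b$ and $a>0$, the case $a=b$ being trivial since then both sides vanish. (Alternatively one could argue purely by homogeneity: both sides are positively homogeneous of degree $\gamma$, so one reduces to a compact set and invokes continuity together with the strict monotonicity of $t\mapsto |t|^{\gamma-1}t$; the direct argument below has the advantage of producing an explicit constant.)

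Suppose first that $a>b\geq 0$. Writing $a^\gamma-b^\gamma=\gamma\int_b^a s^{\gamma-1}\d s$ and using that $s\geq s-b\geq 0$ together with $\gamma-1>0$ to bound $s^{\gamma-1}\geq (s-b)^{\gamma-1}$ on $[b,a]$, one obtains
\[
 |a|^{\gamma-1}a-|b|^{\gamma-1}b=a^\gamma-b^\gamma\geq \gamma\int_b^a (s-b)^{\gamma-1}\d s=(a-b)^\gamma=|a-b|^\gamma,
\]
so the claim holds in this case with constant $1$. Suppose next that $a>0>b$. Then $|a|^{\gamma-1}a-|b|^{\gamma-1}b=a^\gamma+|b|^\gamma$ while $|a-b|=a+|b|$, so the inequality reduces to $(a+|b|)^\gamma\leq c\,(a^\gamma+|b|^\gamma)$, which is the standard consequence of the convexity of $t\mapsto t^\gamma$ on $[0,\infty)$ (valid since $\gamma\geq 1$), holding with $c=2^{\gamma-1}$. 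Taking $c=2^{\gamma-1}$, which is $\geq 1$ because $\gamma>1$, therefore covers both cases and completes the proof.

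I expect the only point requiring attention to be the sign bookkeeping in the reduction step, where one must check that the two invariances indeed allow the normalization $a>b$, $a>0$; there is no genuine analytic difficulty, since the delicate range is near the origin, and there the common homogeneity of degree $\gamma$ of the two sides renders the estimate scale-invariant.
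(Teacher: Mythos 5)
Your proof is correct. Note that the paper does not actually prove this lemma — it only cites \cite{CiaVeVe} for it — so your argument serves as a self-contained replacement. The symmetry reduction (swap and simultaneous negation both preserve the two sides, and their composition brings any pair with $a\neq b$ to the normal form $a>b$, $a>0$) is sound, the case $a>b\geq 0$ follows cleanly from $s^{\gamma-1}\geq (s-b)^{\gamma-1}$ under the integral, and the case $a>0>b$ correctly reduces to the convexity inequality $(x+y)^\gamma\leq 2^{\gamma-1}(x^\gamma+y^\gamma)$, yielding the explicit constant $c=2^{\gamma-1}$.
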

\noindent For $\alpha > 0$ and $v,w\in \R$ we define the quantity
\begin{align*}
 \b_\alpha[v,w] :=& \tfrac{\alpha}{\alpha+1} ( |v|^{\alpha+1} - |w|^{\alpha+1}) - w (|v|^{\alpha-1}v - |w|^{\alpha-1}w) 
 \\
 =& \tfrac{1}{\alpha+1}(|w|^{\alpha+1} - |v|^{\alpha+1}) + |v|^{\alpha-1}v(v-w).
\end{align*}
The second expression for $\b_\alpha[v,w]$ shows that the quantity is always nonnegative, since it may be expressed using the convex function $F(v) = \tfrac{1}{\alpha+1}|v|^{\alpha+1}$ as 
\begin{align*}
 \b_\alpha[v,w] ¨= F(w) - (F(v) + F'(v)(w-v)) \geq 0.
\end{align*}
In order to find suitable estimates for $\b_\alpha[v,w]$, we need to distinguish between the parameter ranges $\alpha \in (0,1)$ and $\alpha \geq 1$. The following lemma, however, is valid for all $\alpha > 0$.
\begin{lem}\label{lem:b-property-all-alpha}
 Let $\alpha > 0$. Then for all $v, w \in \R$,
 \begin{align}\label{b1}
  \big| |w|^{\frac{\alpha-1}{2}}w - |v|^{\frac{\alpha-1}{2}}v \big|^2 \leq c \b_\alpha[v,w], 
 \end{align}
 for some $c=c(\alpha)$, and
 \begin{align}\label{b5}
  \b_\alpha[v,w] \leq (|v|^{\alpha-1}v - |w|^{\alpha-1}w)(v - w).
 \end{align}
\end{lem}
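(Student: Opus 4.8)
\textbf{Proof plan for Lemma \ref{lem:b-property-all-alpha}.}

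The plan is to prove the two inequalities separately, in each case reducing the two-variable statement to a one-variable calculus fact about the convex function $F(v) = \tfrac{1}{\alpha+1}|v|^{\alpha+1}$ and its Bregman divergence $\b_\alpha[v,w] = F(w) - F(v) - F'(v)(w-v)$.

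For the upper bound \eqref{b5}, I would write the right-hand side symmetrically: $(|v|^{\alpha-1}v - |w|^{\alpha-1}w)(v-w) = (F'(v) - F'(w))(v-w)$, and observe that this equals $\b_\alpha[v,w] + \b_\alpha[w,v]$ by expanding both Bregman divergences. Since $\b_\alpha[w,v] \geq 0$ (as noted in the text, it is the remainder of the tangent-line approximation of the convex function $F$), inequality \eqref{b5} follows immediately. This step is essentially free once the algebraic identity $(F'(v)-F'(w))(v-w) = \b_\alpha[v,w] + \b_\alpha[w,v]$ is written down.

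For the lower bound \eqref{b1}, the substantive estimate, I would argue that the left-hand side $\big||w|^{\frac{\alpha-1}{2}}w - |v|^{\frac{\alpha-1}{2}}v\big|^2$ is controlled by the second-order behavior of $F$, and so is $\b_\alpha[v,w]$. Concretely, using the integral form $\b_\alpha[v,w] = \int_0^1 (1-s)\, F''(v + s(w-v))\,(w-v)^2\,\d s$ with $F''(t) = \alpha|t|^{\alpha-1}$, and similarly writing $|w|^{\frac{\alpha-1}{2}}w - |v|^{\frac{\alpha-1}{2}}v = \tfrac{\alpha+1}{2}\int_0^1 |v+s(w-v)|^{\frac{\alpha-1}{2}}\,\d s\,(w-v)$, one reduces the claim to the inequality
\begin{align*}
 \Big(\int_0^1 |v+s(w-v)|^{\frac{\alpha-1}{2}}\,\d s\Big)^2 \leq c(\alpha)\int_0^1 (1-s)\,|v+s(w-v)|^{\alpha-1}\,\d s,
\end{align*}
after pulling out the common factor $(w-v)^2$. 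By homogeneity (scaling $v,w$) and symmetry in $v \leftrightarrow w$, this can be normalized, e.g. to $\max(|v|,|w|)=1$, and then reduces to a compactness/continuity argument on the line segment, or alternatively one can invoke the known elementary inequalities of the type in Lemma \ref{lem:elementary_real} and Lemma \ref{lem:alpha-est} relating $|a-b|$, $|a|^{\gamma-1}a - |b|^{\gamma-1}b$ and powers of $|a|,|b|$ with exponent pairs chosen so that the product of the two relevant powers equals $(\alpha+1)/2$ on each side.

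The main obstacle is \eqref{b1}: one must handle the case where $v$ and $w$ have opposite signs (so the segment passes through $0$, where $F''$ vanishes when $\alpha<1$ and $|v+s(w-v)|^{\frac{\alpha-1}{2}}$ blows up), and make sure the constant $c(\alpha)$ stays finite across both regimes $\alpha\in(0,1)$ and $\alpha\geq 1$. I would dispose of this by splitting into the cases $\sgn v = \sgn w$ and $\sgn v \neq \sgn w$: in the same-sign case the segment stays in a region where $|t|^{\alpha-1}$ is comparable to a constant times $\max(|v|,|w|)^{\alpha-1}$ up to bounded factors, while in the opposite-sign case one bounds the left side using $\big||w|^{\frac{\alpha-1}{2}}w - |v|^{\frac{\alpha-1}{2}}v\big| \leq |w|^{\frac{\alpha+1}{2}} + |v|^{\frac{\alpha+1}{2}}$ and bounds $\b_\alpha[v,w]$ from below by $\tfrac{1}{\alpha+1}(\max(|v|,|w|))^{\alpha+1}$ minus a controlled term, again reducing to elementary one-variable estimates. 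Throughout, \eqref{b5} and the nonnegativity of $\b_\alpha$ will be the only structural facts needed.
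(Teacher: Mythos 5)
Your treatment of \eqref{b5} is exactly the paper's: both observe that $(|v|^{\alpha-1}v-|w|^{\alpha-1}w)(v-w) - \b_\alpha[v,w] = \b_\alpha[w,v] \geq 0$ and drop the nonnegative remainder.

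For \eqref{b1} you take a genuinely different route. The paper outsources the estimate: for $\alpha\in(0,1)$ it cites Lemma 2.3(i) of \cite{BoeDuKoSc} under the substitution $m=1/\alpha$, $u=|v|^{\alpha-1}v$, $a=|w|^{\alpha-1}w$, and for $\alpha\geq 1$ it asserts that the same argument from that reference carries over. You instead propose a self-contained proof via the Taylor integral remainder $\b_\alpha[v,w]=\alpha(w-v)^2\int_0^1(1-s)\,|v+s(w-v)|^{\alpha-1}\,\d s$, the analogous representation of $|w|^{(\alpha-1)/2}w-|v|^{(\alpha-1)/2}v$, homogeneity of degree $\alpha+1$, and compactness. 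This is a viable and arguably more transparent strategy, with the advantage of treating both ranges of $\alpha$ uniformly rather than leaning on an external lemma. Two details in the sketch should be corrected. First, after rescaling so that $\max(|v|,|w|)=1$ and replacing $(v,w)$ by $(-v,-w)$ if needed (both sides of \eqref{b1} are invariant under this sign flip), you cannot invoke ``symmetry in $v\leftrightarrow w$'' to further reduce to a single case: the right-hand side $\b_\alpha[v,w]$ is \emph{not} symmetric in its arguments, so the normalizations $w=1,\ v\in[-1,1]$ and $v=1,\ w\in[-1,1]$ must both be checked. In each case the ratio of the two sides extends continuously across the removable singularity at $v=w$ (both vanish to second order with limiting ratio $(\alpha+1)^2/(2\alpha)$) and compactness of $[-1,1]$ gives the uniform constant. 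Second, the parenthetical remark that ``$F''$ vanishes when $\alpha<1$'' at the origin is reversed: $F''(t)=\alpha|t|^{\alpha-1}$ blows up at $t=0$ when $\alpha<1$ and vanishes when $\alpha>1$, matching the behavior of $|t|^{(\alpha-1)/2}$ in both regimes; this does not affect your strategy but the case discussion should reflect it.
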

\begin{proof}{}
 In the case that $\alpha \in (0,1)$ the estimate \eqref{b1} follows from property (i) of the quantity $\b$ appearing in \cite[Lemma 2.3]{BoeDuKoSc}, with the choice $m:=\tfrac1\alpha > 1$, $u= |v|^{\alpha-1}v$ and $a = |w|^{\alpha-1} w$. In the case that $\alpha \geq 1$ we can use the same argument  as in the proof of property (i) in \cite[Lemma 2.3]{BoeDuKoSc}. The estimate \ref{b5} is true since a direct calculation shows that 
\begin{align}\label{bvw-bwv}
(|v|^{\alpha-1}v - |w|^{\alpha-1}w)(v - w) - \b_\alpha[v,w] = \b_\alpha[w,v] \geq 0.
\end{align}
\end{proof}
The next Lemma contains some useful estimates for $\b_\alpha[v,w]$ in the case $\alpha \in (0,1)$. 
\begin{lem}\label{lem:b_properties}
Let $v,w \in \R$ and $\alpha \in(0,1)$. Then there exists a constant $c$ depending only on $\alpha$ such that:
\begin{enumerate}[(i)]
\item\label{b2} $\tfrac 1 c (|w|+|v|)^{\alpha-1}|w - v|^2 \leq  \b_\alpha[v,w] \leq c (|w|+|v|)^{\alpha-1} |w - v|^2 $, 
\item\label{b3} $\b_\alpha[v,w] \leq c |v - w|^{1+\alpha}$.
\item\label{b4} $\big||v|^{\alpha-1}v - |w|^{\alpha-1}w\big|^\frac{\alpha+1}{\alpha} \leq c \b_\alpha[v,w]$,
\end{enumerate}
\end{lem}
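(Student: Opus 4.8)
The plan is to derive all three estimates from a single representation of $\b_\alpha[v,w]$ as a Taylor remainder. Set $m:=|v|+|w|$ and write $F(s):=\tfrac{1}{\alpha+1}|s|^{\alpha+1}$, so that $\b_\alpha[v,w]=F(w)-F(v)-F'(v)(w-v)$ is the Bregman divergence of $F$. Since $F'(s)=|s|^{\alpha-1}s$ is locally absolutely continuous, with weak derivative $F''(s)=\alpha|s|^{\alpha-1}\in L^1_{\mathrm{loc}}$ (because $\alpha-1>-1$), the integral form of the second-order Taylor remainder yields
\begin{equation*}
 \b_\alpha[v,w] = \alpha\,|v-w|^2\int_0^1 (1-t)\,\bigl|v+t(w-v)\bigr|^{\alpha-1}\d t .
\end{equation*}
Everything then follows from this identity together with the elementary facts $\bigl|v+t(w-v)\bigr|\le m$ and $|v-w|\le m$, plus the substitution $s=v+t(w-v)$, which (after discarding $1-t\le1$) gives $\b_\alpha[v,w]\le\alpha\,|v-w|\int_I|s|^{\alpha-1}\d s$, where $I$ is the closed interval with endpoints $v$ and $w$, of length $|v-w|$. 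We may assume $v\ne w$, the case $v=w$ being trivial.

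For part (i), the lower bound is immediate: since $\alpha-1<0$, $\bigl|v+t(w-v)\bigr|^{\alpha-1}\ge m^{\alpha-1}$, and $\int_0^1(1-t)\d t=\tfrac12$, so $\b_\alpha[v,w]\ge\tfrac\alpha2\,m^{\alpha-1}|v-w|^2$. For the upper bound I would distinguish two regimes. If $|v-w|\le\tfrac14 m$, then each $s\in I$ satisfies $|s|\ge\max(|v|,|w|)-|v-w|\ge\tfrac12 m-\tfrac14 m=\tfrac14 m$ (as $|s-v|,|s-w|\le|v-w|$ and $\max(|v|,|w|)\ge\tfrac12 m$), hence $\int_I|s|^{\alpha-1}\d s\le(\tfrac14 m)^{\alpha-1}|v-w|\le 4\,m^{\alpha-1}|v-w|$ (using $4^{1-\alpha}\le4$), and so $\b_\alpha[v,w]\le 4\alpha\,m^{\alpha-1}|v-w|^2$. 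If $|v-w|>\tfrac14 m$, then $I\subset[-m,m]$, so $\int_I|s|^{\alpha-1}\d s\le\int_{-m}^{m}|s|^{\alpha-1}\d s=\tfrac2\alpha m^{\alpha}$, giving $\b_\alpha[v,w]\le 2\,m^{\alpha}|v-w|=2\,m^{\alpha-1}\,m\,|v-w|\le 8\,m^{\alpha-1}|v-w|^2$. This establishes part (i). Part (ii) then follows at once: $|v-w|\le m$ and $\alpha-1<0$ give $m^{\alpha-1}\le|v-w|^{\alpha-1}$, hence $\b_\alpha[v,w]\le c\,m^{\alpha-1}|v-w|^2\le c\,|v-w|^{\alpha+1}$.

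For part (iii) I would first prove the pointwise bound $\bigl||v|^{\alpha-1}v-|w|^{\alpha-1}w\bigr|\le c\,m^{\alpha-1}|v-w|$ by the same dichotomy, applied to the (locally absolutely continuous, increasing) map $\psi(s)=|s|^{\alpha-1}s$: if $|v-w|\le\tfrac14 m$ then $\bigl||v|^{\alpha-1}v-|w|^{\alpha-1}w\bigr|=\alpha\int_I|s|^{\alpha-1}\d s\le 4\alpha\,m^{\alpha-1}|v-w|$ exactly as above, while if $|v-w|>\tfrac14 m$ then $\bigl||v|^{\alpha-1}v-|w|^{\alpha-1}w\bigr|\le|v|^{\alpha}+|w|^{\alpha}\le 2m^{\alpha}\le 8\,m^{\alpha-1}|v-w|$. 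Raising this to the power $\tfrac{\alpha+1}{\alpha}$ and inserting the lower bound of part (i) in the form $m^{\alpha-1}|v-w|^2\le c\,\b_\alpha[v,w]$, part (iii) reduces to the algebraic inequality
\begin{equation*}
 m^{(\alpha-1)\frac{\alpha+1}{\alpha}}\,|v-w|^{\frac{\alpha+1}{\alpha}}\le c\,m^{\alpha-1}|v-w|^2 ,
\end{equation*}
which after cancelling common powers is $m^{\frac{\alpha-1}{\alpha}}\le|v-w|^{\frac{\alpha-1}{\alpha}}$ — true because $|v-w|\le m$ and the exponent $\tfrac{\alpha-1}{\alpha}$ is negative.

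The main obstacle is the upper bound in part (i), equivalently the pointwise estimate for $|v|^{\alpha-1}v-|w|^{\alpha-1}w$ used in part (iii): since $F''(s)=\alpha|s|^{\alpha-1}$ blows up at the origin the integrand cannot be bounded uniformly, and the dichotomy according to whether the segment from $v$ to $w$ stays a fixed fraction of $|v|+|w|$ away from $0$ is what drives the proof; the rest is bookkeeping with the sign of $\alpha-1$ and the inequality $|v-w|\le|v|+|w|$. Alternatively, for $\alpha\in(0,1)$ these are exactly property (i) of the quantity $\b$ in \cite[Lemma 2.3]{BoeDuKoSc} with $m=\tfrac1\alpha>1$, transported through the Legendre conjugate $F^{\ast}(s)=\tfrac{\alpha}{\alpha+1}|s|^{\frac{\alpha+1}{\alpha}}$ via $\b_\alpha[v,w]=\b_{1/\alpha}\bigl[|w|^{\alpha-1}w,\,|v|^{\alpha-1}v\bigr]$, but the self-contained computation above seems cleaner.
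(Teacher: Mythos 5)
Your proof is correct, but it takes a genuinely different route from the paper. The paper disposes of \ref{b2} and \ref{b3} by citing properties (ii) and (iii) of the related quantity $\b$ in \cite[Lemma 2.3]{BoeDuKoSc} (with $m=\tfrac1\alpha$, $u=|v|^{\alpha-1}v$, $a=|w|^{\alpha-1}w$), and obtains \ref{b4} by combining the already-established estimate \eqref{b1} of Lemma \ref{lem:b-property-all-alpha} with Lemma \ref{lem:elementary_real} applied to $\gamma=(\alpha+1)/(2\alpha)>1$, namely $\big||v|^{\alpha-1}v-|w|^{\alpha-1}w\big|^{\frac{\alpha+1}{\alpha}}\le c\big||v|^{\frac{\alpha-1}{2}}v-|w|^{\frac{\alpha-1}{2}}w\big|^2\le c\,\b_\alpha[v,w]$. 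You instead give a self-contained argument from the integral Taylor-remainder representation $\b_\alpha[v,w]=\alpha|v-w|^2\int_0^1(1-t)|v+t(w-v)|^{\alpha-1}\d t$ (valid since $F'(s)=|s|^{\alpha-1}s$ is locally absolutely continuous with $F''\in L^1_{\mathrm{loc}}$), handling the singularity of $|s|^{\alpha-1}$ at the origin via the dichotomy $|v-w|\lessgtr\tfrac14(|v|+|w|)$; I checked the exponent bookkeeping in all three parts (in particular the reduction of \ref{b4} to $m^{\frac{\alpha-1}{\alpha}}\le|v-w|^{\frac{\alpha-1}{\alpha}}$) and it is sound, including the degenerate case $v=w$. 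What your version buys is independence from the external reference and from the two preliminary lemmas; what the paper's version buys is brevity and reuse of estimates it needs elsewhere anyway. You even note the dictionary $\b_\alpha[v,w]=\b_{1/\alpha}[|w|^{\alpha-1}w,|v|^{\alpha-1}v]$ that makes the citation work, so the two proofs are consistent with each other.
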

\begin{proof}{}
The properties \ref{b2} and \ref{b3} can be obtained from properties (ii) and (iii) for the related quantity $\b$ which were proved in \cite[Lemma 2.3]{BoeDuKoSc} with the choice $m:=\tfrac1\alpha > 1$, $u= |v|^{\alpha-1}v$ and $a = |w|^{\alpha-1} w$. Property \ref{b4} is obtained by combining \ref{b1} with the estimate
\begin{align*}
||v|^{\alpha -1}v - |w|^{\alpha -1}w|^\frac{\alpha+1}{\alpha} = \big(||v|^{\alpha -1}v - |w|^{\alpha -1}w|^{\frac{(\alpha + 1)}{2\alpha}}\big)^2 \leq c\big||v|^\frac{\alpha-1}{2}v - |w|^\frac{\alpha-1}{2}w |^2,
\end{align*}
where we have used Lemma \ref{lem:elementary_real} and the act that $\gamma:= (\alpha+1)/(2\alpha)> 1$ for $\alpha\in(0,1)$. 
\end{proof}
Finally, we have the following useful estimates in the case $\alpha \geq 1$.
\begin{lem}\label{lem:b_alpha_geq1}
 Let $\alpha \geq 1$. Then for all $v,w\in \R$,
  \begin{enumerate}[(i)]
  \item\label{b7} $\b_\alpha[v,w] \leq c\big||v|^{\alpha-1}v - |w|^{\alpha-1}w\big|^\frac{\alpha+1}{\alpha}$,
  \item\label{b8} $|v - w|^{\alpha+1} \leq c \b_\alpha[v,w]$.
  \item\label{alpha-geq1-stuff} $|v - w|^{\alpha+1} \leq c (|v|^{\alpha-1}v - |w|^{\alpha-1}w)(v-w)$.
 \end{enumerate}
\end{lem}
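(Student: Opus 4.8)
The plan is to dispatch \ref{b8} and \ref{alpha-geq1-stuff} by short reductions to lemmas already proved, and to concentrate on \ref{b7}, the only one of the three requiring a genuinely new argument.

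For \ref{b8} I would set $\gamma := \tfrac{\alpha+1}{2}$, so that $\gamma \geq 1$ since $\alpha \geq 1$, and observe that $|t|^{\frac{\alpha-1}{2}}t = |t|^{\gamma-1}t$. When $\alpha > 1$ we have $\gamma > 1$, so Lemma \ref{lem:elementary_real} gives $|v-w|^{\gamma} \leq c\,\big||v|^{\gamma-1}v - |w|^{\gamma-1}w\big|$; squaring this and using $2\gamma = \alpha+1$ yields $|v-w|^{\alpha+1}\leq c\,\big||v|^{\frac{\alpha-1}{2}}v - |w|^{\frac{\alpha-1}{2}}w\big|^2$, and \ref{b1} then closes the estimate. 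The borderline value $\alpha=1$ is covered separately by the identity $\b_1[v,w]=\tfrac12(v-w)^2$. Estimate \ref{alpha-geq1-stuff} follows immediately by chaining \ref{b8} with \ref{b5}: $|v-w|^{\alpha+1}\leq c\,\b_\alpha[v,w]\leq c\,(|v|^{\alpha-1}v - |w|^{\alpha-1}w)(v-w)$.

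The substantive estimate is \ref{b7}, and here I would pass to the \emph{dual} variables $a := |v|^{\alpha-1}v$ and $b:=|w|^{\alpha-1}w$, for which $v=|a|^{\frac1\alpha-1}a$, $w=|b|^{\frac1\alpha-1}b$ and $|v|^{\alpha+1}=|a|^{\frac{\alpha+1}{\alpha}}$. Substituting into the definition of $\b_\alpha$ one checks that
\begin{align*}
 \b_\alpha[v,w] = G(a) - G(b) - G'(b)(a-b), \qquad G(s):=\tfrac{\alpha}{\alpha+1}|s|^{q},\quad q:=\tfrac{\alpha+1}{\alpha}\in(1,2],
\end{align*}
so that $\b_\alpha[v,w]$ is the Bregman divergence of the convex function $G$, with $G'(s)=|s|^{\frac1\alpha-1}s=|s|^{q-2}s$. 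Writing this divergence as $\int_0^1\big(G'(b+\theta(a-b))-G'(b)\big)(a-b)\,\d\theta$ and invoking the $(q-1)$-H\"older continuity $\big||s|^{q-2}s-|t|^{q-2}t\big|\leq c\,|s-t|^{q-1}$ --- which holds precisely because $q-1\in(0,1]$ when $\alpha\geq1$ --- the integrand is bounded by $c\,\theta^{q-1}|a-b|^{q}$, so that $\b_\alpha[v,w]\leq c\,|a-b|^{q}=c\,\big||v|^{\alpha-1}v-|w|^{\alpha-1}w\big|^{\frac{\alpha+1}{\alpha}}$, which is \ref{b7}. This is the mirror image of the argument behind property \ref{b4} in the complementary range $\alpha\in(0,1)$; one could alternatively adapt the proof of property (i) in \cite[Lemma 2.3]{BoeDuKoSc}, as was done for \ref{b1}.

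I expect the only point deserving care to be the H\"older bound for $s\mapsto|s|^{q-2}s$ with $1<q\leq2$: when $s$ and $t$ have opposite signs one reduces to the elementary inequality $x^{r}+y^{r}\leq 2^{1-r}(x+y)^{r}$ for $x,y\geq0$ and $r=q-1\in(0,1]$. This is elementary, so there is no substantial obstacle --- the lemma is essentially a repackaging of earlier estimates together with one convexity inequality.
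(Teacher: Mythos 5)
Your argument is correct for all three assertions, and the route differs from the paper's in each case; for \ref{b8} yours is in fact the more careful one. For \ref{b7}, the paper does not pass to dual variables: it simply combines \eqref{b5}, i.e.\ $\b_\alpha[v,w]\le\big||v|^{\alpha-1}v-|w|^{\alpha-1}w\big|\,|v-w|$, with Lemma \ref{lem:elementary_real} for $\gamma=\alpha$ (so $|v-w|\le c\,\big||v|^{\alpha-1}v-|w|^{\alpha-1}w\big|^{1/\alpha}$) and multiplies; your Bregman-divergence rewrite with $G(s)=\tfrac{\alpha}{\alpha+1}|s|^q$, $q=\tfrac{\alpha+1}{\alpha}$, and the $(q-1)$-H\"older continuity of $s\mapsto|s|^{q-2}s$ yields the same bound and makes the duality with \ref{b4} transparent, at the cost of a longer computation. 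For \ref{b8}, the paper applies Lemma \ref{lem:elementary_real} with $\gamma=\alpha$ and the identity \eqref{bvw-bwv} to reach $|v-w|^{\alpha+1}\le c\,(\b_\alpha[v,w]+\b_\alpha[w,v])$ and then asserts $\le c\,\b_\alpha[v,w]$ ``since $\b_\alpha[w,v]$ is nonnegative''; as printed that last step goes the wrong way --- dropping a nonnegative term gives a lower, not an upper, bound, and one would instead need a comparison of the form $\b_\alpha[w,v]\le C(\alpha)\,\b_\alpha[v,w]$, which is not established there. Your route (Lemma \ref{lem:elementary_real} with $\gamma=\tfrac{\alpha+1}{2}$, squaring, then \eqref{b1}, with $\alpha=1$ handled separately via $\b_1[v,w]=\tfrac12(v-w)^2$) sidesteps this entirely and is the argument that should be recorded. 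For \ref{alpha-geq1-stuff}, the paper argues directly from Lemma \ref{lem:elementary_real} with $\gamma=\alpha$, whereas you chain \ref{b8} with \eqref{b5}; both are fine, though the paper's version has the small merit of being independent of \ref{b8}.
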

\begin{proof}{}
 To prove \ref{b7}, combine \eqref{b5} and \eqref{est:exponent_inside} and we have 
 \begin{align*}
  \b_\alpha[v,w] \leq \big||v|^{\alpha-1}v - |w|^{\alpha-1}w\big| |v - w|^{\alpha \frac1\alpha} \leq c \big||v|^{\alpha-1} - |w|^{\alpha-1}w\big|^\frac{\alpha+1}{\alpha}.
 \end{align*}
 To prove \ref{b8}, use \eqref{est:exponent_inside} and \eqref{bvw-bwv}
 \begin{align*}
  |v - w|^{\alpha+1} = |v-w|^\alpha |v-w| &\leq c\big| |v|^{\alpha-1}v - |w|^{\alpha-1}w \big| |v - w| 
  \\
  &= c(|v|^{\alpha-1}v - |w|^{\alpha-1}w)(v - w) 
  \\
  &= c\b_\alpha[v,w] + c\b_\alpha[w,v] \leq c\b_\alpha[v,w],
 \end{align*}
where we also used the fact that $\b_\alpha[w,v]$ is nonnegative in the last step. To prove \ref{alpha-geq1-stuff} we simply write use Lemma \ref{lem:elementary_real} with $\gamma=\alpha$ as follows:
\begin{align*}
 |v - w|^{\alpha+1} \hspace{-0.5mm}=\hspace{-0.5mm} |v - w|^\alpha |v-w| \leq c \big||v|^{\alpha-1}v - |w|^{\alpha-1}w\big||v-w| \hspace{-0.5mm}=\hspace{-0.5mm} c (|v|^{\alpha-1}v - |w|^{\alpha-1}w)(v-w).
\end{align*}
\end{proof}

\vspace{3mm}
 Given $\delta>0$ we define the Lipschitz function $\mathcal{H}_\delta :\R\to \R$, 
\begin{align}\label{def:H_delta}
 \mathcal{H}_\delta(s) = \left\{
\begin{array}{ll}
0, & \quad s \leq 0,
\\[5pt]
 \frac{s}{\delta},  &\quad 0< s < \delta,
 \\
1,  & \quad s \geq \delta.
\end{array}
\right.
\end{align}
Using $\mathcal{H}_\delta$ we define the quantity
\begin{align*}
 \mathfrak{h}_\delta(z,z_o) := \int_{z_o}^z \mathcal{H}_\delta(s - z_o) \alpha |s|^{\alpha - 1 } \d s, \quad z, z_o \in \R.
\end{align*}
Using the odd reflection  $\widehat{\mathcal{H}}_\delta(s) := - \mathcal{H}_\delta(-s)$ we define the quantity
\begin{align*}
 \widehat{\mathfrak{h}}_\delta(z,z_o) := \int_{z_o}^z \widehat{\mathcal{H}}_\delta(s - z_o) \alpha |s|^{\alpha - 1 } \d s, \quad z, z_o \in \R.
\end{align*}
Note that both $\mathfrak{h}_\delta$ and $\widehat{\mathfrak{h}}_\delta$ are both nonnegative. We also need the related quantities
\begin{align*}
       \mathfrak{h}_\delta^\varepsilon(z,z_o) &:= \int_{z_o}^z \mathcal{H}_\delta(s - z_o) \big[(\alpha-1)(|s|+\varepsilon)^{\alpha - 2}|s| + (|s|+\varepsilon)^{\alpha - 1}\big] \d s, \quad z, z_o \in \R,
       \\
         \widehat{\mathfrak{h}}_\delta^\varepsilon(z,z_o) &:= \int_{z_o}^z \widehat{\mathcal{H}}_\delta(s - z_o) \big[(\alpha-1)(|s|+\varepsilon)^{\alpha - 2}|s| + (|s|+\varepsilon)^{\alpha - 1}\big] \d s, \quad z, z_o \in \R,
\end{align*}
where $\varepsilon > 0$. 
Counterparts of the quantities $\mathfrak{h}_\delta(z,z_o)$ and $\widehat{\mathfrak{h}}_\delta(z,z_o)$ were used previously in \cite{BoeDuGiaLiSche} to obtain a comparison principle for nonnegative solutions in the doubly nonlinear isotropic case. For us it will be important that
\begin{align}
\label{h-delta-upperbnd} 0 \leq &\mathfrak{h}_\delta(z,z_o) \leq \chi_{(z_o,\infty)}(z) \int_{z_o}^z  \alpha |s|^{\alpha - 1 } \d s = (|z|^{\alpha-1}z - |z_o|^{\alpha-1} z_o)_+,
\\
\label{h-delta-limit} \lim_{\delta \to 0} &\mathfrak{h}_\delta(z,z_o) = (|z|^{\alpha-1}z - |z_o|^{\alpha-1} z_o)_+.
\end{align}
Similarly, we note that 
\begin{align}
\label{hat-h-delta-upperbnd} 0 \leq& \widehat{\mathfrak{h}}_\delta(z,z_o) = -\int_{z_o}^z \mathcal{H}_\delta(z_o-s) \alpha |s|^{\alpha - 1 } \d s = \chi_{(z,\infty)}(z_o) \int^{z_o}_z \mathcal{H}_\delta(z_o-s) \alpha |s|^{\alpha - 1 } \d s
 \\
 \notag \leq& \chi_{(z,\infty)}(z_o) \int^{z_o}_z \alpha |s|^{\alpha - 1 } \d s = (|z_o|^{\alpha-1}z_o - |z|^{\alpha-1} z)_+,
 \\
\label{hat-h-delta-limit} \lim_{\delta \to 0}& \widehat{\mathfrak{h}}_\delta(z,z_o) = (|z_o|^{\alpha-1}z_o - |z|^{\alpha-1} z)_+.
\end{align}

\subsection{Exponential time mollification and time continuity of solutions}
We recall the definition of the exponential time mollification, utilized previously in \cite{KiLi}, see also \cite{BoeDuMa}. For $T>0$, $t\in [0,T]$, $h\in (0,T)$ and $v\in L^1(\Omega_T)$ we set
\begin{align}
\label{def:moll}
v_h(x,t):=\frac{1}{h}\int^t_0 e^\frac{s-t}{h}v(x,s)\d s.
\end{align}
Moreover, we define the reversed analogue by
\begin{align*}
v_{\overline h}(x,t) :=\frac{1}{h}\int^T_t e^\frac{t-s}{h}v(x,s)\d s.
\end{align*}
For details regarding the properties of the exponential mollification we refer to \cite[Lemma 2.2]{KiLi}, \cite[Lemma 2.2]{BoeDuMa} and \cite[Lemma 2.9]{St}. The properties of the mollification that we will use have been collected for convenience into the following lemma:
\begin{lem}\label{lem:expmolproperties} 
Suppose that $v \in L^1(\Omega_T)$, and let $p\in[1,\infty)$. Then the mollification $v_h$ defined in \eqref{def:moll} has the following properties:
\begin{enumerate}[(i)]
\item\label{expmol1}
If $v\in L^p(\Omega_T)$ then $v_h\in L^p(\Omega_T)$,
$$
\norm{v_h}_{L^p(\Omega_T)}\leq \norm{v}_{L^p(\Omega_T)},
$$
and $v_h\to v$ in $L^p(\Omega_T)$. A similar estimate also holds with $v_{\bar h}$ on the left-hand side.
\item\label{expmol2}
In the above situation, $v_h$ has a weak time derivative $\partial_t v_h$ on $\Omega_T$ given by
\begin{align*}
\partial_t v_h=\tfrac{1}{h}(v-v_h),
\end{align*}
whereas for $v_{\overline h}$ we have
\begin{align*}
\partial_t v_{\overline h}=\tfrac{1}{h}(v_{\overline h}-v).
\end{align*}
\item\label{expmol3}
If $v$ has a weak partial derivative in space then so does $v_h$ and $v_{\bar h}$ and 
\begin{align*}
 \partial_j (v_h) = (\partial_j v)_h, \hspace{5mm} \partial_j (v_{\bar h}) = (\partial_j v)_{\bar h}.
\end{align*}
\item\label{expmol4} If $v\in L^p(0,T;L^{p}(\Omega))$ then $v_h, v_{\bar h} \in C([0,T];L^{p}(\Omega))$.
\item\label{expmol5} If $v \in U^{1,{\bf p}}_{\alpha+1}$ then $v_h \in U^{1,{\bf p}}_{\alpha+1}$. If $v \in \mathring{U}^{1,{\bf p}}_{\alpha+1}$ then $v_h \in \mathring{U}^{1,{\bf p}}_{\alpha+1}$.
\end{enumerate}
\end{lem}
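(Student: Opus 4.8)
We sketch the proof; all the statements are standard properties of the exponential mollification, and the plan is to adapt the arguments of \cite[Lemma 2.2]{KiLi}, \cite[Lemma 2.2]{BoeDuMa} and \cite[Lemma 2.9]{St} to our anisotropic setting. The central observation, which drives part \ref{expmol1}, is that the causal kernel $(s,t)\mapsto \tfrac1h e^{(s-t)/h}\chi_{\{0<s<t\}}$ has \emph{both} of its one-variable integrals bounded by $1$: indeed $\tfrac1h\int_0^t e^{(s-t)/h}\d s = 1-e^{-t/h}\le 1$ and $\tfrac1h\int_s^T e^{(s-t)/h}\d t = 1-e^{(s-T)/h}\le 1$. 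First I would use the first bound together with Jensen's inequality for the convex function $r\mapsto|r|^p$ to obtain the pointwise estimate $|v_h(x,t)|^p\le \tfrac1h\int_0^t e^{(s-t)/h}|v(x,s)|^p\d s$ for a.e.\ $(x,t)$, then integrate in $(x,t)$ and interchange the order of integration in $s$ and $t$, invoking the second bound, to conclude $\norm{v_h}_{L^p(\Omega_T)}\le\norm{v}_{L^p(\Omega_T)}$; the bound for $v_{\bar h}$ is symmetric. For the convergence $v_h\to v$ in $L^p(\Omega_T)$ I would first take $v\in C^\infty_{\textnormal{o}}(\Omega_T)$, use the substitution $s=t-h\sigma$ to write $v_h(x,t)=\int_0^{t/h}e^{-\sigma}v(x,t-h\sigma)\d\sigma$, note that this tends uniformly to $v(x,t)\int_0^\infty e^{-\sigma}\d\sigma=v(x,t)$ as $h\to 0$, and then remove the regularity assumption by density together with the uniform bound just obtained.

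For \ref{expmol2} and \ref{expmol3} the plan is to differentiate the explicit representation $v_h(x,t)=\tfrac1h e^{-t/h}\int_0^t e^{s/h}v(x,s)\d s$. Formally $\partial_t v_h=-\tfrac1h v_h+\tfrac1h v=\tfrac1h(v-v_h)$, and to justify this as a weak time derivative for $v\in L^1(\Omega_T)$ I would test against $\psi\in C^\infty_{\textnormal{o}}(\Omega_T)$, expand the mollification, and reorder the integrals by Fubini so that the identity reduces to the fundamental theorem of calculus applied to an absolutely continuous function of $t$; the computation for $v_{\bar h}$ is the mirror image and gives $\partial_t v_{\bar h}=\tfrac1h(v_{\bar h}-v)$. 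Part \ref{expmol3} follows from the same testing-and-Fubini device, which moves a spatial weak derivative under the $s$-integral defining the mollification.

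Parts \ref{expmol4} and \ref{expmol5} are then quick consequences. For \ref{expmol4}: if $v\in L^p(0,T;L^p(\Omega))$ then \ref{expmol1} gives $v_h\in L^p(\Omega_T)$ and \ref{expmol2} gives $\partial_t v_h=\tfrac1h(v-v_h)\in L^p(\Omega_T)$, whence $v_h\in W^{1,p}(0,T;L^p(\Omega))\hookrightarrow C([0,T];L^p(\Omega))$; alternatively the continuity of $t\mapsto v_h(\cdot,t)$ in $L^p(\Omega)$ can be read directly off the formula using absolute continuity of the integral, and likewise for $v_{\bar h}$. For \ref{expmol5}: the contraction estimate in \ref{expmol1} holds exponent by exponent and over $\R^N$ as well, so applying it with exponent $\alpha+1$ to $v$ and with exponent $p_k$ to $\partial_k v$, together with \ref{expmol3}, yields $v_h\in L^{\alpha+1}(S_T)$ and $\partial_k v_h=(\partial_k v)_h\in L^{p_k}(S_T)$, i.e.\ $v_h\in U^{1,{\bf p}}_{\alpha+1}$. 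If moreover $v\in\mathring{U}^{1,{\bf p}}_{\alpha+1}$, I would choose $\varphi_j\to v$ in $U^{1,{\bf p}}_{\alpha+1}$ with each $\varphi_j\in C^\infty(\R^N\times[0,T])$ supported in $K_j\times[0,T]$ for some compact $K_j\subset\R^N$; the boundedness of the mollification operator from \ref{expmol1} gives $(\varphi_j)_h\to v_h$, each $(\varphi_j)_h$ is smooth in $x$ with spatial support in $K_j$, and it is smooth in $t$ because it solves the ODE $\partial_t(\varphi_j)_h=\tfrac1h(\varphi_j-(\varphi_j)_h)$ with smooth right-hand side, so $(\varphi_j)_h$ belongs to the class generating $\mathring{U}^{1,{\bf p}}_{\alpha+1}$ and therefore $v_h\in\mathring{U}^{1,{\bf p}}_{\alpha+1}$.

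The routine parts are the $L^p$-contraction and the approximate-identity convergence in \ref{expmol1}, which only exploit the (sub)normalization of the kernel. I expect the main obstacle to be the rigorous justification of \ref{expmol2} for merely $L^1$ data, where the Fubini interchange and the passage from the formal to the weak time derivative have to be carried out carefully, and — to a lesser extent — the final step of \ref{expmol5}, whose key (if elementary) point is that mollification in time does not destroy but can actually be used to upgrade smoothness in $t$ of the approximating functions, keeping them in the correct test class.
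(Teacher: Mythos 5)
Your proposal is correct and consistent with the standard literature that the paper simply cites for properties \ref{expmol1}--\ref{expmol4} (the paper does not spell out a proof of those parts, referring instead to \cite{KiLi}, \cite{BoeDuMa}, \cite{St}). For part \ref{expmol5} your argument matches the paper's own one-line justification: the first claim is indeed a direct consequence of \ref{expmol1} and \ref{expmol3} applied exponent by exponent on $S_T=\R^N\times(0,T)$, and the second follows by mollifying the approximating sequence and observing that $(\varphi_j)_h$ stays smooth (the representation $(\varphi_j)_h(x,t)=\tfrac{e^{-t/h}}{h}\int_0^t e^{s/h}\varphi_j(x,s)\,\d s$ makes the $t$-smoothness explicit) and spatially supported in the same compact $K_j$. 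One minor point worth stating when you write this up: Jensen's inequality is being applied with a subprobability kernel, so you should normalize by the mass $m(t)=1-e^{-t/h}\le 1$ and use $m(t)^{p-1}\le 1$ to drop the prefactor, exactly as you indicate. Also, in the justification of \ref{expmol2}, the Fubini interchange plus an integration by parts in $t$ (rather than literally the fundamental theorem of calculus) is the cleanest way to reduce the identity to the vanishing of $\psi$ at $t=T$; this is what your sketch amounts to and it goes through for $v\in L^1(\Omega_T)$ without further regularity.
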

\noindent 
The first claim in Property \ref{expmol5} follows from properties \ref{expmol1} and \ref{expmol3} which remain valid in the case $\Omega=\R^N$. The last claim of \ref{expmol5} follows almost immediately from the definition of the space $\mathring{U}^{1,{\bf p}}_{\alpha+1}$.

\begin{lem}\label{lem:integral-alpha-ineq}
 Let $u,v \in L^{\alpha+1}(\Omega)$ and $\alpha\in(0,1)$. Then
 \begin{align*}
  \int_\Omega |u - v|^{\alpha+1} \d x \leq c \Big(\int_\Omega |v|^{\alpha+1} + |w|^{\alpha+1}\d x \Big)^\frac{1-\alpha}{2} \Big(\int_\Omega (|v|^{\alpha-1}v - |w|^{\alpha-1}w)(v - w) \d x \Big)^\frac{\alpha+1}{2},
 \end{align*}
\end{lem}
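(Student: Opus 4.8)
The plan is to combine the pointwise estimate from Lemma \ref{lem:b_properties}\ref{b3} with Hölder's inequality, exploiting the second expression for $\b_\alpha$ to relate the integrand to the ``dual'' monotonicity quantity $(|v|^{\alpha-1}v-|w|^{\alpha-1}w)(v-w)$. First I would write the integrand $|u-v|^{\alpha+1}$ as (note: the statement mixes $u,v$ with $v,w$; I treat the genuine pair as $v,w$) $|v-w|^{\alpha+1}$ and apply the pointwise bound $|v-w|^{1+\alpha}\le c\,(|v|+|w|)^{\frac{(1-\alpha)(1+\alpha)}{2}}\b_\alpha[v,w]^{\frac{1+\alpha}{2}}$, which follows by taking the estimate in Lemma \ref{lem:b_properties}\ref{b2}, namely $\b_\alpha[v,w]\ge \tfrac1c(|v|+|w|)^{\alpha-1}|v-w|^2$, raising it to the power $\tfrac{1+\alpha}{2}$ and rearranging. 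Thus
\begin{align*}
 |v-w|^{1+\alpha} \le c\,(|v|+|w|)^{\frac{(1-\alpha)(1+\alpha)}{2}}\,\b_\alpha[v,w]^{\frac{1+\alpha}{2}}.
\end{align*}

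Next I would integrate over $\Omega$ and apply Hölder's inequality with the exponent pair $\big(\tfrac{2}{1-\alpha},\tfrac{2}{1+\alpha}\big)$, which are conjugate since $\tfrac{1-\alpha}{2}+\tfrac{1+\alpha}{2}=1$. This gives
\begin{align*}
 \int_\Omega |v-w|^{1+\alpha}\d x \le c\Big(\int_\Omega (|v|+|w|)^{1+\alpha}\d x\Big)^{\frac{1-\alpha}{2}}\Big(\int_\Omega \b_\alpha[v,w]\,\d x\Big)^{\frac{1+\alpha}{2}},
\end{align*}
where in the first factor I used that $(|v|+|w|)^{\frac{(1-\alpha)(1+\alpha)}{2}\cdot\frac{2}{1-\alpha}} = (|v|+|w|)^{1+\alpha}$, and then the elementary bound $(|v|+|w|)^{1+\alpha}\le c(|v|^{1+\alpha}+|w|^{1+\alpha})$ to replace the first factor by $\int_\Omega |v|^{\alpha+1}+|w|^{\alpha+1}\d x$.

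Finally, to pass from $\b_\alpha[v,w]$ to $(|v|^{\alpha-1}v-|w|^{\alpha-1}w)(v-w)$ in the second factor, I would invoke estimate \eqref{b5} of Lemma \ref{lem:b-property-all-alpha}, which states precisely $\b_\alpha[v,w]\le (|v|^{\alpha-1}v-|w|^{\alpha-1}w)(v-w)$ pointwise; integrating this preserves the inequality, and substituting yields the claimed bound. There is no real obstacle here: the argument is a textbook Hölder-splitting, and every ingredient (the two-sided bound \ref{b2}, the dual bound \eqref{b5}, superadditivity-type estimates for the power $1+\alpha>1$) is already available in the excerpt. The only point requiring a modicum of care is checking the conjugacy of the Hölder exponents and that the power of $(|v|+|w|)$ works out to exactly $1+\alpha$ in the first factor — which it does by the choice above.
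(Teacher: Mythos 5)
Your proof is correct and follows essentially the same route as the paper: the same Hölder split with conjugate exponents $\tfrac{2}{1-\alpha}$ and $\tfrac{2}{1+\alpha}$, the lower bound in Lemma~\ref{lem:b_properties}\ref{b2}, and the estimate \eqref{b5}; the only presentational difference is that you rearrange the pointwise bound before integrating, which lets you avoid the $\varepsilon$-regularization the paper introduces to make the factor $(|v|+|w|)^{-\nu}$ well-defined where $v=w=0$. One small slip: your opening sentence cites Lemma~\ref{lem:b_properties}\ref{b3}, but the argument you actually carry out (correctly) uses \ref{b2}, exactly as the paper does.
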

for a constant $c$ only depending on $\alpha$.
\begin{proof}{}
 Introducing the parameters $\varepsilon>0$ and $\nu = (1-\alpha)(1+\alpha)/2$, using H\"older's inequality with the exponents $2/(1-\alpha)$ and $2/(\alpha+1)$, the estimate \eqref{b5} and Property \ref{b2} of Lemma \ref{lem:b_properties} we have 
 \begin{align*}
  \int_\Omega |u - v&|^{\alpha +1} \d x = \int_\Omega (|v| + |w| + \varepsilon)^\nu (|v| + |w| + \varepsilon)^{-\nu}|v - w|^{\alpha+1} \d x 
  \\
  &\leq \Big(\int_\Omega (|v| + |w| + \varepsilon)^{\alpha+1} \d x \Big)^\frac{1-\alpha}{2} \Big(\int_\Omega (|v| + |w| + \varepsilon)^{\alpha - 1}|v-w|^2 \d x \Big)^\frac{\alpha+1}{2}
  \\
  &\leq c \Big(\int_\Omega |v|^{\alpha+1} + |w|^{\alpha+1} + \varepsilon^{\alpha+1} \d x \Big)^\frac{1-\alpha}{2}
   \Big(\int_\Omega (|v|^{\alpha-1}v - |w|^{\alpha-1}w)(v - w) \d x \Big)^\frac{\alpha+1}{2}.
 \end{align*}
Passing to the limit $\varepsilon \to 0$ we have the result.
\end{proof}

The following Lemma will be useful in the proof of existence of solutions to the Cauchy-Dirichlet problem and the Cauchy problem on $\R^N$.

\begin{lem}\label{lem:equiv-timecont} 
Let $E \subset \R^N$ be measurable. Then $u \in C([0,T]; L^{\alpha+1}(E))$ if and only if
$|u|^{\alpha - 1}u \in C([0,T]; L^{\frac{\alpha+1}{\alpha}}(E))$. 
\end{lem}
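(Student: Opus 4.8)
The plan is to reduce the equivalence to two elementary pointwise inequalities for the nonlinearity $\Phi(s):=|s|^{\alpha-1}s$ and its inverse $\Phi^{-1}(s)=|s|^{1/\alpha-1}s$, and then to upgrade pointwise bounds to $L^q$-bounds via H\"older's inequality. First observe that the membership part of the statement is automatic and symmetric: since $\int_E\big||u(t)|^{\alpha-1}u(t)\big|^{\frac{\alpha+1}{\alpha}}\d x=\int_E|u(t)|^{\alpha+1}\d x$, we have $u(t)\in L^{\alpha+1}(E)$ if and only if $|u(t)|^{\alpha-1}u(t)\in L^{\frac{\alpha+1}{\alpha}}(E)$ (and in the ``$\Leftarrow$'' direction $u$ is recovered as $\Phi^{-1}$ of the given function). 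So only the continuity of the trajectory has to be established. Moreover, if either trajectory is continuous then $t\mapsto\|u(t)\|_{L^{\alpha+1}(E)}$, equivalently $t\mapsto\big\||u(t)|^{\alpha-1}u(t)\big\|_{L^{(\alpha+1)/\alpha}(E)}$, is continuous on $[0,T]$ and hence locally bounded; this local boundedness is what makes the weighted pointwise estimates usable.

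The two inequalities are the following. For $\mu\in(0,1]$,
\[
\big||x|^{\mu-1}x-|y|^{\mu-1}y\big|\le c\,|x-y|^{\mu},\qquad x,y\in\R,
\]
which for $\mu<1$ follows from Lemma \ref{lem:elementary_real} applied with $\gamma=1/\mu>1$ to $a=|x|^{\mu-1}x$ and $b=|y|^{\mu-1}y$ (note that then $|a|^{\gamma-1}a=x$ and $|b|^{\gamma-1}b=y$), and is trivial for $\mu=1$. For $\mu\ge1$,
\[
\big||x|^{\mu-1}x-|y|^{\mu-1}y\big|\le c\,(|x|+|y|)^{\mu-1}|x-y|,\qquad x,y\in\R,
\]
which follows from the fundamental theorem of calculus applied to the $C^1$ function $s\mapsto|s|^{\mu-1}s$, whose derivative is $\mu|s|^{\mu-1}$.

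For $\alpha\in(0,1)$ I would argue as follows. The forward implication uses the first inequality with $\mu=\alpha$: with $x=u(t)$, $y=u(\tau)$, raising to the power $\tfrac{\alpha+1}{\alpha}$ and integrating over $E$ gives
\[
\big\||u(t)|^{\alpha-1}u(t)-|u(\tau)|^{\alpha-1}u(\tau)\big\|_{L^{(\alpha+1)/\alpha}(E)}^{\frac{\alpha+1}{\alpha}}\le c\,\|u(t)-u(\tau)\|_{L^{\alpha+1}(E)}^{\alpha+1},
\]
which tends to $0$ as $\tau\to t$. For the reverse implication, apply the second inequality with $\mu=1/\alpha>1$ to $\Phi^{-1}$, taking $x=|u(t)|^{\alpha-1}u(t)$ and $y=|u(\tau)|^{\alpha-1}u(\tau)$, raise to the power $\alpha+1$, integrate, and apply H\"older's inequality with exponents $\tfrac{1}{1-\alpha}$ and $\tfrac1\alpha$; the weight factor becomes $(|x|+|y|)^{(\alpha+1)/\alpha}$ (whose integral is locally bounded) and the difference factor becomes $|x-y|^{(\alpha+1)/\alpha}$, so $\|u(t)-u(\tau)\|_{L^{\alpha+1}(E)}^{\alpha+1}\to0$ as $\tau\to t$. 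The case $\alpha\ge1$ is entirely symmetric, with the roles of $u$ and $|u|^{\alpha-1}u$ interchanged: the forward implication uses the second inequality with $\mu=\alpha$ together with H\"older's inequality with exponents $\tfrac{\alpha}{\alpha-1}$ and $\alpha$, and the reverse implication uses the first inequality with $\mu=1/\alpha\le1$ applied to $\Phi^{-1}$ (no H\"older step needed). For $\alpha=1$ the statement is a tautology.

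I do not expect a genuine obstacle here; the only delicate points are the bookkeeping of the H\"older exponents — chosen precisely so that the weight factor reproduces exactly the norm that is assumed (locally) bounded — and the elementary observation that a continuous $L^q$-valued trajectory has locally bounded norm, which is what converts the weighted pointwise bounds into the desired limit as $\tau\to t$.
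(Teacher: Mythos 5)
Your proof is correct and follows essentially the same route as the paper: the same two pointwise inequalities (Lemma \ref{lem:elementary_real} for the concave power, and the fundamental theorem of calculus for the $C^1$ map $s\mapsto|s|^{\mu-1}s$ for $\mu\ge 1$), the same H\"older bookkeeping, and the same use of local boundedness of a continuous $L^q$-valued trajectory. The only cosmetic difference is that the paper treats $\alpha\in(0,1)$ by applying the $\alpha>1$ case to $v=|u|^{\alpha-1}u$ with exponent $1/\alpha$, whereas you write out the symmetric cases explicitly.
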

\begin{proof}{}
 Consider first the case $\alpha > 1$. Then we can use Lemma \ref{lem:elementary_real} to estimate
\begin{align*}
 |u(x,t_1) - u(x,t_2)|^{\alpha+1}\leq \big| |u|^{\alpha-1}u(x,t_1) - |u|^{\alpha-1}u(x,t_2)\big|^\frac{\alpha+1}{\alpha},
\end{align*}
from which we see that $|u|^{\alpha - 1}u \in C([0,T]; L^{\frac{\alpha+1}{\alpha}}(E))$ implies $u \in C([0,T]; L^{\alpha+1}(E))$. Conversely, suppose that $C([0,T]; L^{\alpha+1}(E))$. Since $s\mapsto |s|^{\alpha-1}s$ is a $C^1$-function for $\alpha > 1$ we may write 
\begin{align*}
  \big| |u|^{\alpha-1}u(x,t_1) - &|u|^{\alpha-1}u(x,t_2)\big|^\frac{\alpha+1}{\alpha} \leq  c\big[ (|u(x,t_1)|^{\alpha-1} + |u(x,t_2)|^{\alpha-1})|u(x,t_1) - u(x,t_2)|\big]^\frac{\alpha+1}{\alpha}
  \\
  &\leq c\big(|u(x,t_1)|^\frac{(\alpha-1)(\alpha+1)}{\alpha} + |u(x,t_2)|^\frac{(\alpha-1)(\alpha+1)}{\alpha}\big)|u(x,t_1) - u(x,t_2)|^\frac{\alpha+1}{\alpha}.
\end{align*}
Integrating over $E$ and applying H\"older's inequality with the exponents $\alpha/(\alpha - 1)$ and $\alpha$ we have
\begin{align*}
  \int_E \big| |u|^{\alpha-1}u(x,t_1) - |u|^{\alpha-1}u(x,t_2)\big|^\frac{\alpha+1}{\alpha} \d x &\leq c \Big[ \int_E |u(x,t_1)|^{\alpha+1} + |u(x,t_2)|^{\alpha+1} \d x \Big]^\frac{\alpha-1}{\alpha} 
  \\
  & \quad \times \Big[ \int_E |u(x,t_1) - u(x,t_2)|^{\alpha+1} \d x\Big]^\frac1\alpha.
\end{align*}
Since $C([0,T]; L^{\alpha+1}(E)) \subset L^\infty(0,T; L^{\alpha+1}(E))$ we have that the expression in the first square brackets on the right-hand side is bounded for $t_1, t_2 \in [0,T]$. Moreover, the expression in the second square brackets vanishes as $t_2 \to t_1$ due to the assumed time-continuity. Thus, we have that $|u|^{\alpha-1}u \in C([0,T]; L^{\frac{\alpha+1}{\alpha}}(\Omega))$, which completes the proof in the case $\alpha>1$. In the case $\alpha \in (0,1)$ we obtain the result from the previous case applied to the function $v:= |u|^{\alpha-1}u$ and $\alpha$ replaced by $1/\alpha > 1$. The case $\alpha=1$ is trivial.
\end{proof}

The end of this section is devoted to obtaining useful properties of functions $u$ satisfying a condition similar to the weak formulation, but where the Caratheodory vector field $A$ is replaced by a general vector valued map $\mathcal{A}$ of sufficiently high integrability. These results are used in the proof of the existence of solutions below.
\begin{lem}\label{lem:before-time-cont}
 Suppose that 
 \begin{align*}
  g &\in L^{\alpha+1}(\Omega_T) \cap L^{\bf p}(0,T; W^{1, {\bf p}}(\Omega)), \quad  \partial_t g \in L^{\alpha+1}(\Omega_T),
  \\
  u &\in g + L^{\bf p}(0,T; \overline W^{1, {\bf p}}_{\textnormal{o}}(\Omega)) \textrm{ and } u \in L^{\alpha+1}(\Omega_T), 
  \\
  \mathcal{A} &: \Omega_T \to \R^N, \quad \mathcal{A}_j \in L^{p_j'}(\Omega_T), \quad f \in L^\frac{\alpha+1}{\alpha}(\Omega_T),
 \end{align*}
and that 
\begin{align}\label{weak:general}
&\iint_{\Omega_T} \mathcal{A}\cdot \nabla \varphi - |u|^{\alpha-1} u\partial_t \varphi \d x\d t = \iint_{\Omega_T} f \varphi \d x \d t,
\end{align}
holds for all $\varphi \in C^\infty_{\textnormal{o}}(\Omega_T)$. Define the class of functions
\begin{align*}
 \mathcal{V} := \{ w \in L^{\alpha+1}(\Omega_T)\,|\, w \in L^{\bf p}(0,T; \overline W^{1, {\bf p}}_{\textnormal{o}}(\Omega)), \, \partial_t w\in L^{\alpha+1}(\Omega_T)\}.
\end{align*}
Then for every $\zeta \in C^\infty_{\textnormal{o}}((0,T);[0,\infty))$ and $w \in \mathcal{V}$ we have 
\begin{align}\label{towards-time-cont}
 \iint_{\Omega_T} \b_\alpha[u, w+g]\zeta'(t) \d x \d t &= \iint_{\Omega_T}\big(|u|^{\alpha-1}u - |w+g|^{\alpha-1}(w+g)\big)\partial_t (w+g)\zeta \d x \d t
 \\
 \notag &\quad +\iint_{\Omega_T} \mathcal{A}\cdot \nabla (u-w-g)\zeta - f \zeta(u - w - g) \d x \d t.
\end{align}
\end{lem}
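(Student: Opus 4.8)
The identity \eqref{towards-time-cont} is a statement about the distributional time derivative of $|u|^{\alpha-1}u$, so the plan is to produce an admissible test function in \eqref{weak:general} by combining $w+g$ with the cutoff $\zeta$ and then performing an integration by parts in time. The obstruction is that $w+g$ is not smooth and not compactly supported in time, so I cannot plug $\varphi = (w+g)\zeta$ directly into \eqref{weak:general}; I will first mollify in time and in space. Concretely, I would take a spatial mollification together with the exponential time mollification from \eqref{def:moll} (or a standard symmetric mollifier in $t$, which is cleaner here since no one-sidedness is needed), apply it to $w+g$, multiply by $\zeta \in C^\infty_{\textnormal{o}}((0,T))$ — which restores compact support in time — and use the resulting smooth compactly supported function as $\varphi$ in \eqref{weak:general}. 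Since $u-w-g \in L^{\bf p}(0,T;\overline W^{1,{\bf p}}_{\textnormal{o}}(\Omega))$, Lemma \ref{lem:isomorphic-spaces} guarantees it can be approximated in the relevant norm by functions in $C^\infty_{\textnormal{o}}(\Omega_T)$, which legitimizes using $u-w-g$ (times $\zeta$, after mollification) as a test function for the spatial gradient terms; passing to the limit in the mollification parameters then removes the smoothing from all the terms that only require $L^{p_k}$/$L^{\alpha+1}$ convergence, namely the $\mathcal{A}\cdot\nabla(u-w-g)\zeta$ term and the $f\zeta(u-w-g)$ term, which is how the second line of \eqref{towards-time-cont} appears.

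The heart of the matter is the time-derivative term. Writing $\psi := w+g$, after mollification the parabolic term in \eqref{weak:general} reads $\iint |u|^{\alpha-1}u\,\partial_t(\psi_h \zeta)\d x\d t = \iint |u|^{\alpha-1}u\,\psi_h \zeta' + |u|^{\alpha-1}u\,(\partial_t\psi_h)\zeta \d x\d t$, and I must pass to the limit $h\to 0$ and rearrange into $\b_\alpha$. The clean route is: first show $u$ itself, being in $L^{\alpha+1}(\Omega_T)$ with $\partial_t(|u|^{\alpha-1}u)$ controlled distributionally by \eqref{weak:general} through test functions, has enough regularity to justify the chain rule; but actually the cleaner structural observation is to apply \eqref{weak:general} with the mollified test function and use that $\psi \in \mathcal V + g$ has $\partial_t\psi \in L^{\alpha+1}(\Omega_T)$, so $\partial_t \psi_h \to \partial_t\psi$ in $L^{\alpha+1}$ and $\psi_h \to \psi$ in $L^{\alpha+1}$, while $|u|^{\alpha-1}u \in L^{(\alpha+1)/\alpha}(\Omega_T)$ by the integrability of $u$; all products then converge by Hölder. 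This yields
\begin{align*}
 \iint_{\Omega_T}|u|^{\alpha-1}u\,\psi\,\zeta' \d x\d t &= -\iint_{\Omega_T}|u|^{\alpha-1}u\,(\partial_t\psi)\zeta\d x\d t + \iint_{\Omega_T}\mathcal{A}\cdot\nabla(u-\psi)\zeta - f\zeta(u-\psi)\d x\d t.
\end{align*}
It remains to identify $\iint |u|^{\alpha-1}u\,\psi\,\zeta'$ plus the correction terms with $\iint \b_\alpha[u,\psi]\zeta'$. Recalling the definition $\b_\alpha[v,w] = \tfrac{1}{\alpha+1}(|w|^{\alpha+1}-|v|^{\alpha+1}) + |v|^{\alpha-1}v(v-w)$, we have $\b_\alpha[u,\psi] = \tfrac{1}{\alpha+1}|\psi|^{\alpha+1} - \tfrac{1}{\alpha+1}|u|^{\alpha+1} + |u|^{\alpha+1} - |u|^{\alpha-1}u\,\psi = \tfrac{1}{\alpha+1}|\psi|^{\alpha+1} + \tfrac{\alpha}{\alpha+1}|u|^{\alpha+1} - |u|^{\alpha-1}u\,\psi$.

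So $\iint \b_\alpha[u,\psi]\zeta' = \tfrac{1}{\alpha+1}\iint |\psi|^{\alpha+1}\zeta' + \tfrac{\alpha}{\alpha+1}\iint|u|^{\alpha+1}\zeta' - \iint |u|^{\alpha-1}u\,\psi\,\zeta'$, and I need the first two integrals to be accounted for. The term $\iint |\psi|^{\alpha+1}\zeta'$ integrates by parts in $t$ directly since $\psi, \partial_t\psi \in L^{\alpha+1}$: it equals $-(\alpha+1)\iint |\psi|^{\alpha-1}\psi\,(\partial_t\psi)\zeta$. The term $\iint |u|^{\alpha+1}\zeta'$ is more delicate because I do not a priori know $\partial_t u \in L^{\alpha+1}$; instead I would mollify $u$ in time, use $\partial_t(|u_h|^{\alpha-1}u_h) = \alpha|u_h|^{\alpha-1}\partial_t u_h$, rewrite $\iint|u_h|^{\alpha+1}\zeta' = -\tfrac{1}{\alpha}\cdot\alpha\iint|u_h|^{\alpha-1}u_h\,(\partial_t u_h)\zeta \cdot$ (constant) via the chain rule, and relate $\iint |u_h|^{\alpha-1}u_h(\partial_t u_h)\zeta$ back to the distributional equation \eqref{weak:general} tested against $u_h\zeta$ (valid in the limit by the density argument above). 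Collecting the pieces, the terms $|u|^{\alpha-1}u\,(\partial_t\psi)\zeta$, $|\psi|^{\alpha-1}\psi(\partial_t\psi)\zeta$, and $|u|^{\alpha-1}u(\partial_t u)\zeta$-type contributions assemble precisely into $\big(|u|^{\alpha-1}u - |\psi|^{\alpha-1}\psi\big)\partial_t\psi\,\zeta$ on the right of \eqref{towards-time-cont}, while the $\mathcal{A}$ and $f$ terms are already in place. The main obstacle is the careful bookkeeping of the mollification limits for the term $\iint|u|^{\alpha+1}\zeta'$, ensuring that testing \eqref{weak:general} against (a mollification of) $u\zeta$ is legitimate — this uses $u - g \in L^{\bf p}(0,T;\overline W^{1,{\bf p}}_{\textnormal{o}}(\Omega))$ together with Lemma \ref{lem:isomorphic-spaces} and the fact that $g$ contributes terms handled by the $\psi$ analysis — and that all the cross terms cancel correctly; everything else is Hölder and dominated convergence.
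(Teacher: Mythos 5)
Your overall strategy is the right one and matches the paper's: the admissible test function is (a time-mollification of) $\zeta\,(w-(u-g))$, the $\mathcal{A}$- and $f$-terms pass to the limit by H\"older, and the identity is recovered by rearranging the parabolic term into $\b_\alpha[u,w+g]$. But there is a genuine gap at exactly the step you flag as "the main obstacle". After mollifying, the problematic contribution is $\iint |u|^{\alpha-1}u\,\partial_t u_h\,\zeta$. You propose to handle it by the chain rule for $u_h$ and by "relating $\iint |u_h|^{\alpha-1}u_h(\partial_t u_h)\zeta$ back to the equation tested against $u_h\zeta$". Testing the equation against $u_h\zeta$ produces $\iint |u|^{\alpha-1}u\,(\partial_t u_h)\zeta$, not $\iint |u_h|^{\alpha-1}u_h(\partial_t u_h)\zeta$; the discrepancy is
\begin{align*}
\iint_{\Omega_T}\big(|u|^{\alpha-1}u-|u_h|^{\alpha-1}u_h\big)\,\partial_t u_h\,\zeta\,\d x\d t
=\frac1h\iint_{\Omega_T}\big(|u|^{\alpha-1}u-|u_h|^{\alpha-1}u_h\big)(u-u_h)\,\zeta\,\d x\d t ,
\end{align*}
and this does \emph{not} tend to zero as $h\to0$: the integrand is nonnegative of size roughly $h^{-1}|u-u_h|^{\alpha+1}$, and no rate for $\|u-u_h\|_{L^{\alpha+1}}$ is available. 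Your plan therefore does not close. The paper's resolution is precisely to exploit the sign of this term (nonnegative because $\partial_t u_h=\tfrac1h(u-u_h)$ and $s\mapsto|s|^{\alpha-1}s$ is monotone), discard it to obtain the inequality ``$\leq$'' in \eqref{towards-time-cont}, and then repeat the argument with the \emph{backward} mollification $[u-g]_{\bar h}$, for which the error term has the opposite sign, to get ``$\geq$''.

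Relatedly, your remark that a symmetric mollifier in $t$ would be "cleaner since no one-sidedness is needed" points in the wrong direction: the one-sidedness is essential, because it is what gives $\partial_t u_h=\tfrac1h(u-u_h)$ and hence the sign of the error term above. With a symmetric mollifier that term has no sign and cannot be discarded, so the two-sided inequality argument is unavailable. To repair your proof, replace the "relate back to the equation" step by the add-and-subtract of $|u_h|^{\alpha-1}u_h\,\partial_t u_h=\tfrac1{\alpha+1}\partial_t|u_h|^{\alpha+1}$, keep the signed remainder, and run the argument once with $[\,\cdot\,]_h$ and once with $[\,\cdot\,]_{\bar h}$. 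One further small point: the convergence $\partial_t g_h\to\partial_t g$ requires care near $t=0$ (the exponential mollification produces an extra term $\tfrac1h g(x,0)e^{-t/h}$), which is harmless only because $\zeta$ is compactly supported in $(0,T)$; this should be said explicitly.
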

\begin{proof}{}
Lemma \ref{lem:weaksol2} and Lemma \ref{lem:expmolproperties} show that we may use \eqref{weak:general} with the choice
\begin{align*}
 \varphi := \zeta(w - [u-g]_h). 
\end{align*}
The goal is to pass to the limit $h\to 0$. Due to matching H\"older exponents and \ref{expmol1} and \ref{expmol3} of Lemma \ref{lem:expmolproperties} we have
\begin{align}\label{mathcalA-convg}
 \iint_{\Omega_T} \mathcal{A}\cdot \nabla (\zeta(w - [u-g]_h))\d x \d t \xrightarrow[h\to 0]{} \iint_{\Omega_T} \mathcal{A}\cdot \nabla (w + g - u)\zeta\d x \d t.
\end{align}
Similarly, we see for the right-hand side of \eqref{weak:general} that 
\begin{align}\label{f-limit}
 \iint_{\Omega_T} f \zeta(w - [u-g]_h) \d x \d t \xrightarrow[h\to0]{} \iint_{\Omega_T} f \zeta(w + g - u) \d x \d t.
\end{align}
We bound the first term on the left-hand side of \eqref{weak:general} from above as follows: 
\begin{align}\label{diffterm-est}
 |u|^{\alpha-1} u\partial_t \varphi &= |u|^{\alpha-1} u (w-[u-g]_h) \zeta'  + |u|^{\alpha-1} u \zeta  \partial_t(w + g_h)
 -|u|^{\alpha-1}u  \partial_t u_h \zeta
 \\
 \notag &= |u|^{\alpha-1} u (w-[u-g]_h) \zeta'  + |u|^{\alpha-1} u \zeta  \partial_t(w + g_h) - |u_h|^{\alpha-1}u_h \partial_t u_h \zeta
 \\
 \notag &\quad + \big(|u_h|^{\alpha-1}u_h - |u|^{\alpha-1}u \big) \partial_t u_h \zeta 
 \\
 \notag &\leq |u|^{\alpha-1} u (w-[u-g]_h) \zeta'  + |u|^{\alpha-1} u \zeta ( \partial_t w + \partial_t g_h) - \tfrac{1}{\alpha+1}\partial_t |u_h|^{\alpha+1} \zeta,
\end{align}
where in the last step we have used \ref{expmol2} of Lemma \ref{lem:expmolproperties} to conclude that the term on the second last row is nonpositive. Integrating \eqref{diffterm-est} over $\Omega_T$ we have 
\begin{align}\label{difftermint-est}
 \iint_{\Omega_T} |u|^{\alpha-1} u\partial_t \varphi \d x \d t & \leq  \iint_{\Omega_T} |u|^{\alpha-1} u (w-[u-g]_h) \zeta'  + |u|^{\alpha-1} u \zeta ( \partial_t w + \partial_t g_h) \d x \d t
 \\
\notag & \quad + \iint_{\Omega_T} \tfrac{1}{\alpha+1}|u_h|^{\alpha+1} \zeta' \d x \d t,
\end{align}
where we have integrated by parts in the last term. Passing to the limit $h\to 0$ to obtain the corresponding quantities without the time mollification is straightforward except for the term $\partial_t g_h$ which requires some care. Note that 
\begin{align}\label{time-der-of-exp-mol}
 \partial_t g_h(x,t) = [\partial_t g]_h(x,t) + \frac1h g(x,0) e^{-\frac{t}{h}}, 
\end{align}
where the last term makes sense since $g$ has a time-continuous representative due to the fact that $g \in W^{1,\alpha+1}(0,T;L^{\alpha+1}(\Omega))$. Note that for $t\geq \delta>0$, we have
\begin{align*}
 \Big|\frac1h g(x,0) e^{-\frac{t}{h}}\Big|^{\alpha+1} \leq \delta^{-(\alpha+1)}\Big|\frac{t}{h} e^{-\frac{t}{h}}\Big|^{\alpha+1} |g(x,0)|^{\alpha+1} \leq c_\delta |g(x,0)|^{\alpha+1},
\end{align*}
where the last expression is integrable over $\Omega_T$ so that by the dominated convergence theorem 
\begin{align*}
 \int^T_\delta\int_\Omega \Big|\frac1h g(x,0) e^{-\frac{t}{h}}\Big|^{\alpha+1} \d x \d t \xrightarrow[h\to 0]{} 0.
\end{align*}
Furthermore, since $\partial_t g \in L^{\alpha+1}(\Omega_T)$ we have that $[\partial_t g]_h \to \partial_t g$ in $L^{\alpha+1}(\Omega_T)$ as $h\to 0$ according to \ref{expmol1} of Lemma \ref{lem:expmolproperties}. These observations and \eqref{time-der-of-exp-mol} show that $\partial_t g_h$ converges to $\partial_t g$ in $L^p(\Omega\times (\delta,T))$ as $h\to 0$ which is sufficient for our purposes due to the presence of the compactly supported function $\zeta$ in \eqref{difftermint-est}. Thus, we can pass to the limit on the right-hand side of \eqref{difftermint-est}. Combining this with \eqref{mathcalA-convg} and \eqref{f-limit} we end up with 
\begin{align*}
 0 &\leq \iint_{\Omega_T} |u|^{\alpha-1} u (w + g - u) \zeta'  + |u|^{\alpha-1} u \partial_t(w + g)\zeta  + \tfrac{1}{\alpha+1}|u|^{\alpha+1} \zeta' \d x \d t 
 \\
 &\quad -\iint_{\Omega_T} \mathcal{A}\cdot \nabla (w + g - u)\zeta \d x \d t + \iint_{\Omega_T} f \zeta(w + g - u) \d x \d t
 \\
 &= \iint_{\Omega_T} |u|^{\alpha-1} u (w + g - u) \zeta'  + \big[|u|^{\alpha-1} u - |w+g|^{\alpha-1}(w+g)\big] \partial_t(w + g)\zeta \d x \d t
 \\
 & \quad + \iint_{\Omega_T} \tfrac{1}{\alpha+1}|u|^{\alpha+1} \zeta' - \tfrac{1}{\alpha+1} |w+g|^{\alpha+1} \zeta'\d x \d t + \iint_{\Omega_T} \mathcal{A}\cdot \nabla (u - w - g)\zeta \d x \d t 
 \\
 &\quad + \iint_{\Omega_T} f \zeta(w + g - u) \d x \d t
 \\
 &= \iint_{\Omega_T} \big[|u|^{\alpha-1} u - |w+g|^{\alpha-1}(w+g)\big] \partial_t(w + g)\zeta \d x \d t -\iint_{\Omega_T} \b_\alpha[u,w+g]\zeta' \d x \d t
 \\
 &\quad  + \iint_{\Omega_T} \mathcal{A}\cdot \nabla (u- w - g)\zeta \d x \d t - \iint_{\Omega_T} f \zeta(u - w - g) \d x \d t.
\end{align*}
This proves ``$\leq$'' in \eqref{towards-time-cont}. For the other direction, choose the test function $\varphi = \zeta(w - [u-g]_{\bar h})$ and proceed as before.
\end{proof}

\begin{lem}\label{lem:time-cont}
 Suppose that $g$, $u$, $\mathcal{A}$ and $f$ satisfy the assumptions in Lemma \ref{lem:before-time-cont}. Then $u$ has a representative in $C([0,T];L^{\alpha+1}(\Omega))$. 
\end{lem}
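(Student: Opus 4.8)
The plan is to approximate $u$ in time by its exponential mollification, feed the resulting comparison functions into the identity \eqref{towards-time-cont}, and exploit the monotonicity of $s\mapsto|s|^{\alpha-1}s$ to turn \eqref{towards-time-cont} into a one-sided differential inequality. Put $\phi:=u-g$, so that $\phi\in L^{\alpha+1}(\Omega_T)\cap L^{\bf p}(0,T;\overline W^{1,{\bf p}}_{\textnormal{o}}(\Omega))$, and for $h\in(0,T)$ let $\phi_h$ be the mollification \eqref{def:moll} and $v_h:=\phi_h+g$. By Lemma~\ref{lem:expmolproperties} we have $\phi_h\in\mathcal V$, so \eqref{towards-time-cont} applies with $w=\phi_h$; moreover $v_h\in C([0,T];L^{\alpha+1}(\Omega))$ (use \ref{expmol4} of Lemma~\ref{lem:expmolproperties} and $g\in W^{1,\alpha+1}(0,T;L^{\alpha+1}(\Omega))\hookrightarrow C([0,T];L^{\alpha+1}(\Omega))$), and $v_h\to u$ in $L^{\alpha+1}(\Omega_T)$ as $h\to0$. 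Since $\partial_t\phi_h=\tfrac1h(\phi-\phi_h)=\tfrac1h(u-v_h)$ by \ref{expmol2}, monotonicity of $s\mapsto|s|^{\alpha-1}s$ forces $\big(|u|^{\alpha-1}u-|v_h|^{\alpha-1}v_h\big)\partial_t\phi_h\geq0$, hence
\begin{align*}
 \big(|u|^{\alpha-1}u-|v_h|^{\alpha-1}v_h\big)\partial_t v_h\geq\big(|u|^{\alpha-1}u-|v_h|^{\alpha-1}v_h\big)\partial_t g.
\end{align*}

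Plugging $w=\phi_h$ into \eqref{towards-time-cont} shows that $\psi_h(t):=\int_\Omega\b_\alpha[u(t),v_h(t)]\,\d x$ lies in $L^1(0,T)$ and has an absolutely continuous representative $\bar\psi_h\geq0$ with $\bar\psi_h'=-F_h$ for some $F_h\in L^1(0,T)$; the pointwise inequality above gives $F_h\geq G_h$ a.e., where
\begin{align*}
 G_h(t):=\int_\Omega\big(|u|^{\alpha-1}u-|v_h|^{\alpha-1}v_h\big)\partial_t g+\mathcal A\cdot\nabla(u-v_h)-f(u-v_h)\,\d x,
\end{align*}
so that $\bar\psi_h(t)\leq\bar\psi_h(s)+\|G_h\|_{L^1(0,T)}$ for $0\leq s\leq t\leq T$. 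Here $G_h\to0$ in $L^1(0,T)$, because $|u|^{\alpha-1}u-|v_h|^{\alpha-1}v_h\to0$ in $L^{\frac{\alpha+1}{\alpha}}(\Omega_T)$, $\nabla(u-v_h)\to0$ in $\prod_j L^{p_j}(\Omega_T)$ and $u-v_h\to0$ in $L^{\alpha+1}(\Omega_T)$ by \ref{expmol1} and \ref{expmol3}, while $\partial_t g$, $\mathcal A_j$, $f$ are fixed; and $\int_0^T\psi_h=\iint_{\Omega_T}\b_\alpha[u,v_h]\leq\iint_{\Omega_T}\big(|u|^{\alpha-1}u-|v_h|^{\alpha-1}v_h\big)(u-v_h)\,\d x\,\d t\to0$ by \eqref{b5}.

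Fix $\delta\in(0,T)$. Averaging the inequality $\bar\psi_h(t)\leq\bar\psi_h(s)+\|G_h\|_{L^1(0,T)}$ over $s\in(0,\delta)$ gives $\sup_{[\delta,T]}\bar\psi_h\leq\tfrac1\delta\int_0^T\psi_h+\|G_h\|_{L^1(0,T)}=:\rho_h(\delta)\to0$ as $h\to0$. If $\alpha\geq1$, estimate \ref{b8} of Lemma~\ref{lem:b_alpha_geq1} yields $\|u(t)-v_h(t)\|_{L^{\alpha+1}(\Omega)}^{\alpha+1}\leq c\,\psi_h(t)\leq c\,\rho_h(\delta)$ for a.e.\ $t\in[\delta,T]$, whence $\|v_h(t)-v_k(t)\|_{L^{\alpha+1}(\Omega)}^{\alpha+1}\leq c(\rho_h(\delta)+\rho_k(\delta))$ for a.e.\ such $t$; since $t\mapsto\|v_h(t)-v_k(t)\|_{L^{\alpha+1}(\Omega)}$ is continuous, this holds for \emph{every} $t\in[\delta,T]$, so $\{v_h\}$ is Cauchy in $C([\delta,T];L^{\alpha+1}(\Omega))$ and its limit, which equals $u$ a.e., is a representative of $u$ in $C([\delta,T];L^{\alpha+1}(\Omega))$. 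If $\alpha\in(0,1)$ one argues identically with $|v_h|^{\alpha-1}v_h$ in place of $v_h$, using \ref{b4} of Lemma~\ref{lem:b_properties} to get $\big\||u(t)|^{\alpha-1}u(t)-|v_h(t)|^{\alpha-1}v_h(t)\big\|_{L^{(\alpha+1)/\alpha}(\Omega)}^{(\alpha+1)/\alpha}\leq c\,\psi_h(t)$, thereby producing a representative of $|u|^{\alpha-1}u$ in $C([\delta,T];L^{\frac{\alpha+1}{\alpha}}(\Omega))$. Letting $\delta\to0$, and noting that the representatives on the various intervals $[\delta,T]$ are mutually consistent (being fixed up to a.e.\ equality), we obtain a representative continuous on $(0,T]$.

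It remains to reach $t=0$, where no initial condition is at hand. Here I would repeat the argument with the reversed mollification $\phi_{\overline h}$ and $\widehat v_h:=\phi_{\overline h}+g$ in place of $\phi_h$ and $v_h$: by \ref{expmol2} one has $\partial_t\phi_{\overline h}=\tfrac1h(\phi_{\overline h}-\phi)=\tfrac1h(\widehat v_h-u)$, so the monotonicity step now yields the opposite inequality, and the resulting one-sided control reads $\widehat\psi_h(t_1)\leq\widehat\psi_h(t_2)+\|\widehat G_h\|_{L^1(0,T)}$ for $t_1\leq t_2$, with $\widehat\psi_h(t):=\int_\Omega\b_\alpha[u(t),\widehat v_h(t)]\,\d x$ and $\|\widehat G_h\|_{L^1(0,T)}\to0$; averaging over $t_2\in(T-\delta,T)$ gives, for every $\delta\in(0,T)$, a representative of $u$ (resp.\ of $|u|^{\alpha-1}u$ when $\alpha\in(0,1)$) continuous on $[0,T-\delta]$, hence continuous on $[0,T)$. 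The two representatives so obtained agree a.e.\ on $(0,T)$, hence everywhere there by continuity, and glue into a single function continuous on all of $[0,T]$ and equal to $u$ (resp.\ to $|u|^{\alpha-1}u$) a.e.; in the case $\alpha\in(0,1)$ one finally invokes Lemma~\ref{lem:equiv-timecont} to pass from continuity of $|u|^{\alpha-1}u$ to continuity of $u$. The main obstacle is precisely the sign bookkeeping that converts the integral identity \eqref{towards-time-cont} into a usable one-sided inequality for $\psi_h$, together with the need to run the scheme with both the forward and the reversed mollification in order to control the two endpoints.
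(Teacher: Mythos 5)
Your proof is correct, and the key ingredients are the same as in the paper (Lemma~\ref{lem:before-time-cont} applied with $w$ equal to an exponential time–mollification of $u-g$, the sign of the resulting $\partial_t w$ term, and the algebraic estimates \ref{b8}/\ref{b4} for $\b_\alpha$). The technical execution, however, genuinely differs. The paper instead inserts $\zeta=\chi^\tau_\varepsilon\psi$ for a ramp $\chi^\tau_\varepsilon$ and a cutoff $\psi$ vanishing near $T$, uses the sign of the mollified term to drop it, and invokes the Lebesgue differentiation theorem at $\varepsilon\to0$ to obtain a pointwise bound on $\int_\Omega \b_\alpha[u,[u-g]_{\bar h}+g](\cdot,\tau)\,\d x$ valid for a.e.\ $\tau\in[0,T/2]$ and uniform in $\tau$ as $h\to0$; uniform convergence of the continuous approximants on the dense set $[0,T/2]\setminus N$ then yields a continuous representative of $|u|^{\alpha-1}u$. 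You instead observe that $\psi_h(t)=\int_\Omega\b_\alpha[u(t),v_h(t)]\,\d x$ has an absolutely continuous representative solving $\bar\psi_h'=-F_h$, turn the sign information into a one-sided differential inequality $\bar\psi_h'\leq -G_h$ with $\|G_h\|_{L^1}\to0$, and average in $s$ over $(0,\delta)$ to control $\sup_{[\delta,T]}\bar\psi_h$; the Cauchy property of $\{v_h\}$ in $C([\delta,T];L^{\alpha+1}(\Omega))$ (or of $\{|v_h|^{\alpha-1}v_h\}$ in $C([\delta,T];L^{(\alpha+1)/\alpha}(\Omega))$ for $\alpha\in(0,1)$) then produces the representative. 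Your version avoids the Lebesgue point machinery and the separate handling of the exceptional null set at the price of introducing the $W^{1,1}(0,T)$ structure of $\psi_h$ and a $\delta\to0$ gluing step; the paper's version reads the bound off directly at $\tau$ but must track a null set of times. Both cover the two endpoints by running the argument once with the forward and once with the reversed mollification, just assigned to the opposite ends. Minor bookkeeping remarks: the step from $\|v_h(t)-v_k(t)\|^{\alpha+1}_{L^{\alpha+1}}\leq c(\rho_h+\rho_k)$ a.e.\ to all $t$ does need the adjustment $\|v_h-v_k\|^{\alpha+1}\leq 2^\alpha(\|u-v_h\|^{\alpha+1}+\|u-v_k\|^{\alpha+1})$, and the identity $\bar\psi_h'=-F_h$ uses \eqref{towards-time-cont} extended by linearity from nonnegative $\zeta$ to all $\zeta\in C^\infty_{\textnormal{o}}(0,T)$; neither affects the argument.
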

\begin{proof}{}
Let $\psi \in C^\infty([0,T])$ such that $\psi=1$ on $[0,3T/4]$, $\supp \psi \subset [0, 7T/8]$ and $|\psi'| \leq 16/T$. For $\tau \in [0,T/2]$ and $\varepsilon \in (0, T/4)$ define the function 
\begin{align*}
 \chi^\tau_\varepsilon(t) := \left\{
\begin{array}{ll}
0, & \quad t < \tau
\\[5pt]
 \varepsilon^{-1}(t - \tau),  & \quad t \in [\tau, \tau + \varepsilon]
 \\
1,  & \quad t > \tau.
\end{array}
\right.
\end{align*}
We use \eqref{towards-time-cont} with $\zeta = \chi^\tau_\varepsilon \psi$ and $w=[u-g]_{\bar h}$ and obtain
\begin{align}\label{pineapple}
 &\varepsilon^{-1} \int^{\tau+\varepsilon}_\tau \int_\Omega \b_\alpha[u, [u-g]_{\bar h} + g] \d x \d t = -\iint_{\Omega_T} \b_\alpha[u, [u-g]_{\bar h} + g] \chi^\tau_\varepsilon \psi' \d x \d t
 \\
 \notag & +\iint_{\Omega_T}\big(|u|^{\alpha-1}u - |[u-g]_{\bar h}+g|^{\alpha-1}([u-g]_{\bar h} + g)\big)\partial_t ([u-g]_{\bar h} + g)\chi^\tau_\varepsilon \psi \d x \d t
 \\
 \notag & -\iint_{\Omega_T} \mathcal{A}\cdot \nabla (u-g - [u-g]_{\bar h})\chi^\tau_\varepsilon \psi + f \chi^\tau_\varepsilon \psi (u - g - [u-g]_{\bar h}) \d x \d t.
\end{align}
Note that by \ref{expmol2} of \ref{lem:expmolproperties}, we have
\begin{align*}
 &\big(|u|^{\alpha-1}u - |[u-g]_{\bar h}+g|^{\alpha-1}([u-g]_{\bar h} + g)\big)\partial_t [u-g]_{\bar h} 
 \\
 &\quad = \big(|u|^{\alpha-1}u - |[u-g]_{\bar h}+g|^{\alpha-1}([u-g]_{\bar h} + g)\big)\frac{\big([u-g]_{\bar h} + g - u\big)}{h} \leq 0.
\end{align*}
At this point we follow somewhat different approaches depending on the value of $\alpha$. If $\alpha\in(0,1)$ we use the previous estimate in \eqref{pineapple}, together with \ref{b3} and \ref{b4} of Lemma \ref{lem:b_properties} to obtain
\begin{align}
&c \varepsilon^{-1} \int^{\tau+\varepsilon}_\tau \int_\Omega \big| |u|^{\alpha-1}u - |[u-g]_{\bar h} + g|^{\alpha-1}([u-g]_{\bar h} + g)\big|^\frac{\alpha+1}{\alpha} \d x \d t
\\
 \notag \leq &\varepsilon^{-1} \int^{\tau+\varepsilon}_\tau \int_\Omega \b_\alpha[u, [u-g]_{\bar h} + g] \d x \d t = -\iint_{\Omega_T} \b_\alpha[u, [u-g]_{\bar h} + g] \chi^\tau_\varepsilon \psi' \d x \d t
 \\
 \notag & +\iint_{\Omega_T}\big(|u|^{\alpha-1}u - |[u-g]_{\bar h}+g|^{\alpha-1}([u-g]_{\bar h} + g)\big)\partial_t g \chi^\tau_\varepsilon \psi \d x \d t
 \\
 \notag & +\iint_{\Omega_T} \mathcal{A}\cdot \nabla (u-g - [u-g]_{\bar h}) \chi^\tau_\varepsilon \psi + f \chi^\tau_\varepsilon \psi (u - g - [u-g]_{\bar h}) \d x \d t
 \\
 \notag & \leq c\iint_{\Omega_T} \b_\alpha[u, [u-g]_{\bar h} + g] \d x \d t + \sum^N_{j=1} \iint_{\Omega_T} |\mathcal{A}_j| |\partial_j(u-g) - [\partial_j(u-g)]_{\bar h}\d x \d t
 \\
 \notag & \quad + \iint_{\Omega_T}\big||u|^{\alpha-1}u - |[u-g]_{\bar h}+g|^{\alpha-1}([u-g]_{\bar h} + g)\big|  |\partial_t g | \d x \d t 
 \\
\notag & \quad + \iint_{\Omega_T}|f||u-g - [u-g]_{\bar h}|\d x \d t 
 \\
 \notag & \leq c \iint_{\Omega_T} \big|u - g - [u-g]_{\bar h}\big|^{\alpha+1} \d x \d t + \sum^N_{j=1} \norm{\mathcal{A}_j}_{L^{p'_j}(\Omega_T)} \norm{\partial_j(u-g) - [\partial_j(u-g)]_{\bar h}}_{L^{p_j}(\Omega_T)}
 \\
 \notag &\quad  + \norm{\partial_t g}_{L^{\alpha+1}(\Omega_T)}  \norm{|u|^{\alpha-1}u - |[u-g]_{\bar h}+g|^{\alpha-1}([u-g]_{\bar h} + g)}_{L^\frac{\alpha+1}{\alpha}(\Omega_T)}
 \\
 \notag & \quad + \norm{f}_{L^\frac{\alpha+1}{\alpha}(\Omega_T)} \norm{u - g - [u-g]_{\bar h}}_{L^{\alpha+1}(\Omega_T)}.
\end{align}
Since the integrand on the first row is integrable we have by taking $\varepsilon \to 0$ and using the Lebesgue differentiation theorem that
\begin{align*}
 &\int_\Omega \big| |u|^{\alpha-1}u - |[u-g]_{\bar h} + g|^{\alpha-1}([u-g]_{\bar h} + g)\big|^\frac{\alpha+1}{\alpha}(x,\tau) \d x 
 \\
 &\leq c \norm{u - g - [u-g]_{\bar h}}_{L^{\alpha+1}(\Omega_T)}^{\alpha+1}  + c\sum^N_{j=1} \norm{\mathcal{A}_j}_{L^{p'_j}(\Omega_T)} \norm{\partial_j(u-g) - [\partial_j(u-g)]_{\bar h}}_{L^{p_j}(\Omega_T)}
 \\
 & \quad + c\norm{\partial_t g}_{L^{\alpha+1}(\Omega_T)}  \norm{|u|^{\alpha-1}u - |[u-g]_{\bar h}+g|^{\alpha-1}([u-g]_{\bar h} + g)}_{L^\frac{\alpha+1}{\alpha}(\Omega_T)}
 \\
 &\quad + c\norm{f}_{L^\frac{\alpha+1}{\alpha}(\Omega_T)} \norm{u - g - [u-g]_{\bar h}}_{L^{\alpha+1}(\Omega_T)}, 
\end{align*}
for all $\tau \in [0,T/2]\setminus N_h$, where $N_h$ is a set of measure zero. Take a sequence $h_j \to 0$ and denote $N=\cup_{j\in \N} N_{h_j}$ which still has measure zero. Due to \ref{expmol1} of Lemma \ref{lem:expmolproperties}, we conclude from the previous estimate that 
\begin{align}\label{unif_convg_ae}
 \lim_{j\to\infty} \sup_{\tau \in [0,T/2]\setminus N} \int_\Omega \big| |u|^{\alpha-1}u - |[u-g]_{\bar h_j} + g|^{\alpha-1}([u-g]_{\bar h_j} + g)\big|^\frac{\alpha+1}{\alpha}(x,\tau) \d x = 0.
\end{align}
Note that each function $u^\alpha_j:= |[u-g]_{\bar h_j} + g|^{\alpha-1}([u-g]_{\bar h_j} + g)$ is in $C([0,T];L^\frac{\alpha+1}{\alpha}(\Omega))$ due to \ref{expmol4} of Lemma \ref{lem:expmolproperties} and the fact that $g$ is in $C([0,T];L^{\alpha+1}(\Omega))$. This observation combined with the uniform limit \eqref{unif_convg_ae} on the dense set $[0,T/2]\setminus N$ and the completeness of $L^\frac{\alpha+1}{\alpha}(\Omega)$ shows that $u^\alpha_j$ converges uniformly on $[0,T/2]$ to a representative of $|u|^{\alpha-1}u$ which is therefore continuous on $[0,T/2]$ into $L^\frac{\alpha+1}{\alpha}(\Omega)$. The interval $[T/2, T]$ can be treated in a similar fashion so we conclude that $|u|^{\alpha-1}u \in C([0,T];L^\frac{\alpha+1}{\alpha}(\Omega))$. Due to Lemma \ref{lem:equiv-timecont}, this means that $u \in C([0,T];L^{\alpha+1}(\Omega))$. If instead $\alpha \geq 1$, we use property \ref{b8} of Lemma \ref{lem:b_alpha_geq1} to estimate the left-hand side of \eqref{pineapple} downwards. The term involving $\b_\alpha[v,w]$ on the right-hand side of \eqref{pineapple} can be estimated upwards by property \ref{b7} of Lemma \ref{lem:b_alpha_geq1}. The rest of the argument is analogous to the previous case.
\end{proof}
We also have the following counterpart of Lemma \ref{lem:time-cont} in the case where the bounded domain $\Omega$ is replaced by $\R^N$.
\begin{lem}\label{lem:time-cont_RN}
 Let $u \in \mathring{U}^{1,{\bf p}}_{\alpha+1}$ and suppose that $\mathcal{A}_j \in L^{p_j'}(S_T)$ for $j \in \{1,\dots,N\}$. Let $f\in L^\frac{\alpha+1}{\alpha}(S_T)$ and suppose that 
 \begin{align}\label{weak:general-RN}
&\iint_{S_T} \mathcal{A}\cdot \nabla \varphi - |u|^{\alpha-1} u\partial_t \varphi \d x\d t = \iint_{S_T} f \varphi \d x \d t,
\end{align}
holds for all $\varphi \in C^\infty_{\textnormal{o}}(S_T)$. Then $u \in C([0,T]; L^{\alpha+1}(\R^N))$.
\end{lem}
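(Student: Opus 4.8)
The plan is to transcribe the two-step argument of Lemmas~\ref{lem:before-time-cont} and \ref{lem:time-cont} to the whole-space setting, taking $g\equiv 0$ and replacing $\Omega_T$ by $S_T$. First I would establish the $\R^N$-counterpart of the identity \eqref{towards-time-cont}: for every $\zeta\in C^\infty_{\textnormal{o}}((0,T);[0,\infty))$ and every $w$ in the class $\mathcal V_{S_T}:=\{w\in\mathring U^{1,{\bf p}}_{\alpha+1}\,|\,\partial_t w\in L^{\alpha+1}(S_T)\}$,
\begin{align*}
 \iint_{S_T}\b_\alpha[u,w]\,\zeta'\d x\d t&=\iint_{S_T}\bigl(|u|^{\alpha-1}u-|w|^{\alpha-1}w\bigr)\partial_t w\,\zeta\d x\d t\\
 &\quad+\iint_{S_T}\mathcal A\cdot\nabla(u-w)\,\zeta-f\,\zeta\,(u-w)\d x\d t.
\end{align*}
The derivation is the one in the proof of Lemma~\ref{lem:before-time-cont} with $g=0$: test \eqref{weak:general-RN} with $\varphi=\zeta(w-u_h)$, and with $\varphi=\zeta(w-u_{\bar h})$ for the reverse inequality; split $|u|^{\alpha-1}u\,\partial_t\varphi$ using \ref{expmol2} of Lemma~\ref{lem:expmolproperties} so that $(|u_h|^{\alpha-1}u_h-|u|^{\alpha-1}u)\,\partial_t u_h\,\zeta$ carries the favourable sign and $|u_h|^{\alpha-1}u_h\,\partial_t u_h=\tfrac1{\alpha+1}\partial_t|u_h|^{\alpha+1}$; integrate by parts in $t$; and pass to the limit $h\to0$ using \ref{expmol1} and \ref{expmol3}. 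The pairings are exactly as in the bounded case: $|u|^{\alpha-1}u\in L^{(\alpha+1)/\alpha}(S_T)$ against $\partial_t\varphi\in L^{\alpha+1}(S_T)$, $\mathcal A_j\in L^{p_j'}(S_T)$ against $\partial_j\varphi\in L^{p_j}(S_T)$, and $f\in L^{(\alpha+1)/\alpha}(S_T)$ against $\varphi\in L^{\alpha+1}(S_T)$. (Note $\partial_t\varphi=\zeta'(w-u_h)+\zeta(\partial_t w-\partial_t u_h)$ is indeed in $L^{\alpha+1}(S_T)$ since $\partial_t u_h=\tfrac1h(u-u_h)$ and $w\in\mathcal V_{S_T}$.)

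The one genuinely new point --- and the step I expect to be the main technical obstacle --- is that \eqref{weak:general-RN} is assumed only for $\varphi\in C^\infty_{\textnormal{o}}(S_T)$, whereas $\zeta(w-u_h)$ is compactly supported only in \emph{time}. Closing this gap requires a whole-space analogue of the test-function extension used (via an auxiliary lemma) in the proof of Lemma~\ref{lem:before-time-cont}: one must show that \eqref{weak:general-RN} remains valid for every $\varphi$ that is compactly supported in time and lies in $\mathring U^{1,{\bf p}}_{\alpha+1}$ with $\partial_t\varphi\in L^{\alpha+1}(S_T)$. This is exactly where the hypothesis $u\in\mathring U^{1,{\bf p}}_{\alpha+1}$ is essential --- it propagates to $u_h$ and $u_{\bar h}$ by \ref{expmol5} of Lemma~\ref{lem:expmolproperties} (the same holds for the reversed mollification) --- so that such a $\varphi$ can be approximated by functions of $C^\infty_{\textnormal{o}}(S_T)$ (a standard truncation-and-mollification procedure: mollify in space and time, then truncate in space) in a topology strong enough to pass to the limit in all four terms of \eqref{weak:general-RN}, using in addition the global integrability of $\mathcal A$, $f$ and the spatial derivatives of $u$.

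With this identity in hand, the second step is a verbatim repetition of the proof of Lemma~\ref{lem:time-cont}. One chooses $\zeta=\chi^\tau_\varepsilon\psi$ with $\psi,\chi^\tau_\varepsilon$ as there, takes $w=u_{\bar h}$ --- which lies in $\mathcal V_{S_T}$ since $\partial_t u_{\bar h}=\tfrac1h(u_{\bar h}-u)\in L^{\alpha+1}(S_T)$ and $u_{\bar h}\in\mathring U^{1,{\bf p}}_{\alpha+1}$ --- and uses $(|u|^{\alpha-1}u-|u_{\bar h}|^{\alpha-1}u_{\bar h})\,\partial_t u_{\bar h}\le 0$. Bounding $\b_\alpha$ below by a constant multiple of $\bigl||u|^{\alpha-1}u-|u_{\bar h}|^{\alpha-1}u_{\bar h}\bigr|^{(\alpha+1)/\alpha}$, and bounding the right-hand-side occurrence of $\b_\alpha$ above by the same quantity --- via \ref{b3}, \ref{b4} of Lemma~\ref{lem:b_properties} when $\alpha\in(0,1)$ and \ref{b7}, \ref{b8} of Lemma~\ref{lem:b_alpha_geq1} when $\alpha\ge1$ --- and then letting $\varepsilon\to0$ with the Lebesgue differentiation theorem gives, for a.e.\ $\tau$, a bound on $\int_{\R^N}\bigl||u|^{\alpha-1}u-|u_{\bar h}|^{\alpha-1}u_{\bar h}\bigr|^{(\alpha+1)/\alpha}(x,\tau)\d x$ by a sum of products of $S_T$-norms of $u-u_{\bar h}$ and of $\partial_j u-[\partial_j u]_{\bar h}$; all of these tend to $0$ as $h\to0$ by \ref{expmol1} of Lemma~\ref{lem:expmolproperties}, uniformly in $\tau$ on a set of full measure. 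Since each $|u_{\bar h_j}|^{\alpha-1}u_{\bar h_j}$ belongs to $C([0,T];L^{(\alpha+1)/\alpha}(\R^N))$ by \ref{expmol4} of Lemma~\ref{lem:expmolproperties} and Lemma~\ref{lem:equiv-timecont}, completeness of $L^{(\alpha+1)/\alpha}(\R^N)$ forces $|u|^{\alpha-1}u$ to have a representative in $C([0,T];L^{(\alpha+1)/\alpha}(\R^N))$; Lemma~\ref{lem:equiv-timecont} then yields $u\in C([0,T];L^{\alpha+1}(\R^N))$. As in the proof of Lemma~\ref{lem:time-cont}, one splits $[0,T]$ into two halves to accommodate the support of $\psi$.
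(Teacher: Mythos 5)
Your proposal is correct and follows essentially the same route as the paper, which itself only outlines this proof by reducing it to the arguments of Lemmas \ref{lem:before-time-cont} and \ref{lem:time-cont} with $g=0$, justifying the test functions $\zeta(w-u_h)$, $\zeta(w-u_{\bar h})$ by approximation using properties \ref{expmol2} and \ref{expmol5} of Lemma \ref{lem:expmolproperties} — exactly the point you single out as the main technical step.
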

\begin{proof}{}
 The argument is similar to the previous case and will only be outlined. We introduce the space 
 \begin{align*}
  \mathcal{V} := \big\{ w \,|\, w = v_h \textnormal{ or } w = v_{\bar h} \textnormal{ for some } v \in \mathring{U}^{1,{\bf p}}_{\alpha+1}\big\},
 \end{align*}
and note that $\varphi = \zeta ( w - u_h)$ and $\varphi = \zeta (w - u_{\bar h})$ are valid choices in \eqref{weak:general-RN} for $\zeta\in C^\infty_{\textnormal{o}}((0,T);[0,\infty))$ and $w \in \mathcal{V}$, since $\varphi$ and its first order derivatives can be approximated in suitable $L^p$-spaces by smooth compactly supported functions due to properties \ref{expmol2} and \ref{expmol5} of Lemma \ref{lem:expmolproperties}. This leads, as in the proof of Lemma \ref{lem:before-time-cont} to 
\begin{align*}
 \iint_{S_T} \b_\alpha[u,w]\zeta' \d x \d t = \iint_{S_T}\mathcal{A}\cdot (\nabla u - \nabla w) + (|u|^{\alpha-1}u - |w|^{\alpha-1}w)\partial_t w  - f(u-w) \d x \d t.
\end{align*}
We can use the previous equation with the choices $w = u_h$ and $w = u_{\bar h}$ as in the proof of Lemma \ref{lem:time-cont} to arrive at the conclusion.
\end{proof}

The following result is a variant of Lemme 1.2 of \cite{Raviart} adapted to our setting.
\begin{lem}\label{lem:initial-val} Let $u$, $g$, $\mathcal{A}$ and $f$ satisfy the assumptions of Lemma \ref{lem:before-time-cont}. Then we have
\begin{align}\label{TH-IVL}
 \iint_{\Omega_T} \mathcal{A} \cdot \nabla (u - g) \d x \d t =  \tfrac{\alpha}{\alpha+1}\norm{u(0)}_{L^{\alpha+1}(\Omega)}^{\alpha+1}  - \tfrac{\alpha}{\alpha+1}\norm{u(T)}_{L^{\alpha+1}(\Omega)}^{\alpha+1}
 \\
 \notag + \iint_{\Omega_T} f(u-g)\d x \d t - \int_\Omega |u|^{\alpha-1}ug(x,0)\d x + \int_\Omega |u|^{\alpha-1}ug(x,T)\d x - \iint_{\Omega_T} |u|^{\alpha-1}u \partial_t g \d x \d t.
\end{align}
\end{lem}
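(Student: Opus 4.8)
The identity \eqref{TH-IVL} is an "integrated by parts in time" formula, and the natural way to obtain it is to start from \eqref{towards-time-cont} of Lemma \ref{lem:before-time-cont} and remove the cutoff $\zeta$, replacing it by (approximations of) the constant function $1$ on $(0,T)$. Concretely, I would choose $w := [u-g]_{\bar h}$ in \eqref{towards-time-cont}, so that $w+g = [u-g]_{\bar h}+g \in \mathcal{V}$, and take $\zeta = \zeta_\sigma$ a standard family in $C^\infty_{\textnormal{o}}((0,T);[0,\infty))$ with $\zeta_\sigma \nearrow 1$ pointwise on $(0,T)$, $0\le \zeta_\sigma \le 1$, and $\zeta_\sigma'$ supported in $(0,\sigma)\cup(T-\sigma,T)$ with the usual behaviour (think of $\zeta_\sigma$ built from the functions $\chi^\tau_\varepsilon$ already used in the proof of Lemma \ref{lem:time-cont}). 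The term $\iint \b_\alpha[u,w+g]\zeta_\sigma' \d x\d t$ then splits into a piece near $t=0$ and a piece near $t=T$; since (by Lemma \ref{lem:time-cont}, applied to $[u-g]_{\bar h}+g$ which lies in $C([0,T];L^{\alpha+1})$, or directly since all the relevant functions have time-continuous representatives) the map $t\mapsto \int_\Omega \b_\alpha[u,[u-g]_{\bar h}+g]\d x$ is continuous on $[0,T]$, these two pieces converge as $\sigma\to 0$ to $\int_\Omega \b_\alpha[u,w+g](x,T)\d x - \int_\Omega \b_\alpha[u,w+g](x,0)\d x$. Meanwhile the three terms on the right of \eqref{towards-time-cont} with $\zeta_\sigma$ converge by dominated convergence to the same integrals with $\zeta$ deleted, since their integrands are in $L^1(\Omega_T)$ (the product $\mathcal{A}\cdot\nabla(u-w-g)$ is $L^1$ by Hölder and $\mathcal{A}_j\in L^{p_j'}$; the term with $\partial_t(w+g)$ is $L^1$ because $|u|^{\alpha-1}u, |w+g|^{\alpha-1}(w+g)\in L^{(\alpha+1)/\alpha}$ and $\partial_t(w+g)\in L^{\alpha+1}$ — note $\partial_t w = \tfrac1h(w-u)$ by \ref{expmol2}; and the $f$-term is $L^1$ by Hölder). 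This yields, for each fixed $h>0$,
\[
 \int_\Omega \b_\alpha[u,w+g](x,T) - \b_\alpha[u,w+g](x,0)\d x = \iint_{\Omega_T}\!\!\big(|u|^{\alpha-1}u - |w+g|^{\alpha-1}(w+g)\big)\partial_t(w+g) + \mathcal{A}\cdot\nabla(u-w-g) - f(u-w-g)\d x\d t,
\]
with $w=[u-g]_{\bar h}$.

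**Passing to the limit $h\to 0$.** The next step is to let $h\to0$ in the displayed identity. On the right-hand side, $w=[u-g]_{\bar h}\to u-g$ in $L^{\alpha+1}(\Omega_T)$ and $\partial_j w\to \partial_j(u-g)$ in $L^{p_j}(\Omega_T)$ by \ref{expmol1} and \ref{expmol3} of Lemma \ref{lem:expmolproperties}, so $u-w-g\to 0$ in $L^{\alpha+1}$ and $\nabla(u-w-g)\to 0$ in the relevant $L^{p_j}$; hence the $\mathcal{A}\cdot\nabla(u-w-g)$ and $f(u-w-g)$ terms vanish in the limit. For the remaining term, again use \ref{expmol2}: $\partial_t w = \tfrac1h(w-u)$, so that
\[
 \big(|u|^{\alpha-1}u - |w+g|^{\alpha-1}(w+g)\big)\partial_t(w+g) = \tfrac1h\big(|u|^{\alpha-1}u - |w+g|^{\alpha-1}(w+g)\big)(w+g-u) + \big(|u|^{\alpha-1}u - |w+g|^{\alpha-1}(w+g)\big)\partial_t g.
\]
The first summand is $\le 0$ pointwise (monotonicity of $s\mapsto|s|^{\alpha-1}s$), and the second converges in $L^1(\Omega_T)$ to $\big(|u|^{\alpha-1}u - |u|^{\alpha-1}u\big)\partial_t g = 0$ as $h\to0$, using $w+g\to u$ in $L^{\alpha+1}$, hence $|w+g|^{\alpha-1}(w+g)\to |u|^{\alpha-1}u$ in $L^{(\alpha+1)/\alpha}$, together with $\partial_t g\in L^{\alpha+1}$. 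Therefore the $\liminf$ of the right-hand side is $\le \iint_{\Omega_T}\mathcal{A}\cdot\nabla(u-g) - f(u-g)\d x\d t$. On the left, $w+g\to u$ in $L^{\alpha+1}(\Omega_T)$ and (using time-continuity of the $u^\alpha_j$ from the proof of Lemma \ref{lem:time-cont}, i.e. $|w+g|^{\alpha-1}(w+g)\to|u|^{\alpha-1}u$ uniformly in $C([0,T];L^{(\alpha+1)/\alpha}(\Omega))$) the boundary values at $t=0$ and $t=T$ pass to the limit, giving $\int_\Omega \b_\alpha[u,u](x,T) - \b_\alpha[u,u](x,0)\d x$. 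But $\b_\alpha[u,u]\equiv 0$. So naively the left side tends to $0$, which combined with the one-sided estimate only yields an inequality; to get the \emph{equality} \eqref{TH-IVL} I would keep track of the boundary terms more carefully by \emph{not} collapsing $\b_\alpha[u,w+g]$ to $\b_\alpha[u,u]$ prematurely. Using the first representation $\b_\alpha[v,w] = \tfrac{\alpha}{\alpha+1}(|v|^{\alpha+1}-|w|^{\alpha+1}) - w(|v|^{\alpha-1}v-|w|^{\alpha-1}w)$ with $v=u$, $w=[u-g]_{\bar h}+g$, the boundary term at a time $t\in\{0,T\}$ equals $\tfrac{\alpha}{\alpha+1}\int_\Omega |u(t)|^{\alpha+1} - |[u-g]_{\bar h}(t)+g(t)|^{\alpha+1}\d x - \int_\Omega ([u-g]_{\bar h}(t)+g(t))\big(|u(t)|^{\alpha-1}u(t) - |[u-g]_{\bar h}(t)+g(t)|^{\alpha-1}([u-g]_{\bar h}(t)+g(t))\big)\d x$, and as $h\to 0$ this converges to $\tfrac{\alpha}{\alpha+1}\int_\Omega |u(t)|^{\alpha+1} - |u(t)|^{\alpha+1}\d x$ — still zero. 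The point I actually need is the opposite grouping: I should instead go back one step and \emph{not} integrate the $\tfrac1{\alpha+1}|u_h|^{\alpha+1}$ term by parts, or rather, redo the computation keeping $g$ and $u-g$ separate, so that the boundary terms come out as $\pm\tfrac{\alpha}{\alpha+1}\|u(t)\|_{L^{\alpha+1}}^{\alpha+1} \mp \int_\Omega |u(t)|^{\alpha-1}u(t)\,g(x,t)\d x$.

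**The cleaner route, and the main obstacle.** In view of the previous remark, the efficient derivation is: start directly from the weak formulation \eqref{weak:general} tested with $\varphi = \zeta_\sigma(g_h - [u-g]_h - g_h)$ — no, more simply, mimic Lemma \ref{lem:before-time-cont}'s proof but take the test function $\varphi := \zeta_\sigma\,[u-g]_h$ (which is admissible by Lemma \ref{lem:weaksol2} and Lemma \ref{lem:expmolproperties}), expand $|u|^{\alpha-1}u\,\partial_t\varphi$ using $\partial_t[u-g]_h = \tfrac1h(u-g-[u-g]_h) = [\partial_t(u-g)]_h + \tfrac1h(u(x,0)-g(x,0))e^{-t/h}$ as in \eqref{time-der-of-exp-mol}, and use the convexity inequality \eqref{mono-convex} (Raviart's Lemme 1.1) in the form $\big(|u|^{\alpha-1}u - |u_h|^{\alpha-1}u_h\big)u \ge \tfrac{\alpha}{\alpha+1}(|u|^{\alpha+1}-|u_h|^{\alpha+1})$ — or rather its mollified/averaged consequence $\iint (|u|^{\alpha-1}u)\,\partial_t u_h\,\zeta \ge \iint \tfrac{\alpha}{\alpha+1}\partial_t(\text{something})$ — to handle the awkward $\tfrac1h$ term. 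Then let $h\to0$ (using \ref{expmol1}, \ref{expmol3}, \ref{expmol4}, and the $L^p(\Omega\times(\delta,T))$-convergence of $\partial_t g_h$ established inside Lemma \ref{lem:before-time-cont}), obtaining an identity with $\zeta_\sigma$; finally let $\sigma\to0$, using $u\in C([0,T];L^{\alpha+1}(\Omega))$ (Lemma \ref{lem:time-cont}) to make sense of and pass to the limit in the boundary contributions $\tfrac{\alpha}{\alpha+1}\|u(0)\|^{\alpha+1}_{L^{\alpha+1}} - \tfrac{\alpha}{\alpha+1}\|u(T)\|^{\alpha+1}_{L^{\alpha+1}}$ and the terms $\mp\int_\Omega |u|^{\alpha-1}u\,g(x,\cdot)\d x$ at $t=0,T$ (here one uses that $|u|^{\alpha-1}u\in C([0,T];L^{(\alpha+1)/\alpha}(\Omega))$ and $g$ has a time-continuous $L^{\alpha+1}$-representative). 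The main obstacle is exactly the term $\tfrac1h(u(x,0)-g(x,0))e^{-t/h}$ coming from the initial value in \eqref{time-der-of-exp-mol}: because we are integrating all the way down to $t=0$ (no cutoff near $0$), this term is \emph{not} negligible — it is precisely what produces the boundary contributions $\tfrac{\alpha}{\alpha+1}\|u(0)\|^{\alpha+1}_{L^{\alpha+1}(\Omega)}$ and $-\int_\Omega |u|^{\alpha-1}u\,g(x,0)\d x$ in \eqref{TH-IVL}. So the delicate bookkeeping is to compute $\lim_{h\to0}\tfrac1h\iint_{\Omega_T} \big(\cdots\big)(x,t)\,e^{-t/h}\zeta_\sigma(t)\d x\d t$ correctly (it localizes at $t=0$ and, since $\tfrac1h e^{-t/h}$ is an approximate identity at $0$ with unit mass on $(0,\infty)$, picks out the trace at $t=0$ of the coefficient, which involves $u(x,0)$ and $g(x,0)$ — and this is why the time-continuity results of Lemma \ref{lem:time-cont} and the time-continuous representative of $g$ are indispensable). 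Everything else is routine Hölder/dominated-convergence bookkeeping of the type already carried out in Lemma \ref{lem:before-time-cont}.
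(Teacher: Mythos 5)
Your strategy is genuinely different from the paper's: you propose exponential time mollification of $u-g$ plus convexity, then removal of a time cutoff to recover the boundary contributions, whereas the paper reflects $u$, $g$, $\mathcal{A}$, $f$ across $t=0$ and $t=T$ onto $(-T,2T)$, applies the integrated weak form of Lemma \ref{lem:3rd-equiv-formulation} on $(t-h,t)$ with $v = u(\cdot,t)-g(\cdot,t)$, integrates over $t$, bounds the telescoping difference quotient from below via \eqref{mono-convex}, and sends $h\to 0$. In the paper's Steklov-type route the endpoint terms $\tfrac{\alpha}{\alpha+1}\norm{u(0)}^{\alpha+1}_{L^{\alpha+1}(\Omega)}$ and $\tfrac{\alpha}{\alpha+1}\norm{u(T)}^{\alpha+1}_{L^{\alpha+1}(\Omega)}$ emerge directly from the telescoping, so the ``where do the boundary terms come from'' bookkeeping you agonize over never arises.

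More importantly, your ``cleaner route'' has a concrete gap at the very step you flag as the crux. You write $\partial_t[u-g]_h = \tfrac1h(u-g-[u-g]_h) = [\partial_t(u-g)]_h + \tfrac1h\big(u(x,0)-g(x,0)\big)e^{-t/h}$ ``as in \eqref{time-der-of-exp-mol}.'' The first equality is property \ref{expmol2} and is fine; the second is not. Identity \eqref{time-der-of-exp-mol} is obtained by integrating by parts in the definition of $[\partial_t g]_h$, which needs $\partial_t g\in L^{\alpha+1}(\Omega_T)$ — an explicit hypothesis of Lemma \ref{lem:before-time-cont}. For $u-g$ there is no hypothesis $\partial_t u\in L^{\alpha+1}(\Omega_T)$: a weak solution carries only $\partial_t(|u|^{\alpha-1}u)$ in the distributional sense of the PDE. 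So this decomposition is unjustified, and the ``main obstacle'' you identify, the $\tfrac1h e^{-t/h}$ term at $t=0$, is not the mechanism that produces the boundary contributions. A workable version of your plan would instead add and subtract $|[u-g]_h+g|^{\alpha-1}([u-g]_h+g)$ inside $|u|^{\alpha-1}u\,\partial_t[u-g]_h$, drop the resulting nonnegative difference term, rewrite the rest by the chain rule as $\tfrac{1}{\alpha+1}\partial_t|[u-g]_h+g|^{\alpha+1}-|[u-g]_h+g|^{\alpha-1}([u-g]_h+g)\,\partial_t g$, integrate against $\zeta_\sigma$ by parts, pass $h\to 0$, and only then let $\zeta_\sigma\to 1$; the boundary terms then come from $\zeta_\sigma'$ localizing at $t=0,T$ together with $u,g\in C([0,T];L^{\alpha+1}(\Omega))$. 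Because the dropped term is one-signed this gives only one inequality; the reverse requires repeating with $[u-g]_{\bar h}$, exactly as the paper also needs both directions. As proposed, neither of your two attempted routes closes.
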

\begin{proof}{} \noindent We extend $u$ to the time interval $[-T, 2T]$ by reflection, i.e. 
\begin{equation}
    \tilde{u}(t)= \begin{cases} u(-t),& -T \leq t \leq 0,\\
    u(t), & 0\leq t \leq T,\\
    u(2T-t), & T \leq t \leq 2T.        
    \end{cases}
\end{equation} 
Similarly consider extensions $\tilde g$, $\tilde{\mathcal{A}}$ and $\tilde f$ of the functions $\mathcal{A}$ and $g$ to the set $\Omega \times (-T,2T)$. In the following we will denote also the extended maps simply by $u$, $g$, $\mathcal{A}$ and $f$. Due to the time-continuity of $u$, equation \eqref{weak:general} is true also for $\varphi$ compactly supported in $\Omega \times (-T,2T)$ provided that the integrals are taken over $\Omega \times (-T,2T)$. Due to Lemma \ref{lem:time-cont} and the equivalent weak formulation of Lemma \ref{lem:3rd-equiv-formulation} we have for $-T<t_1 < t_2 < 2T$ and all $v\in \overline W^{1, {\bf p}}_{\textnormal{o}}(\Omega) \cap L^{\alpha+1}(\Omega)$ that
\begin{align*}
 \int_\Omega \big( |u|^{\alpha-1}u(x,t_2) - |u|^{\alpha-1}u(x,t_1) \big) v(x) \d x = \int^{t_2}_{t_1} \int_\Omega f(x,s)v(x) - \mathcal{A}(x,s) \cdot \nabla v(x) \d x \d s.
\end{align*}
Especially, taking $t \in (0,T)$, $h>0$ sufficiently small, $t_1=t-h$, $t_2 = t$, noting that $v= u(\cdot,t) - g(\cdot, t)$ is a valid choice for a.e. $t$ and integrating over $t$ we see that
\begin{align}\label{grgr}
  &\iint_{\Omega_T} \big( |u|^{\alpha-1}u(x,t) - |u|^{\alpha-1}u(x,t-h) \big) u(x,t) \d x \d t 
 \\
 \notag &=  \int^T_0 \int^t_{t-h} \int_\Omega  f(x,s)(u(x,t) - g(x,t)) - \mathcal{A}(x,s) \cdot \nabla (u(x,t) - g(x,t))\d x \d s \d t
 \\
 \notag &\quad + \iint_{\Omega_T} \big( |u|^{\alpha-1}u(x,t) - |u|^{\alpha-1}u(x,t-h) \big) g(x,t) \d x \d t.
\end{align}
The left-hand side of \eqref{grgr} can be bounded from below using \eqref{mono-convex} as follows:
\begin{align*}
 \frac1h \int^T_{T-h} & \frac{\alpha}{\alpha+1}\int_\Omega |u(x,t)|^{\alpha+1} \d x \d t -  \frac1h \int^0_{-h} \int_\Omega \frac{\alpha}{\alpha+1}|u(x,t)|^{\alpha+1} \d x \d t 
 \\
 &= \frac1h \int^T_0\int_\Omega \frac{\alpha}{\alpha+1}\big(|u(x,t)|^{\alpha+1} - |u(x,t-h)|^{\alpha+1} \big) \d x \d t 
 \\
 &\leq \frac1h \int^T_0\int_\Omega   \big( |u|^{\alpha-1}u(x,t) - |u|^{\alpha-1}u(x,t-h) \big) u(x,t) \d x \d t.
\end{align*}
The last term on the right-hand side of \eqref{grgr} multiplied by $\tfrac1h$ can be expressed as
\begin{align*}
& \frac1h \iint_{\Omega_T} \big( |u|^{\alpha-1}u(x,t) - |u|^{\alpha-1}u(x,t-h) \big) g(x,t) \d x \d t = \frac1h \int^T_{T-h}\int_\Omega |u|^{\alpha-1}u g \d x \d t
 \\
 & - \frac1h \int^0_{-h}\int_\Omega |u|^{\alpha-1}u(x,t) g(x, t+h)\d x \d t + \hspace{-0.5mm} \int^{T-h}_0 \hspace{-2mm}\int_\Omega |u|^{\alpha-1}u(x,t)\frac{\big(g(x,t) - g(x,t+h)\big)}{h} \d x \d t.
\end{align*}
Combining the last three calculations we obtain
\begin{align}\label{rtrwv}
 &\frac1h \int^T_{T-h}  \frac{\alpha}{\alpha+1}\int_\Omega |u(x,t)|^{\alpha+1} \d x \d t -  \frac1h \int^0_{-h} \int_\Omega \frac{\alpha}{\alpha+1}|u(x,t)|^{\alpha+1} \d x \d t 
 \\
 \notag & \leq  \frac1h \int^T_0 \int^t_{t-h} \int_\Omega f(x,s)(u(x,t) - g(x,t)) - \mathcal{A}(x,s) \cdot \nabla (u(x,t) - g(x,t))\d x \d s \d t 
 \\
 \notag & \quad + \frac1h \int^T_{T-h}\int_\Omega |u|^{\alpha-1}u g \d x \d t  - \frac1h \int^0_{-h}\int_\Omega |u|^{\alpha-1}u(x,t) g(x, t+h)\d x \d t 
 \\
 \notag & \quad    - \int^{T-h}_0 \int_\Omega |u|^{\alpha-1}u(x,t)\frac{\big(g(x,t+h) - g(x,t)\big)}{h} \d x \d t.
\end{align}
We want to pass to the limit $h\to 0$. For the terms involving integrals over $[-h,0]$ or $[T-h,T]$ this is straightforward due to the appropriate time continuity of $u$ and $g$. Also for the integral containing $f$ and $\mathcal{A}$ passing to the limit works as expected due to the integrability properties of $f$, $\mathcal{A}$ and $\nabla (u-g)$. The nontrivial limit is the last term on the right-hand side. Here we note that 
\begin{align*}
 \int^{T-h}_0\int_\Omega \Big| \frac{\big(g(x,t+h) - g(x,t)\big)}{h} - \partial_t g(x,t) \Big|^{\alpha+1} \d x \d t \xrightarrow[h\to 0]{} 0,
\end{align*}
since $\partial_t g \in L^{\alpha+1}(\Omega_T)$. This can be verified by using the fundamental theorem of calculus together with the absolute continuity of $g$ along almost every line segment in the time direction, or alternatively by first approximating $g$ with a smooth function and passing to the limit. Hence, taking $h\to 0$ in \eqref{rtrwv} yields
\begin{align*}
 \frac{\alpha}{\alpha+1}\norm{u(T)}_{L^{\alpha+1}(\Omega)}^{\alpha+1} - \frac{\alpha}{\alpha+1}\norm{u(0)}_{L^{\alpha+1}(\Omega)}^{\alpha+1} \leq  \iint_{\Omega_T} f(u-g) - \mathcal{A} \cdot \nabla (u-g) \d x \d t
 \\
 -\int_\Omega |u|^{\alpha-1}ug(x,0)\d x + \int_\Omega |u|^{\alpha-1}ug(x,T)\d x - \iint_{\Omega_T} |u|^{\alpha-1}u \partial_t g \d x \d t.
\end{align*}
An estimate in the reverse direction can be proved by analyzing the quantity 
\begin{align*}
 \frac1h \int^T_0\int_\Omega   \big( |u|^{\alpha-1}u(x,t+h) - |u|^{\alpha-1}u(x,t) \big) u(x,t) \d x \d t 
\end{align*}
in an analogous manner.
\end{proof}
We have the following counterpart when $\Omega$ is replaced by $\R^N$.
\begin{lem}\label{lem:Raviart-RN}
 Let $u$, $\mathcal{A}$ and $f$ satisfy the assumptions of Lemma \ref{lem:time-cont_RN}. Then 
 \begin{align*}
  \iint_{S_T} \mathcal{A}\cdot \nabla u \d x \d t =  \tfrac{\alpha}{\alpha+1}\norm{u(0)}_{L^{\alpha+1}(\R^N)}^{\alpha+1}  - \tfrac{\alpha}{\alpha+1}\norm{u(T)}_{L^{\alpha+1}(\R^N)}^{\alpha+1}
 + \iint_{S_T} f u \d x.
 \end{align*}
\end{lem}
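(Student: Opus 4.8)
The plan is to run the proof of Lemma \ref{lem:initial-val} almost verbatim, in the simplified situation where there is no boundary datum (formally $g\equiv 0$ in the scheme of that lemma) and the bounded domain $\Omega$ is replaced by $\R^N$. The only genuinely new point to watch is that the space $\mathring{U}^{1,{\bf p}}_{\alpha+1}$ carries no $L^1$-integrability, so one has to check that every integral written down is finite; this is automatic, since membership $u\in\mathring{U}^{1,{\bf p}}_{\alpha+1}$ gives $u\in L^{\alpha+1}(S_T)$ and $\partial_j u\in L^{p_j}(S_T)$, which are exactly the H\"older conjugates of $f\in L^\frac{\alpha+1}{\alpha}(S_T)$ and $\mathcal{A}_j\in L^{p_j'}(S_T)$, and since Lemma \ref{lem:time-cont_RN} supplies a representative $u\in C([0,T];L^{\alpha+1}(\R^N))$, equivalently $|u|^{\alpha-1}u\in C([0,T];L^\frac{\alpha+1}{\alpha}(\R^N))$.

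First I would extend $u$, $\mathcal{A}$ and $f$ to $\R^N\times(-T,2T)$ by even reflection in time about $t=0$ and $t=T$, exactly as in Lemma \ref{lem:initial-val}, noting that the time continuity of $u$ makes \eqref{weak:general-RN} valid for test functions compactly supported in $\R^N\times(-T,2T)$. Next I would derive the $S_T$-analogue of the equivalent weak formulation used there: testing \eqref{weak:general-RN} with $\varphi=\zeta(t)v(x)$ — first for $v\in C^\infty_{\textnormal{o}}(\R^N)$, then by density for $v$ in the closure of $C^\infty_{\textnormal{o}}(\R^N)$ with respect to $\norm{v}_{L^{\alpha+1}(\R^N)}+\sum_k\norm{\partial_k v}_{L^{p_k}(\R^N)}$ — and letting $\zeta$ approximate the indicator of an interval $(t_1,t_2)$, which is legitimate because $|u|^{\alpha-1}u$ is time continuous, one obtains
\begin{align*}
\int_{\R^N}\big(|u|^{\alpha-1}u(x,t_2)-|u|^{\alpha-1}u(x,t_1)\big)v(x)\d x=\int^{t_2}_{t_1}\int_{\R^N}f(x,s)v(x)-\mathcal{A}(x,s)\cdot\nabla v(x)\d x\d s
\end{align*}
for a.e.\ $-T<t_1<t_2<2T$; for a.e.\ $t$ the slice $v=u(\cdot,t)$ is admissible in this identity.

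Then I would insert $v=u(\cdot,t)$, $t_1=t-h$, $t_2=t$, integrate over $t\in(0,T)$ and divide by $h$. The convexity inequality \eqref{mono-convex}, applied with $a=u(x,t)$ and $b=u(x,t-h)$, bounds the left-hand side from below by $\tfrac{\alpha}{\alpha+1}\tfrac1h\big(\int^T_{T-h}-\int^0_{-h}\big)\norm{u(t)}_{L^{\alpha+1}(\R^N)}^{\alpha+1}\d t$ (using the reflected extension), which tends to $\tfrac{\alpha}{\alpha+1}\big(\norm{u(T)}_{L^{\alpha+1}(\R^N)}^{\alpha+1}-\norm{u(0)}_{L^{\alpha+1}(\R^N)}^{\alpha+1}\big)$ as $h\to0$ by the continuity of $t\mapsto u(t)$ in $L^{\alpha+1}(\R^N)$. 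The right-hand side tends to $\iint_{S_T}fu-\mathcal{A}\cdot\nabla u\d x\d t$ by the same Steklov/Fubini limit as in Lemma \ref{lem:initial-val}; over $\R^N$ this is justified by rewriting the iterated time average of $F(s,t):=\int_{\R^N}f(x,s)u(x,t)-\mathcal{A}(x,s)\cdot\nabla u(x,t)\d x$, via the substitution $s=t-\sigma h$, as $\int_0^1\int_0^T[F(t-\sigma h,t)-F(t,t)]\d t\d\sigma+\iint_{S_T}fu-\mathcal{A}\cdot\nabla u$, and controlling $\int_0^T|F(t-\sigma h,t)-F(t,t)|\d t$ by H\"older's inequality in $t$ together with continuity of translation of $f$ in $L^\frac{\alpha+1}{\alpha}(S_T)$ and of each $\mathcal{A}_j$ in $L^{p_j'}(S_T)$ and the finiteness of $\norm{u}_{L^{\alpha+1}(S_T)}$ and $\norm{\partial_j u}_{L^{p_j}(S_T)}$. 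This yields ``$\leq$'' in the asserted identity. The reverse inequality follows analogously from the forward difference quotient $\tfrac1h\int^T_0\int_{\R^N}\big(|u|^{\alpha-1}u(x,t+h)-|u|^{\alpha-1}u(x,t)\big)u(x,t)\d x\d t$, now applying \eqref{mono-convex} with $a=u(x,t)$, $b=u(x,t+h)$ and changing sign, and using the weak formulation with $t_1=t$, $t_2=t+h$; combining the two inequalities gives the claimed identity.

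The step I expect to cost the most effort is not any single estimate but the passage to the equivalent weak formulation on the unbounded domain together with the limit $h\to0$ in the term containing $f$ and $\mathcal{A}$: on a bounded cylinder the available $L^1$-control and the reference to Lemma \ref{lem:3rd-equiv-formulation} make these automatic, whereas over $\R^N$ one has to be a little careful with the density of compactly supported test functions and with splitting the product $f(s)u(t)-\mathcal{A}(s)\cdot\nabla u(t)$ so that the $s$-dependence (through $f$, $\mathcal{A}$) and the $t$-dependence (through $u$, $\nabla u$) are each absorbed by their respective global integrability. No new idea beyond the bounded case is needed; it is purely a matter of careful bookkeeping.
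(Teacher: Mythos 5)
Your proposal is correct and follows essentially the same route as the paper: the paper's proof of Lemma \ref{lem:Raviart-RN} is exactly the argument of Lemma \ref{lem:initial-val} run with $g\equiv 0$ over $\R^N$, taking $v=u(\cdot,t)$ in the sliced weak formulation, with the membership $u\in\mathring{U}^{1,{\bf p}}_{\alpha+1}$ being precisely what makes that choice of $v$ admissible. Your additional bookkeeping (matching H\"older exponents over $S_T$, continuity of translations for the $f$- and $\mathcal{A}$-terms) correctly fills in the details the paper leaves implicit.
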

The proof is similar to the proof of Lemma \ref{lem:initial-val} except that we take $v=u(\cdot,t)$ without any boundary function $g$. The function space of $u$ is of critical importance as it guarantees that the choice of $v$ is valid.

 \section{Existence of Solutions}
In this section we prove the two existence results presented in Section \ref{sec:setting}.

\noindent \textbf{Proof of Theorem \ref{thm:existence}.}
We start by noting that since $g\in L^{\alpha+1}(\Omega_T)$ and since $\partial_t g \in L^{\alpha+1}(\Omega_T)$, $g$ has a representative in $C([0,T];L^{\alpha+1}(\Omega))$. In the following we will always work with the time-continuous representative of $g$. Note also that the time-continuous representative of $g$ satisfies $\norm{g(\cdot,t)}_{L^\infty(\Omega)} \leq \norm{g}_{L^\infty(\Omega_T)}$ for every $t\in [0,T]$. For the reader's convenience, the rest of the proof is divided into several steps. In order to make the presentation somewhat less technical, we first focus on the case $\alpha \in (0,1)$ and at the end of the proof we explain the modifications that are needed in order for the argument to work also in the case $\alpha \geq 1$.

\vspace{1mm}
\noindent \textbf{Step 1: Galerkin's method and local existence for the approximating system of ODEs.} Pick a sequence $(v_j)^\infty_{j=1}$ in $C^\infty_{\textnormal{o}}(\Omega)$ which is orthonormal in $L^2(\Omega)$ and whose linear combinations can approximate any element of $C^\infty_{\textnormal{o}}(\Omega)$ in the norm of $C^1(\bar \Omega)$. Then $(v_j)$ is also an orthonormal basis of $L^2(\Omega)$.  
Take a sequence $(\varepsilon_n)^\infty_{n=1} \subset (0,1)$ converging to zero. We will show that for each $n\in \N$, there are absolutely continuous functions $\xi^n_k \in C([0,T])$, $k\in \{1,\dots, n\}$ such that the function
 \begin{align*}
  u_n(x,t) = g(x,t) + \sum^n_{k=1} \xi^n_k(t) v_k(x) =: g(x,t) + w_n(x,t).
 \end{align*}
satisfies
\begin{align}\label{prob:finite-dim} 
 \left\{
\begin{array}{ll}
\alpha \int_\Omega (|u_n| + \varepsilon_n)^{\alpha - 1} \partial_t u_n v_m +  A(x, t, u_n, \nabla u_n) \cdot \nabla v_m \d x = \int_\Omega f(x,t)v_m \d x, 
\\
\quad \textnormal{for all } m\in \{1, \dots, n\}, \textnormal{ and for a.e. } t \in [0,T],
 \\
 u_n(x,0) = g(x,0) + \sum^n_{k=1} (\xi^n_0)^k v_k(x), \qquad  x \in \Omega,
\end{array}
\right.
\end{align}
where $\xi^n_0 \in \R^n$ in the last condition is chosen so that 
\begin{align*}
 u_n(\cdot,0) \xrightarrow[n\to\infty]{} u_0, \quad \textnormal{ in } L^{\alpha+1}(\Omega).
\end{align*}
Such $\xi^n_0$ exists since $C^\infty_{\textnormal{o}}(\Omega)$ is dense in $L^{\alpha+1}(\Omega)$ and since linear combinations of the vectors $v_j$ are dense in $C^\infty_{\textnormal{o}}(\Omega)$ w.r.t. the norm of $C^1(\bar \Omega)$ and hence also w.r.t the norm of $L^{\alpha+1}(\Omega)$. By introducing the maps
\begin{align*}
 &F: \R^n \times [0,T] \to \R^{n\times n}, 
 \\
 & F_{m, k}(\xi,t) := \alpha \int_\Omega \Big(|g(x,t) + \sum^n_{j=1} \xi_j v_j(x)| + \varepsilon_n \Big)^{\alpha -1} v_k(x) v_m(x) \d x,
 \\
 &K: \R^n\times[0,T]  \to \R^n, \quad K_m(\xi,t) := \alpha \int_\Omega \Big(|g(x,t) + \sum^n_{j=1} \xi_j v_j(x)| + \varepsilon_n \Big)^{\alpha -1} \partial_t g(x,t) v_m(x) \d x
 \\
 &G: \R^n\times[0,T]  \to \R^n, 
 \\
 & G_m(\xi, t) := \int_\Omega A\Big(x, t, g(x,t) + \sum^n_{j=1}\xi_j v_j(x), \nabla g(x,t) + \sum^n_{j=1} \xi_j \nabla v_j(x)\Big)\cdot \nabla v_m(x) \d x,
 \\
 &J: [0,T]\to \R^n, \quad J_m(t):= \int_\Omega f(x,t)v_m(x)\d x,
\end{align*}
we can rewrite \eqref{prob:finite-dim} as an initial value problem for a system of ODEs for the coefficient functions $\xi^n_k$:
\begin{align*} 
 \left\{
\begin{array}{ll}
F(\xi(t),t)\xi'(t) + K(\xi(t),t) + G(\xi(t),t) = J(t), \quad  \textnormal{for a.e. } t \in [0,T],
 \\
 \xi(0) = \xi^n_0.
\end{array}
\right.
\end{align*}
We first show that this system is solvable at least on some interval $[0,\delta]$. In order to do so, introduce the functions
\begin{align*}
 \theta_{\xi,t}:\Omega \to (0,\infty), \quad \theta_{\xi,t}(x) = \Big[\big|g(x,t) + \sum^n_{j=1}\xi_j v_j(x)\big| + \varepsilon_n\Big]^\frac{\alpha-1}{2}
\end{align*}
and note that $F_{m,k}$ can be written as
\begin{align*}
 F_{m, k}(\xi,t) = \alpha \big(  \theta_{\xi,t} v_k,  \theta_{\xi,t}v_m \big)_{L^2(\Omega)}.
\end{align*}
Since the functions $v_j$ are linearly independent and since the function $ \theta_{\xi,t}$ is strictly positive and bounded, the functions $ \theta_{\xi,t} v_k$ are also linearly independent functions in $L^2(\Omega)$ for a.e. $t$. Thus, $F(\xi,t)$ is invertible for a.e. $t$, and the system of ODEs may be written in the form
\begin{align}\label{syst:ODE}
 \left\{
\begin{array}{ll}
\xi'(t) = H(\xi(t),t), \quad \textnormal{for a.e. } t \in [0,T],
 \\
 \xi(0) = \xi^n_0, 
\end{array}
\right.
\end{align}
where
\begin{align}\label{def:H}
 H(\xi,t) := F(\xi,t)^{-1}(J(t) - K(\xi,t) - G(\xi,t)).
\end{align}
The system of ODEs is equivalent to the following integral equation:
\begin{align}\label{integ-eq}
 \xi(t) = \xi^n_0 + \int^t_0 H(\xi(s),s) \d s.
\end{align}
To see that this equation has a solution on some small time interval $[0,\delta]$ we will use the Schauder fixed point theorem. First, note that by the boundedness of $g$ and $v_j$, we have 
\begin{align}\label{est:theta-below}
 \theta_{\xi,t}(x) \geq c(1 + |\xi|)^\frac{\alpha-1}{2}, 
\end{align}
for a.e. $(x,t)$.
Note that for a.e. $t$, $F(\xi,t)$ is a symmetric and positive definite, and thus diagonalizable, with strictly positive eigenvalues. If $\lambda$ is an eigenvalue for $F(\xi,t)$, let $y\in \mathbb{S}^{n-1}$ be a corresponding eigenvector and note that we can use \eqref{est:theta-below} to estimate
\begin{align*}
 \lambda = y \cdot (\lambda y) = y \cdot F(\xi,t)y = \sum^n_{m,k=1} y_m \alpha \big(  \theta_{\xi,t} v_k,  \theta_{\xi,t}v_m \big)_{L^2(\Omega)} y_k = \alpha \int_\Omega \theta_{\xi,t}^2 \Big| \sum^n_{k=1} y_k v_k \Big|^2 \d x 
 \\
 \geq c(1+|\xi|)^{\alpha-1} \int_\Omega  \Big| \sum^n_{k=1} y_k v_k \Big|^2 \d x =  c(1+|\xi|)^{\alpha-1}|y|^2 = c(1+|\xi|)^{\alpha-1},
\end{align*}
where the penultimate step follows from the orthonormality of the functions $v_j$. Let $\Lambda$ be the greatest eigenvalue of the symmetric positive definite matrix $F(\xi,t)^{-1}$. Then $\Lambda>0$ and $\lambda := \Lambda^{-1}$ is an eigenvalue of $F(\xi,t)$ so we can estimate
\begin{align}\label{est:F-invers-op-norm}
 \norm{F(\xi,t)^{-1}}_{\textnormal{op}} = \Lambda = \frac{1}{\lambda}\leq c(1+|\xi|)^{1-\alpha}.
\end{align}
The definition of $K$ and the space of $\partial_t g$ show that 
\begin{align}\label{est:K-xi-t}
 |K(\xi, t)| \leq \mathcal{K}(t), \textnormal{ for some } \mathcal{K} \in L^{\alpha+1}([0,T]).
\end{align}
The structure condition \eqref{cond:structure2}, Young's inequality and the integrability properties of the spatial derivatives $\partial_k g$ show that 
\begin{align}\label{est:G-xi-t}
 |G(\xi,t)| \leq \mathcal{R}(t) + c(1+|\xi|)^{p_N}, \textnormal{ for some } \mathcal{R} \in L^1([0,T]).
\end{align}
Since also $J \in L^\frac{\alpha+1}{\alpha}([0,T])$ we see from the definition \eqref{def:H} and the estimates \eqref{est:F-invers-op-norm}, \eqref{est:K-xi-t} and \eqref{est:G-xi-t} that
\begin{align}\label{est:H}
 |H(\xi,t)| \leq (1 + |\xi|)^{1-\alpha}\mathcal{N}(t) + c(1 + |\xi|)^{p_N + 1 - \alpha}, \textnormal{ for some } \mathcal{N} \in L^1([0,T]).
\end{align}
Fix $r>0$, and set $\bar r := r + |\xi^n_0|$. Define 
\begin{align*}
 \mathcal{M}(t) := (1 + \bar r)^{1-\alpha}\mathcal{N}(t) + c(1 + \bar r)^{p_N + 1 - \alpha},
\end{align*}
where $c$ is as in \eqref{est:H}. Then $\mathcal{M}\in L^1([0,T])$ and 
\begin{align}\label{est:H-for-xi-in-ball}
 |H(\xi,t)| \leq \mathcal{M}(t) \textnormal{ for all } (\xi,t) \in \bar B(\xi^n_0,r) \times [0,T].
\end{align}
For $\delta \in (0,T)$ we define the set for functions
\begin{align*}
 E_\delta := \Big\{\gamma: [0,\delta]\to \bar B(\xi^n_0, r) \,|\, |\gamma(t_2) - \gamma(t_1)| \leq \int^{t_2}_{t_1} \mathcal{M}(t)\d t, \quad 0\leq t_1 < t_2 \leq \delta \Big\}.
\end{align*}
The set $E_\delta$ is evidently convex and closed $C([0,\delta])$. By the uniform equicontinuity of its members one can see using the Ascoli-Arzel\`a theorem that $E_\delta$ is also compact in $C([0,\delta])$. From \eqref{est:H-for-xi-in-ball} it follows that for sufficiently small $\delta$, the map 
\begin{align*}
 T: E_\delta \to E_\delta, \quad (T\gamma)(t) = \xi^n_0 + \int^t_0 H(\gamma(s),s) \d s,
\end{align*}
is well-defined. In order to see that $T$ is continuous with respect to the metric inherited from $C([0,\delta])$, suppose that $(\gamma^j)$ is a sequence in $E_\delta$ converging in the norm of $C([0,\delta])$ to $\gamma$. Especially the sequence converges pointwise. Note that 
\begin{align*}
 \norm{T\gamma^j - T\gamma}_{C([0,\delta])} = \sup_{t\in[0,\delta]} |T\gamma^j(t) - T\gamma(t)| \leq \int^t_0|H(\gamma^j(s),s) - H(\gamma(s),s)| \d s
 \\
 \leq \int^\delta_0|H(\gamma^j(s),s) - H(\gamma(s),s)| \d s.
\end{align*}
Note that the integrand in the last expression has the integrable majorant $2 \mathcal{M}$. Since $F$, $K$ and $G$ are continuous in $\xi$ for almost every $t$, the same is true for $H$ so by the pointwise convergence of $\gamma^j$ we may use the dominated convergence theorem to conclude that 
\begin{align*}
 \lim_{j\to\infty} \norm{T\gamma^j - T\gamma}_{C([0,\delta])} = 0,
\end{align*}
i.e. $T$ is continuous. Thus, by the Schauder fixed point theorem $T$ has a fixed point $\xi^n \in E_\delta$ which is therefore a solution to the integral equation  
\eqref{integ-eq}.

\vspace{3mm}
\noindent \textbf{Step 2: Extension and energy estimates for the approximating solutions.}
 In order to show that $\xi^n$ can be extended to all of $[0,T]$, it is sufficient to show that there is a fixed bounded set which contains the values of any solution to the system of ODEs. Note that for $a \in [0,\delta]$ we can write
\begin{align*}
 |\xi^n(a)|^2 = \int_\Omega |w_n(x,a)|^2 \d x &\leq \norm{w_n(\cdot, a)}_{L^\infty(\Omega)}^{1-\alpha} \int_\Omega |w_n(x,a)|^{\alpha +1 } \d x 
 \\
 &\leq c_n |\xi^n(a)|^{1-\alpha} \int_\Omega |u_n(x,a) - g(x,a)|^{\alpha +1 } \d x
 \\
 &\leq \tilde c_{n,\alpha} |\xi^n(a)|^{1-\alpha} \int_\Omega |u_n(x,a)|^{\alpha+1} + |g(x,a)|^{\alpha+1} \d x \d t,
\end{align*}
where the penultimate step follows from the fact that the functions $v_j$ are bounded. It follows that 
\begin{align}\label{est:xi-of-a}
 |\xi^n(a)| \leq c_{n,\alpha} \Big( \int_\Omega |u_n(x,a)|^{\alpha +1 } + |g(x,a)|^{\alpha+1} \d x \Big)^\frac{1}{\alpha+1},
\end{align}
for some constant $c_{n,\alpha}$. In order to find an upper bound for the integral on the right-hand side, we multiply the equation in \eqref{prob:finite-dim} with $\xi^n_m(t)$, sum over $m$ and integrate w.r.t. time from $0$ to $a$ to conclude
\begin{align}\label{eq-summed-integrated}
&\int^a_0 \int_\Omega \alpha (|u_n| + \varepsilon_n)^{\alpha - 1}u_n \partial_t u_n \d x \d t + \int^a_0\int_\Omega A(x, t, u_n, \nabla u_n) \cdot \nabla u_n \d x \d t 
\\
\notag &= \int^a_0 \int_\Omega \alpha (|u_n| + \varepsilon_n)^{\alpha - 1} \partial_t u_n g \d x \d t + \int^a_0\int_\Omega A(x, t, u_n, \nabla u_n) \cdot \nabla g \d x \d t
\\
\notag & \quad + \int^a_0 \int_\Omega f(u_n-g)\d x \d t.
\end{align}
Introduce the function
\begin{align*}
 \Gamma_n(u):= \alpha\int^u_0 (|s| + \varepsilon_n)^{\alpha -1}s \d s. 
\end{align*}
The boundedness of $u_n$ and the fact that $\partial_t u_n$ is integrable allow us to apply the chain rule for Sobolev functions and re-write the integrand in the first integral on the left-hand side of \eqref{eq-summed-integrated} as $\partial_t \Gamma_n(u_n)$. Therefore, using the fact that $g$ and hence also $u_n$ is time-continuous into $L^{\alpha+1}(\Omega)$ one can see that 
\begin{align}\label{gamma-der-integrated}
 \int^a_0 \int_\Omega \alpha (|u_n| + \varepsilon_n)^{\alpha - 1}u_n \partial_t u_n \d x \d t = \int_\Omega \Gamma_n(u_n(x,a)) \d x - \int_\Omega \Gamma_n(u_n(x,0)) \d x.
\end{align}
Combining \eqref{gamma-der-integrated} with \eqref{eq-summed-integrated} and estimating the second integral on the left-hand side of \eqref{eq-summed-integrated} from below by using the structure condition \eqref{cond:structure1} we end up with
\begin{align}\label{beetcheeuutchee}
 \notag \int_\Omega  \Gamma_n(u_n(x, a))  \d x &+  c \int^a_0 \int_\Omega \sum^N_{j=1}  |\partial_j u_n|^{p_j} \d x \d t   
  \leq \int_\Omega \Gamma_n(u_n(x, 0)) \d x + \int^a_0 \int_\Omega \tilde a \d x\d t 
\\
 & + \int^a_0\int_\Omega f(u_n - g)\d x \d t + \int^a_0 \int_\Omega \alpha (|u_n| + \varepsilon_n)^{\alpha - 1} \partial_t u_n g \d x \d t 
 \\
 \notag & + \sum^N_{j=1} \int^a_0 \int_\Omega |A_j(x,t,u_n,\nabla u_n)||\partial_j g| \d x \d t.
\end{align}
Using Young's inequality with $\varepsilon$ and the structure condition \eqref{cond:structure2} on the integrands in the sum on the last row, we have 
\begin{align*}
 |A_j(x,t,u_n,\nabla u_n)||\partial_j g| &\leq \varepsilon |A_j(x,t,u_n,\nabla u_n)|^{p_j'} + c_\varepsilon |\partial_j g|^{p_j} 
 \\
 &\leq c\varepsilon \Big( \sum^N_{k=1} |\partial_k u_n|^{p_k} + \tilde b \Big) + c_\varepsilon |\partial_j g|^{p_j}.
\end{align*}
Choosing $\varepsilon$ small, the previous estimate allows us to include all the terms involving spatial derivatives of $u_n$ in the corresponding term on the left-hand side of \eqref{beetcheeuutchee} to obtain 
\begin{align}\label{beetcheeuutchee2}
 &\int_\Omega \Gamma_n(u_n(x,a)) \d x + c \int^a_0 \int_\Omega \sum^N_{j=1} |\partial_j u_n|^{p_j} \d x \d t   
  \leq  \int_\Omega \Gamma_n(u_n(x, 0)) \d x
\\
\notag & + \int^a_0 \int_\Omega \alpha (|u_n| + \varepsilon_n)^{\alpha - 1} \partial_t u_n g \d x \d t + c\sum^N_{j=1} \int^a_0 \int_\Omega |\partial_j g|^{p_j} \d x \d t + c|\Omega|
\\
\notag & + \int^a_0\int_\Omega f(u_n - g)\d x \d t + \int^a_0 \int_\Omega \tilde a + \tilde b \d x\d t.
\end{align}
Since $\alpha \in (0,1)$ we can estimate
\begin{align}\label{est:Gamma-lower}
 \Gamma_n(u) \geq \alpha\int^u_0 (|u| + \varepsilon_n)^{\alpha -1}s \d s = \frac\alpha2 (|u| + \varepsilon_n)^{\alpha -1} u^2 \geq c\chi_{\{|u| \geq 1\}} |u|^{\alpha+1}.
\end{align}
Moreover, we have 
\begin{align}\label{est:Gamma-upper}
 \Gamma_n(u) &= \alpha\int^u_0 (|s| + \varepsilon_n)^{\alpha -1}s \d s = \alpha\int^u_0 (|s| + \varepsilon_n)^{\alpha} \frac{s}{|s| + \varepsilon_n} \d s \leq \alpha\int^{|u|}_0 (s + \varepsilon_n)^{\alpha} \d s 
 \\
 \notag &= \tfrac{\alpha}{\alpha + 1}\big( (|u| + \varepsilon_n)^{\alpha+1} - \varepsilon_n^{\alpha+1}\big)\leq c(|u|^{\alpha+1} + 1).
\end{align}
Using \eqref{est:Gamma-lower} we see that 
\begin{align}\label{Lapha_plus_one-at-a}
 \int_\Omega |u_n(x,a)|^{\alpha+1} \d x \leq |\Omega| + \int_{\Omega\cap \{|u_n|\geq 1\}}|u_n(x,a)|^{\alpha+1} \d x \leq |\Omega| + c\int_\Omega \Gamma_n(u_n(x,a)) \d x.
\end{align}
From \eqref{est:Gamma-upper} obtain the estimate
\begin{align}\label{Gamma-at-0}
  \int_\Omega \Gamma_n(u_n(x, 0)) \d x \leq c (\norm{u_n(\cdot, 0)}_{L^{\alpha+1}(\Omega)}^{\alpha+1}  + |\Omega|) \leq C(u_0) + c|\Omega| < \infty,
\end{align}
where we have used the fact that the sequence of the norms $\norm{u_n(\cdot, 0)}_{L^{\alpha+1}(\Omega)}$ remains bounded by a constant due to our choice of the coefficients $\xi^n_0$.
Combining \eqref{beetcheeuutchee2}, \eqref{Lapha_plus_one-at-a} and \eqref{Gamma-at-0} we end up with
\begin{align}\label{est:almost-energy}
 c\int_\Omega & |u_n(x,a)|^{\alpha+1} \d x + c\int^a_0 \int_\Omega \sum^N_{j=1} |\partial_j u_n|^{p_j} \d x \d t \leq C(u_0) + C|\Omega|
 \\
 \notag &+ \int^a_0 \int_\Omega \alpha (|u_n| + \varepsilon_n)^{\alpha - 1} \partial_t u_n g \d x \d t
  +  C\sum^N_{j=1} \int^a_0 \int_\Omega |\partial_j g|^{p_j} \d x \d t 
  \\
  \notag & +\int^a_0\int_\Omega f(u_n - g)\d x \d t + \int^a_0 \int_\Omega \tilde a + \tilde b \d x\d t.
\end{align}
In order to estimate the first integral on the right-hand side, we introduce the function 
\begin{align*}
 B_n(u) := \alpha\int^u_0 (|s| + \varepsilon_n)^{\alpha-1}\d s.
\end{align*}
Using the boundedness of $u_n$, the integrability of $\partial_t u_n$ and the time-continuity of $g$ one can justify the chain rule and integration by parts in the second term on the right-hand side of \eqref{est:almost-energy} as follows:
\begin{align}\label{derB-g-integrated}
 \notag \int^a_0 \int_\Omega \alpha (|u_n| + \varepsilon_n)^{\alpha - 1} \partial_t u_n g \d x \d t =& \int^a_0 \int_\Omega \partial_t B_n(u_n) g \d x \d t = \int_\Omega B_n(u_n(x,a))g(x,a)\d x 
 \\
 &- \int_\Omega B_n(u_n(x,0))g(x,0)\d x - \int^a_0 \int_\Omega B_n(u_n) \partial_t g \d x \d t.
\end{align}
In order to proceed we show some algebraic properties of $B_n$ which will also be used later in the proof. Note that for $\alpha \in (0,1)$ we have by Lemma \ref{lem:elementary_real} that
\begin{align}\label{B_n-ab-diff}
 |B_n(b) - B_n(a)| = \Big| \alpha \int^b_a (|s|+ \varepsilon_n)^{\alpha-1} \d s\Big| \leq \Big| \int^b_a \alpha |s|^{\alpha-1}\d s\Big| &= \big||b|^{\alpha-1}b - |a|^{\alpha-1}a\big|
 \\
 \notag &\leq c|b - a|^\alpha.
\end{align}
Especially, taking $b=0$ we have
\begin{align}\label{B_n-upperbound}
 |B_n(a)| \leq |a|^\alpha.
\end{align}
This estimate and Young's inequality show that
\begin{align}\label{shsh}
 \Big|\int_\Omega B_n(u_n(x,a))g(x,a)\d x \Big| &\leq \int_\Omega |u_n(x,a)|^\alpha |g(x,a)| \d x 
 \\
 \notag &\leq \varepsilon \int_\Omega |u_n(x,a)|^{\alpha+1} \d x + c_\varepsilon \int_\Omega |g(x,a)|^{\alpha+1}\d x,
\end{align}
and a similar estimate (without $\varepsilon$) holds for the integral involving $B_n(u_n(x,0))$. Similarly, we have
\begin{align}\label{tttt}
 \Big|\int^a_0 \int_\Omega B_n(u_n) \partial_t g \d x \d t\Big| &\leq \int^a_0 \int_\Omega |u_n|^\alpha |\partial_t g| \d x \d t 
 \\
 \notag &\leq \tilde \varepsilon \int^a_0 \int_\Omega |u_n|^{\alpha + 1} \d x \d t + c_{\tilde \varepsilon} \int^a_0 \int_\Omega  |\partial_t g|^{\alpha+1} \d x \d t
 \\
 \notag &\leq \tilde \varepsilon T \sup_{t\in[0,\delta]}\int_\Omega |u_n(x,t)|^{\alpha+1} \d x + c_{\tilde \varepsilon} \int^a_0 \int_\Omega  |\partial_t g|^{\alpha+1} \d x \d t.
\end{align}
Similarly, for the last term on the right-hand side of \eqref{est:almost-energy} we have by Young's inequality
\begin{align}\label{ung_olikhet}
 \notag \int^a_0\int_\Omega & f(u_n - g)\d x \d t \leq \int^a_0\int_\Omega |f||u_n| \d x \d t + \iint_{\Omega_T}|f g| \d x \d t 
 \\
 &\leq \tilde \varepsilon T \sup_{t\in[0,\delta]}\int_\Omega |u_n(x,t)|^{\alpha+1} \d x + c_{\tilde \varepsilon} \iint_{\Omega_T} |f|^\frac{\alpha+1}{\alpha}\d x \d t + \iint_{\Omega_T}|f g| \d x \d t 
 \\
 \notag & \leq \tilde \varepsilon T \sup_{t\in[0,\delta]}\int_\Omega |u_n(x,t)|^{\alpha+1} \d x + \hat c_{\tilde \varepsilon} \iint_{\Omega_T} |f|^\frac{\alpha+1}{\alpha}\d x \d t + T \sup_{t\in [0,T]}\int_\Omega|g(x,t)|^{\alpha+1} \d x
\end{align}
We can thus combine \eqref{shsh}, \eqref{tttt}, \eqref{ung_olikhet} and \eqref{derB-g-integrated} with \eqref{est:almost-energy} to conclude
\begin{align}\label{est:nearly-energy}
 c\int_\Omega & |u_n(x,a)|^{\alpha+1} \d x + c\int^a_0 \int_\Omega \sum^N_{j=1} |\partial_j u_n|^{p_j} \d x \d t \leq  C\sup_{t\in[0,T]} \int_\Omega |g(x,t)|^{\alpha+1}\d x
 \\
 \notag &+ 2\tilde \varepsilon T \sup_{t\in[0,\delta]}\int_\Omega |u_n(x,t)|^{\alpha+1} \d x + c_{\tilde \varepsilon} \int^a_0 \int_\Omega  |\partial_t g|^{\alpha+1} \d x \d t
  +  C\sum^N_{j=1} \int^a_0 \int_\Omega |\partial_j g|^{p_j} \d x \d t
  \\
  \notag & + c_{\tilde \varepsilon} \iint_{\Omega_T} |f|^\frac{\alpha+1}{\alpha}\d x \d t + C\iint_{\Omega_T} \tilde a + \tilde b \,\d x\d t + C(u_0) + C|\Omega|.
\end{align}
where we have chosen $\varepsilon$ so small that the first term on the last row of \eqref{shsh} can be absorbed to the left-hand side of \eqref{est:almost-energy}. Since both terms on the left-hand side are nonnegative, we can take the supremum over $a \in [0,\delta]$ individually to obtain
\begin{align*}
 c\sup_{a\in[0,\delta]}\int_\Omega & |u_n(x,a)|^{\alpha+1} \d x + c\int^\delta_0 \int_\Omega \sum^N_{j=1} |\partial_j u_n|^{p_j} \d x \d t \leq  C \sup_{t\in[0, T]} \int_\Omega |g(x,t)|^{\alpha+1}\d x
 \\
 \notag &+ 2 \tilde \varepsilon T \sup_{t\in[0,\delta]}\int_\Omega |u_n(x,t)|^{\alpha+1} \d x + c_{\tilde \varepsilon} \iint_{\Omega_T}  |\partial_t g|^{\alpha+1} \d x \d t
  +  C\sum^N_{j=1} \iint_{\Omega_T} |\partial_j g|^{p_j} \d x \d t
   \\
  \notag & + c_{\tilde \varepsilon} \iint_{\Omega_T} |f|^\frac{\alpha+1}{\alpha}\d x \d t + C\iint_{\Omega_T} \tilde a + \tilde b \,\d x\d t +C(u_0) + C|\Omega|.
\end{align*}
Taking $\tilde \varepsilon$ sufficiently small, the term where it appears as a factor can be absorbed into the left-hand side  so that 
\begin{align}\label{est:basically-energy}
 \notag c\sup_{a\in[0,\delta]}\int_\Omega & |u_n(x,a)|^{\alpha+1} \d x + c\int^\delta_0 \int_\Omega \sum^N_{j=1} |\partial_j u_n|^{p_j} \d x \d t \leq  C\sup_{t\in[0, T]} \int_\Omega |g(x,t)|^{\alpha+1}\d x
 \\
  &  + C \iint_{\Omega_T} |f|^\frac{\alpha+1}{\alpha} + |\partial_t g|^{\alpha+1}
  +  \sum^N_{j=1}  |\partial_j g|^{p_j} + \tilde a + \tilde b \,\d x\d t + C(u_0) + C|\Omega|.
\end{align}
As the second integral on the left-hand side is nonnegative we can combine \eqref{est:basically-energy} with \eqref{est:xi-of-a} to obtain
\begin{align*}
 |\xi(a)| \leq C(n, g, \partial_t g, \nabla g, u_0,f, |\Omega|) < \infty, \quad a \in [0,\delta].
\end{align*}
This is the desired boundedness estimate which confirms that $\xi^n$ can indeed be extended to $[0,T]$. Thus, we may take $\delta=T$ in \eqref{est:basically-energy} and write
\begin{align}\label{est:u_n-energy}
 \notag \sup_{t\in[0,T]} &\int_\Omega |u_n(x,t)|^{\alpha+1}\d x + \iint_{\Omega_T} \sum^N_{j=1} |\partial_j u_n|^{p_j} \d x \d t \leq C(u_0) + C|\Omega| + C\iint_{\Omega_T} |f|^\frac{\alpha+1}{\alpha}\d x \d t
 \\
  & + C\Big[\sup_{t\in[0, T]} \int_\Omega |g(x,t)|^{\alpha+1}\d x +  \iint_{\Omega_T}  |\partial_t g|^{\alpha+1} 
  +  \sum^N_{j=1}  |\partial_j g|^{p_j} + \tilde a + \tilde b \,\d x\d t\Big],
\end{align}
which is the desired energy estimate.

\vspace{3mm}
\noindent \textbf{Step 3: Weak and strong convergence of the approximating solutions.} 
The energy estimate \eqref{est:u_n-energy} shows that $(u_n)$ is a bounded sequence in $L^{\alpha+1}(\Omega_T)$ and that for each $j\in \{1,\dots, N\}$ the sequence $(\partial_j u_n)$ is bounded in $L^{p_j}(\Omega_T)$. Moreover, this fact combined with the structure condition \eqref{cond:structure2} shows that $(A_j(x, u_n, \nabla u_n))$ is a bounded sequence in $L^{p_j'}(\Omega_T)$ for each $j\in \{1,\dots, N\}$. Therefore, by passing to a subsequence (still labelled $u_n$) we may assume that
\begin{enumerate}
 \item The sequence $(u_n)^\infty_{n=1}$ converges weakly in $L^{\alpha+1}(\Omega_T)$ to a limit function $u$.
 \item For each $j\in \{1, \dots, N\}$ the sequence $(\partial_j u_n)^\infty_{n=1}$ converges weakly in $L^{p_j}(\Omega_T)$ to the weak derivative $\partial_j u$. 
 \item For each $j\in \{1, \dots, N\}$ the sequence $\big(A_j(x, t, u_n, \nabla u_n)\big)^\infty_{n=1}$ converges weakly in $L^{p_j'}(\Omega_T)$ to a limit function $\mathcal{A}_j$.
\end{enumerate}
We now intend to pass to yet another subsequence in order to obtain strong convergence of the sequence $(u_n)$ in $L^q(\Omega_T)$ for some $q>1$, and also pointwise convergence a.e. To do this, we follow the method of Laptev \cite{La} which is based on a compactness result proved by Sobolev, see Theorem 1.4.3 in \cite{So}. Examining the theorem, we see that since $\Omega_T$ is bounded and since $(u_n)$ is a bounded sequence in $L^{\alpha+1}(\Omega_T)$, it is sufficient to show that
\begin{align}\label{cond:compactness}
 \lim_{|(y,h)|\to 0} \sup_{n\in \N} \iint_{\Omega_T} |u_n(x+y, t+h) - u_n(x,t)|^q \d x \d t = 0,
\end{align}
where $u_n$ is extended as zero outside $\Omega_T$. It is sufficient to consider translations in time and space separately. 
To treat translations in time we first note that for any $a,b\in \R$, recalling that $\varepsilon_n<1$, we have
\begin{align}\label{est:B}
 (B_n(b) - B_n(a))(b-a) &= \int^1_0 \frac{\d }{\d s}B_n(a + s(b-a)) \d s (b-a) 
 \\
 \notag &= \int^1_0 \alpha (|a+s(b-a)|+\varepsilon_n)^{\alpha-1} \d s (b-a)^2 
 \\
 \notag &\geq c(|a| + |b| + 1)^{\alpha-1}(b-a)^2.
\end{align}
Thus, by using H\"older's inequality with the exponents $2/(\alpha+1)$ and $2/(1-\alpha)$ and setting $\nu= (1-\alpha)(1+\alpha)/2$ we have 
\begin{align}\label{est:Lalphaplusone-diff}
 \int_\Omega & |u_n(x,t+h) - u_n(x,t)|^{\alpha+1} \d x 
 \\
\notag  &= \int_\Omega (|u_n(x,t+h)| + |u_n(x,t)| + 1)^\nu 
 \\
 \notag & \qquad \times (|u_n(x,t+h)| + |u_n(x,t)| + 1)^{-\nu} |u_n(x,t+h) - u_n(x,t)|^{\alpha+1} \d x 
 \\
 \notag &\leq  \Big[ \int_\Omega (|u_n(x,t+h)| + |u_n(x,t)| + 1)^{\alpha-1} |u_n(x,t+h) - u_n(x,t)|^2  \d x\Big]^\frac{\alpha+1}{2}
 \\
 \notag & \quad \times \Big[ \int_\Omega (|u_n(x,t+h)| + |u_n(x,t)| + 1)^{\alpha+1} \d x\Big]^\frac{1-\alpha}{2} 
 \\
 \notag & \leq c\Big[ \int_\Omega \big(B_n(u_n(x,t+h)) - B_n(u_n(x,t))\big) \big(u_n(x,t+h) - u_n(x,t)\big)  \d x\Big]^\frac{\alpha+1}{2}
 \\
 \notag &= c\Big[ \int_\Omega \big(B_n(u_n(x,t+h)) - B_n(u_n(x,t))\big) \big(w_n(x,t+h) - w_n(x,t)\big)  \d x
\\
\notag & \qquad \qquad + \int_\Omega \big(B_n(u_n(x,t+h)) - B_n(u_n(x,t))\big) \big(g(x,t+h) - g(x,t) \d x\Big]^\frac{\alpha+1}{2}
\end{align}
where in the penultimate step we have used \eqref{est:B} and the boundedness of $u_n$ on $[0,T]$ with respect to the norm of $L^{\alpha+1}(\Omega)$, which follows from \eqref{est:u_n-energy}. In order to treat the first integral inside the square brackets, we note that by \eqref{prob:finite-dim} we have for $v\in V_n$ and $t\in (0, T-h)$ that
\begin{align*}
\int_\Omega \big(B_n(u_n(x,t+h)) - &B_n(u_n(x,t))\big) v(x)\d x = \int_\Omega \int^h_0 \frac{\d }{\d s}B(u_n(x, t + s)) \d s v(x) \d x
 \\
 & =  \int^h_0 \alpha \int_\Omega (|u_n(x, t+s)| + \varepsilon_n)^{\alpha-1}\partial_t u_n(x, t+s) v(x) \d x \d s
 \\
 & = - \int^h_0 \int_\Omega A(x, t + s, u_n(x,t+s), \nabla u_n(x, t + s)) \cdot \nabla v(x)\d x \d s
 \\
 & \quad + \int^h_0 \int_\Omega f(x, t + s)v(x) \d x.
\end{align*}
Especially, the last equation holds for the choice $v= w_n(x,t+h) - w_n(x,t)$.
This allows us to combine the last equation with \eqref{est:Lalphaplusone-diff} to conclude that
\begin{align}\label{aaa}
\notag \int^{T-h}_0 \int_\Omega &|u_n(x ,t+h)  - u_n(x,t)|^{\alpha+1} \d x \d t \leq 
\\
\notag c \Big[\int^h_0 \hspace{-1mm}\iint_{\Omega_{T-h}} \hspace{-3.5mm}&|A(x, t + s, u_n(x,t+s), \nabla u_n(x, t + s)) \cdot (\nabla w_n(x,t+h) - \nabla w_n(x,t))|\d x \d t \d s\Big]^\frac{\alpha+1}{2}
\\
& + c\int^{T-h}_0\Big[\int_\Omega \big|[B_n(u_n(x,t+h)) - B_n(u_n(x,t))] [g(x,t+h) - g(x,t)]\big| \d x \Big]^\frac{\alpha+1}{2}\d t
\\
\notag & + c \Big[\int^h_0 \iint_{\Omega_{T-h}} |f(x,t+s)||w_n(x,t+h) - w_n(x,t)|\d x \d t    \Big]^\frac{\alpha+1}{2}.
\end{align}
We need to prove that the three integrals on the right-hand side converge to zero as $h\to 0$ uniformly in $n$. To treat the first integral it is sufficient to note that the sequence of partial derivatives $(\partial_j w_n)$ and the functions $(A_j(x, t, u_n, \nabla u_n))$ are bounded in H\"older spaces of matching exponents which means that the inner integral is bounded by a constant independent of $n$. The integral over $[0,h]$ produces a factor of $h$ so that all in all,
\begin{align*}
 \int^h_0 \hspace{-1mm}\iint_{\Omega_{T-h}} \hspace{-3mm}|A(x, t + s, u_n(x,t+s), \nabla u_n(x, t + s)) \cdot (\nabla w_n(x,t+h) - \nabla w_n(x,t))|\d x \d t \d s \leq c h,
\end{align*}
where $C$ is independent of $n$. Similarly, due to matching H\"older exponents in the last integral on the right-hand side and the boundedness of the sequence $(w_n)$ in $L^{\alpha+1}(\Omega_T)$ we have
\begin{align*}
 \int^h_0 \iint_{\Omega_{T-h}} |f(x,t+s)||w_n(x,t+h) - w_n(x,t)|\d x \d t \leq ch.
\end{align*}
To treat the second integral on the right-hand side of \eqref{aaa} we use \eqref{B_n-upperbound} and H\"older's inequality as follows:
\begin{align*}
 &\int^{T-h}_0 \Big[\int_\Omega \big|[B_n(u_n(x,t+h)) - B_n(u_n(x,t))] [g(x,t+h) - g(x,t)]\big| \d x \Big]^\frac{\alpha+1}{2}\d t
 \\
 &\leq c \Big[ \iint_{\Omega_{T-h}}\hspace{-2mm} |u_n(x,t+h)|^{\alpha+1} + |u_n(x,t)|^{\alpha+1} \d x \d t \Big]^\frac{\alpha}{2}   \Big[ \iint_{\Omega_{T-h}}\hspace{-2mm} |g(x,t+h) - g(x,t)|^{\alpha+1} \d x \d t \Big]^\frac12
 \\
 &\leq C  \Big[ \iint_{\Omega_{T-h}} |g(x,t+h) - g(x,t)|^{\alpha+1} \d x \d t \Big]^\frac12 \xrightarrow[h\to 0]{} 0.
\end{align*}
where the penultimate step follows from the fact that $(u_n)$ is a bounded sequence in $L^{\alpha+1}(\Omega_T)$, so that the constant $C$ is independent of $n$. The last step follows from the fact that $g$ is in $C([0,T];L^{\alpha+1}(\Omega))$ and since $[0,T]$ is compact, $g$ is also uniformly continuous $[0,T]\to L^{\alpha+1}(\Omega)$. Thus, we have confirmed that 
\begin{align}\label{time-translation}
 \lim_{h\to 0} \sup_{n\in \N} \int^{T-h}_0 \int_\Omega |u_n(x ,t+h)  - u_n(x,t)|^{\alpha+1} \d x \d t = 0.
\end{align}
For translations in space, it is sufficient to consider each coordinate direction separately. Recalling that $w_n$ vanishes outside $\Omega_T$ we can see that extending $u_n$ as zero outside of $\Omega_T$ means extending $g$ as zero outside $\Omega_T$ so that for all $(x,t)\in \Omega_T$, 
\begin{align*}
|u_n(x+ h e_j,t) - u_n(x,t)|^{p_j} \leq c|g(x+h e_j,t) - g(x,t)|^{p_j} + c|w_n(x+h e_j,t) - w_n(x,t)|^{p_j}.  
\end{align*}
By the regularity of the functions $v_j$ we have that we can use the classical fundamental theorem of calculus to estimate
\begin{align*}
 |w_n(x+h e_j,t) - w_n(x,t)|^{p_j} = \Big| \int^h_0 \partial_j w_n(x + se_j, t)\d s\Big|^{p_j} \hspace{-0.5mm} \leq h^{p_j-1} \hspace{-1mm}\int^h_0 |\partial_j w_n(x, t+ s e_j)|^{p_j}\d s.
\end{align*}
Integrating over $\Omega_T$ we have 
\begin{align*}
 \iint_{\Omega_T} |w_n(x+h e_j,t) - w_n(x,t)|^{p_j} \d x \d t \leq h^{p_j-1}\int^h_0 \iint_{\Omega_T}|\partial_j w_n(x, t+ s e_j)|^{p_j}\d x \d t \d s \leq ch^{p_j},
\end{align*}
where the last step follows from the fact that for every $s \in [0,h]$ the inner integral can be bounded by the $L^{p_j}(\Omega_T)$-norm of $\partial_j w_n$, which is bounded by a constant independent of $n$ since $\partial_j w_n = \partial_j u_n - \partial_j g$ and since $(\partial_j u_n)$ is a bounded sequence in $L^{p_j}(\Omega_T)$. Thus, we have seen that
\begin{align*}
 \iint_{\Omega_T}|u_n(x+ h e_j,t) - u_n(x,t)|^{p_j}\d x \d t \leq \iint_{\Omega_T} |g(x+h e_j,t) - g(x,t)|^{p_j} \d x \d t + ch^{p_j}.
\end{align*}
Since $g$ is bounded and especially $L^{p_j}$-integrable, we have 
\begin{align*}
 \lim_{h\to 0} \iint_{\Omega_T} |g(x+h e_j,t) - g(x,t)|^{p_j} \d x \d t = 0,
\end{align*}
so the last two estimates confirm that
\begin{align}\label{space-translation}
 \lim_{h\to 0} \sup_{n\in \N} \iint_{\Omega_T} |u_n(x + h e_j ,t)  - u_n(x,t)|^{p_j} \d x \d t = 0.
\end{align}
Combining \eqref{time-translation} and \eqref{space-translation} we have \eqref{cond:compactness} with $q= \min\{\alpha+1, p_j\}$, which means that a subsequence, still labelled $(u_n)$, converges strongly in $L^q(\Omega_T)$ and also pointwise a.e. after passing to yet another subsequence.

\vspace{3mm}
\noindent \textbf{Step 4: Equations satisfied by the limit function.}
For $\varphi \in C^\infty_\textnormal{o}((0,T))$ and $v \in \cup^\infty_{n=1} V_n$ we  have by \eqref{prob:finite-dim} for sufficiently large $n$ that 
\begin{align*}
 \iint_{\Omega_T} \alpha (|u_n| + \varepsilon_n)^{\alpha - 1}\partial_t u_n \varphi(t) v(x) \d x \d t + \iint_{\Omega_T} A(x, t, u_n, \nabla u_n) \cdot \nabla v(x) \varphi(t) \d x \d t
 \\
 = \iint_{\Omega_T} f(x,t) v(x)\varphi(t) \d x \d t.
\end{align*}
Using integration by parts in the first integral, we end up with
\begin{align}\label{chemistryclass}
 -\iint_{\Omega_T} B_n(u_n(x,t))  \varphi'(t) v(x) \d x \d t + \iint_{\Omega_T} A(x, t, u_n, \nabla u_n) \cdot \nabla v(x) \varphi(t) \d x \d t 
 \\
 \notag = \iint_{\Omega_T} f(x,t) v(x)\varphi(t) \d x \d t.
\end{align}
We intend to pass to the limit $n \to \infty$. In order to treat the first integral we estimate
\begin{align}\label{B-diff1}
|B_n(b) - |a|^{\alpha-1}a| \leq |B_n(b) - B_n(a)| + |B_n(a) - |a|^{\alpha-1}a|.
\end{align}
The first expression on the right-hand side can be estimated upwards using \eqref{B_n-ab-diff} whereas the second term can be estimated as follows:
\begin{align}\label{B_n-alpha-diff}
 |B_n(a) - |a|^{\alpha-1}a| &= | (|a| + \varepsilon_n)^\alpha \sgn(a) - \varepsilon_n^\alpha - |a|^{\alpha-1}a| 
 \\
 \notag &\leq |(|a| + \varepsilon_n)^\alpha \sgn(a) - |a|^{\alpha-1}a| + \varepsilon_n^\alpha
 \\
 \notag &= |(|a| + \varepsilon_n)^\alpha - |a|^\alpha| + \varepsilon_n^\alpha
 \\
 \notag &\leq 2 \varepsilon_n^\alpha.
\end{align}
where in the last step we have used Lemma \ref{lem:elementary_real} with $\gamma=1/\alpha > 1$. Thus, we end up with
\begin{align}\label{est:B2}
 |B_n(b) - |a|^{\alpha-1}a| \leq |b-a|^\alpha + 2 \varepsilon^\alpha_n.
\end{align}
By \eqref{B_n-upperbound} we have that $(B_n(u_n))$ is a bounded sequence in $L^\frac{\alpha+1}{\alpha}(\Omega_T)$ so by passing to a subsequence we may assume that it converges weakly in this space to some limit function $\mathcal{B} \in L^\frac{\alpha+1}{\alpha}(\Omega_T)$. The pointwise convergence of $u_n$ to $u$ and \eqref{est:B2} imply that $(B_n(u_n))$ converges pointwise to $|u|^{\alpha-1}u$, and thus we have $\mathcal{B} = |u|^{\alpha-1}u$. 
Furthermore, each function $A_j(x,t,u_n,\nabla u_n)$ converges weakly to $\mathcal{A}_j$ so passing to the limit $n\to \infty$ in \eqref{chemistryclass} we obtain
\begin{align} \label{half-limit}
 -\iint_{\Omega_T} |u|^{\alpha-1}u(x,t) \varphi'(t) v(x) \d x \d t + \iint_{\Omega_T} \mathcal{A} \cdot \nabla v(x) \varphi(t) \d x \d t
 = \iint_{\Omega_T} f(x,t) v(x)\varphi(t) \d x \d t.
\end{align}
By density, we see that the previous equation holds for all $v\in V$, and due to the properties of the sequence $(v_j)^\infty_{j=1}$ the equation holds also for all $v\in C^\infty_{\textnormal{o}}(\Omega)$. We now show that \eqref{half-limit} implies that $u$ satisfies a corresponding result with test functions in $C^\infty_{\textnormal{o}}(\Omega_T)$. For this purpose, let $\eta \in C^\infty_{\textnormal{o}}((-1,1))$ be a bump function used in the construction of $1$-dimensional mollifiers and set
\begin{align*}
 u^\alpha_\varepsilon(x,t):= \varepsilon^{-1}\int^{\varepsilon}_{-\varepsilon} |u|^{\alpha-1}u(x,t-s)\eta(\varepsilon^{-1}s)\d s,
\end{align*}
that is, the convolution is taken only w.r.t. the time variable. This function is well-defined on $\Omega\times(\varepsilon, T-\varepsilon)$. Hence, given $\psi \in C^\infty_{\textnormal{o}}(\Omega_T)$ we have 
\begin{align*}
 \iint_{\Omega_T} |u|^{\alpha-1}u(x,t) \partial_t \psi(x,t)\d x \d t = \lim_{\varepsilon\to 0} \iint_{\Omega_T} u^\alpha_\varepsilon(x,t) \partial_t \psi(x,t) \d x \d t.
\end{align*}
where the integral on the right-hand side is well-defined for small $\varepsilon$. Using Fubini's theorem and classical integration by parts we may re-write the integral as
\begin{align*}
 \iint_{\Omega_T} u^\alpha_\varepsilon(x,t) \partial_t \psi(x,t) \d x \d t &= - \int_\Omega\int^T_0 |u|^{\alpha-1}u (x, \cdot)*\eta_\varepsilon'(t) \psi(x,t)\d t \d x
 \\
 &= -\int^T_0 \int_\Omega \int^\varepsilon_{-\varepsilon} |u|^{\alpha-1}u(x,t-s)\eta_\varepsilon'(s) \psi(x,t)\d s \d x \d t
 \\
&= - \int^T_0 \int_\Omega \int^{t+\varepsilon}_{t-\varepsilon} |u|^{\alpha-1}u(x,s)\eta_\varepsilon'(t-s) \psi(x,t)\d s \d x \d t
\\
&=  \int^T_0 \int_\Omega \int^{t+\varepsilon}_{t-\varepsilon} \mathcal{A}(x,s)\cdot \nabla \psi(x,t) \eta_\varepsilon (t-s) \d s \d x \d t
\\
& \quad - \int^T_0 \int_\Omega \int^{t+\varepsilon}_{t-\varepsilon} f(x,s) \eta_\varepsilon (t-s) \psi(x,t) \d s \d x \d t
\\
&\xrightarrow[\varepsilon\to 0]{} \iint_{\Omega_T} \mathcal{A}(x,t)\cdot \nabla \psi(x,t)\d x \d t - \iint_{\Omega_T}f(x,t) \psi(x,t)\d x \d t,
\end{align*}
where in the second last step we use \eqref{half-limit} with the choice $v(x) = \psi(x,t)$. Combining the last two equations we have that 
\begin{align}\label{eq-with-mathcalA-on-OmegaT}
 -\iint_{\Omega_T} |u|^{\alpha-1}u \partial_t \psi \d x \d t + \iint_{\Omega_T} \mathcal{A}\cdot \nabla \psi \d x \d t = \iint_{\Omega_T}f \psi\d x \d t,
\end{align}
for all $\psi \in C^\infty_{\textnormal{o}}(\Omega_T)$. 

\vspace{3mm}
\noindent \textbf{Step 5: Boundary values, time continuity and initial values.} Previously we obtained the function $u$ as the weak limit in $L^{\bf p}(0,T; \overline W^{1, {\bf p}}(\Omega))$ of a sequence $(u_n) $ contained in the convex closed set $g + L^{\bf p}(0,T; \overline W^{1, {\bf p}}_{\textnormal{o}}(\Omega))$. Since for a convex set the closure and the weak closure coincide, this means that $u\in g + L^{\bf p}(0,T; \overline W^{1, {\bf p}}_{\textnormal{o}}(\Omega))$, i.e. $u$ satisfies the correct boundary conditions.

In order to see that $u$ has a representative in $C([0,T];L^{\alpha+1}(\Omega))$, it suffices to note that $u$, $g$, $\mathcal{A}$ and $f$ satisfy the assumptions of Lemma \ref{lem:time-cont}. We are now ready to show that the time-continuous representative of $u$ satisfies $u(0)=u_0$, i.e. $u$ satisfies the correct initial value. Take a function $\zeta \in C^\infty([0,T])$ with support compactly contained in $[0,T)$ and which takes the value $1$ in a neighborhood of $0$. Take $v\in \cup^\infty_{n=1} V_n$ and use \eqref{eq-with-mathcalA-on-OmegaT} with the test function 
\begin{align}\label{test-funct-for-init-val}
\psi(x,t) = \mathcal{H}_\delta(t)\zeta(t) v(x),  
\end{align}
where $\mathcal{H}_\delta$ is defined as in \eqref{def:H_delta}. For small $\delta>0$ this leads to
\begin{align*}
 \delta^{-1} &\int^\delta_0 \int_\Omega |u|^{\alpha-1}u(x,t) v(x) \d x \d t = \iint_{\Omega_T} \mathcal{A}(x,t)\cdot \nabla v(x) \mathcal{H}_\delta(t) \zeta(t) \d x \d t 
 \\
 &\quad - \iint_{\Omega_T} |u|^{\alpha-1}u(x,t) \mathcal{H}_\delta(t) \zeta'(t) v(x) \d x \d t
 - \iint_{\Omega_T} f(x,t) \mathcal{H}_\delta(t)\zeta(t) v(x) \d x \d t.
\end{align*}
Due to the time-continuity of $u$ we may pass to the limit $\delta \to 0$ and conclude that
\begin{align}\label{hehu1}
 \int_\Omega |u|^{\alpha-1}u(x,0) v(x) \d x \d t &= \iint_{\Omega_T} \mathcal{A}(x,t)\cdot \nabla v(x)  \zeta(t) \d x \d t 
 \\
 \notag &\quad - \iint_{\Omega_T} |u|^{\alpha-1}u(x,t) \zeta'(t) v(x) \d x \d t - \iint_{\Omega_T} f(x,t) \zeta(t)v(x)\d x \d t.
\end{align}
Similarly, using \eqref{chemistryclass} with the choice $\varphi = \mathcal{H}_\delta \zeta$ we obtain
\begin{align*}
 \delta^{-1} &\int^\delta_0 \int_\Omega B_n(u_n(x,t)) v(x) \d x \d t = \iint_{\Omega_T} A(x,t,u_n,\nabla u_n)\cdot \nabla v(x) \mathcal{H}_\delta(t) \zeta(t) \d x \d t 
 \\
 & - \iint_{\Omega_T} B_n(u_n(x,t)) \mathcal{H}_\delta(t) \zeta'(t) v(x) \d x \d t - \iint_{\Omega_T} f(x,t) v(x) \mathcal{H}_\delta(t) \zeta(t) \d x \d t.
\end{align*}
Since $u_n \in C([0,T];L^{\alpha+1}(\Omega))$, passing to the limit $\delta\to 0$ poses no problem, and we end up with
\begin{align*}
 \int_\Omega B_n(u_n(x,0)) v(x) \d x &= \iint_{\Omega_T} A(x,t,u_n,\nabla u_n)\cdot \nabla v(x)  \zeta(t) \d x \d t
 \\
 &\quad - \iint_{\Omega_T} B_n(u_n(x,t)) \zeta'(t) v(x) \d x \d t - \iint_{\Omega_T} f(x,t) v(x) \zeta(t) \d x \d t.
\end{align*}
We now intend to pass to the limit $n\to \infty$. Due to \eqref{est:B2} and the strong convergence of $u_n(x,0)$ to $u_0$ in $L^{\alpha+1}(\Omega)$, the left-hand side poses no problem. Similarly, the weak convergence of $B_n(u_n)$ to $|u|^{\alpha-1}u$ in $L^\frac{\alpha+1}{\alpha}(\Omega_T)$ allows us to pass to the limit in the second integral on the right-hand side. Finally, taking into account also the weak convergence of $A(x,u_n,\nabla u_n)$ to $\mathcal{A}$ we have by passing to $n\to\infty$ that 
\begin{align*}
 \int_\Omega |u_0|^{\alpha-1}u_0 v \d x &= \iint_{\Omega_T} \mathcal{A}(x,t) \cdot \nabla v(x)  \zeta(t) \d x \d t
 - \iint_{\Omega_T} |u|^{\alpha-1}u(x,t)\zeta'(t) v(x) \d x \d t
 \\
 &\quad - \iint_{\Omega_T} f(x,t) \zeta(t) v(x)\d x \d t.
\end{align*}
Comparing this with \eqref{hehu1} we conclude that 
\begin{align}\label{integs-equal}
 \int_\Omega |u|^{\alpha-1}u(x,0) v(x) \d x = \int_\Omega |u_0|^{\alpha-1}u_0 v \d x,
\end{align}
for all $v\in \cup^\infty_{n=1} V_n$, and by density arguments \eqref{integs-equal} holds for all $v\in C^\infty_{\textnormal{o}}(\Omega)$ and even for all $v\in L^{\alpha+1}(\Omega)$ so that in fact $u(x,0) = u_0$, i.e. $u$ satisfies the correct initial value.

\vspace{3mm}
\noindent \textbf{Step 6: Identification of the weak limit.}
To complete the proof, in light of \eqref{eq-with-mathcalA-on-OmegaT} it only remains to show that $\mathcal{A} = A(x,u,\nabla u)$. For this identification, we proceed through the celebrated Minty's trick, using the monotonicity condition \eqref{cond:monot}. Indeed, this assumption implies that for all $v \in L^{{\bf p}}(0,T; W^{1,\bf{p}}(\Omega))$ the number
\begin{equation}\label{positive xn}
    X_n:= \iint_{\Omega_T} \big( A(x,t,u_n, \nabla u_n)- A(x,t, u_n, \nabla v)\big) \cdot \big( \nabla u_n- \nabla v \big) \d x \d t
\end{equation} is non-negative, and it factorizes as
\begin{equation}\label{est:X_n}
    \begin{aligned}
        0 \leq X_n &= \iint_{\Omega_T} A(x, t, u_n, \nabla u_n) \cdot \nabla( u_n - g) \d x \d t 
        \\
        &  - \iint_{\Omega_T} A(x, t, u_n, \nabla u_n) \cdot \nabla (v - g)\, \d x \d t - \iint_{\Omega_T} A(x, t, u_n,\nabla v) \cdot ( \nabla u_n-\nabla v) \d x \d t
        \\
        &= I_1(n)-I_2(n)-I_3(n).
    \end{aligned}
\end{equation} We estimate each member of the right-hand side as $n$ grows. By \eqref{eq-summed-integrated}, \eqref{gamma-der-integrated} and \eqref{derB-g-integrated} we have  
\begin{equation}\label{calc:I_1}
\begin{aligned}
 I_1(n) &= \iint_{\Omega_T} \alpha (|u_n| + \varepsilon_n)^{\alpha - 1} \partial_t u_n g \d x \d t - \iint_{\Omega_T} \alpha (|u_n| + \varepsilon_n)^{\alpha - 1}u_n \partial_t u_n \d x \d t
   \\
   &= \int_{\Omega} \Gamma_n (u_n(x,0)) \d x - \int_{\Omega} \Gamma_n (u_n(x,T)) \d x + \int_\Omega B_n(u_n(x,T))g(x,T)\d x 
 \\
 &\quad - \int_\Omega B_n(u_n(x,0))g(x,0)\d x - \iint_{\Omega_T} B_n(u_n) \partial_t g \d x \d t + \iint_{\Omega_T} f(u_n-g)\d x \d t.
   \end{aligned}
\end{equation}
In order to analyze the behavior of the integrals involving $\Gamma_n$ as $n\to \infty$, we note that for any $u \in \R$ we can write
\begin{align*}
 D_n(u) &:= \Gamma_n(u) - \frac{\alpha}{\alpha+1}|u|^{\alpha+1} = \alpha \int^u_0 (|s| + \varepsilon_n)^{\alpha-1}s - |s|^{\alpha-1}s \d s 
 \\
 &= \alpha \int_{[0,u]\cap [-\varepsilon_n, \varepsilon_n]} \hspace{-3mm}(|s| + \varepsilon_n)^{\alpha-1}s - |s|^{\alpha-1}s \d s + \alpha \int_{[0,u]\setminus [-\varepsilon_n, \varepsilon_n]}\hspace{-3mm} s[ (|s| + \varepsilon_n)^{\alpha-1} - |s|^{\alpha-1}] \d s
 \\
 &=: D^1_n(u) + D^2_n(u),
\end{align*}
where we remark that $[0,u]$ should be replaced by $[u,0]$ (along with a change of sign) in the case that $u<0$. One immediately sees that 
\begin{align*}
 |D^1_n(u)| \leq c\varepsilon_n^{\alpha+1}.
\end{align*}
In order to estimate $D^2_n(u)$ we simply use the mean value theorem to conclude that for some $t_n^s\in (0,1)$ we have 
\begin{align*}
|D^2_n(u)| \leq  \alpha \int_{[0,u]\setminus [-\varepsilon_n, \varepsilon_n]} |s|(1-\alpha) (|s| + t_n^s \varepsilon_n)^{\alpha-2}\varepsilon_n \d s &\leq c \int_{[0,u]\setminus [-\varepsilon_n, \varepsilon_n]} |s|^{\alpha-1} \varepsilon_n \d s 
\\
&\leq c\int_{[0,u]\setminus [-\varepsilon_n, \varepsilon_n]} \varepsilon_n^\alpha \d s \leq c \varepsilon^\alpha_n |u|.
\end{align*}
Hence, for $a\in \{0,T\}$ we have 
\begin{align}\label{Gamma_n-D_n-stuff}
 \int_{\Omega} \Gamma_n (u_n(x,a)) \d x = \int_\Omega D_n(u_n(x,a)) \d x + \frac{\alpha}{\alpha+1}\int_\Omega |u_n(x,a)|^{\alpha+1}\d x.
\end{align}
By the previous estimates for $D^1_n(u)$ and $D^2_n(u)$ we conclude that 
\begin{align}\label{lim:D_n-int-abs}
 \Big| \int_\Omega D_n(u_n(x,a)) \d x \Big| &\leq c \int_\Omega \varepsilon^{\alpha+1}_n  + \varepsilon_n^\alpha |u_n(x,a)| \d x 
 \\
\notag &\leq c|\Omega| \varepsilon^{\alpha+1}_n + c\varepsilon_n^{\alpha}|\Omega|^\frac{\alpha}{\alpha+1}\norm{u_n(\cdot, a)}_{L^{\alpha+1}(\Omega)}
 \\
\notag &\xrightarrow[n\to\infty]{} 0,
\end{align}
where in calculating the limit we use the fact that the $L^{\alpha+1}$-norm of $u_n(\cdot, a)$ stays bounded from above by a constant due to \eqref{est:almost-energy}. In order to treat the second integral in \eqref{Gamma_n-D_n-stuff} we need to distinguish between the two values of $a\in\{0,T\}$. For $a=0$ we note that $u_n(\cdot,0) = u_0^n$ converges to $u_0$ strongly in $L^2(\Omega)$, which implies strong convergence also in $L^{\alpha+1}(\Omega)$. From this it follows immediately that 
\begin{align}\label{a=0-strongnormlimit}
 \lim_{n\to\infty} \frac{\alpha}{\alpha+1}\int_\Omega |u_n(x,0)|^{\alpha+1}\d x = \frac{\alpha}{\alpha+1}\int_\Omega |u_0|^{\alpha+1}\d x.
\end{align}
Combining \eqref{a=0-strongnormlimit}, \eqref{lim:D_n-int-abs} and \eqref{Gamma_n-D_n-stuff}, we thus have that 
\begin{align}\label{lim-Gamma_n-u_n}
 \lim_{n\to\infty} \int_{\Omega} \Gamma_n (u_n(x,0)) \d x = \frac{\alpha}{\alpha+1}\int_\Omega |u_0|^{\alpha+1}\d x.
\end{align}
For $a=T$ on the other hand we use the fact that by the energy estimate \eqref{est:u_n-energy}, the sequence $(|u_n|^{\alpha-1}u_n(\cdot,T))$ is bounded in $L^\frac{\alpha+1}{\alpha}(\Omega)$, which means that a subsequence converges weakly in $L^\frac{\alpha+1}{\alpha}(\Omega)$ to a limit function $w$. By the weak lower semicontinuity of norms we thus have 
\begin{align*}
 \liminf_{n\to\infty} \int_\Omega |u_n(x,T)|^{\alpha+1}\d x = \liminf_{n\to\infty} \int_\Omega ||u_n|^{\alpha-1}u_n(x,T)|^\frac{\alpha+1}{\alpha}\d x\geq \int_\Omega |w|^{\alpha+1} \d x.
\end{align*}
Combining this fact with \eqref{lim:D_n-int-abs} and \eqref{Gamma_n-D_n-stuff} we have
\begin{align}\label{liminf-Gamma_n-T}
 \liminf_{n\to \infty} \int_{\Omega} \Gamma_n (u_n(x,T)) \d x \geq \frac{\alpha}{\alpha+1}\int_\Omega |w|^\frac{\alpha+1}{\alpha} \d x.
\end{align}
We now intend to show that in fact $w=|u|^{\alpha-1}u(\cdot,T)$ when we use the time-continuous representative of $u$. First note that by using \eqref{chemistryclass} with $\varphi$ chosen as in \eqref{func:trapets} with $t_1=0$ and $t_2=T$ and passing to the limit, we see that for all $v \in \cup^\infty_{n=1} V_n$,  
\begin{align*}
 \int_{\Omega} B_n(u_n(x,T))  v(x) \d x &= \int_{\Omega} B_n(u_n(x,0))  v(x) \d x - \iint_{\Omega_T} A(x, u_n, \nabla u_n) \cdot \nabla v(x) \d x \d t
 \\
 & \quad + \iint_{\Omega_T} f(x,t)v(x) \d x \d t,
\end{align*}
and thus
\begin{align}\label{cleveland}
 \notag \int_\Omega |u_n(x,T)|^{\alpha-1}u_n(x,T) v(x) \d x = \int_\Omega [|u_n(x,T)|^{\alpha-1}u_n(x,T) - B_n(u_n(x,T))] v(x) \d x 
 \\
 + \int_{\Omega} B_n(u_n(x,0))  v(x) \d x - \iint_{\Omega_T} A(x, u_n, \nabla u_n) \cdot \nabla v(x) \d x \d t + \iint_{\Omega_T} f(x,t)v(x) \d x \d t.
\end{align}
By \eqref{B_n-alpha-diff}, the first integral on the right-hand side vanishes in the limit $n\to \infty$. Similarly,
\begin{align*}
 \int_{\Omega} B_n(u_n(x,0))  v(x) \d x & = \int_{\Omega} [B_n(u_n(x,0)) - |u_n(x,0)|^{\alpha-1}u_n(x,0)]  v(x) \d x 
 \\
 & \hphantom{=} + \int_\Omega |u_n(x,0)|^{\alpha-1}u_n(x,0) v(x) \d x
 \\
 &\xrightarrow[n\to\infty]{} \int_\Omega |u_0|^{\alpha-1}u_0 v \d x.
\end{align*}
Combining these observations with \eqref{cleveland}, and utilizing also the weak convergence of the sequence $(A(x,u_n,\nabla u_n))$, we conclude that 
\begin{align*}
 \lim_{n\to\infty} \int_\Omega |u_n(x,T)|^{\alpha-1}u_n(x,T) v(x) \d x &= \int_\Omega |u_0|^{\alpha-1}u_0 v \d x - \iint_{\Omega_T} \mathcal{A}(x,t) \cdot \nabla v(x) \d x \d t
 \\
 & \quad +  \iint_{\Omega_T} f(x,t)v(x) \d x \d t
 \\
 &= \int_\Omega |u(x,T)|^{\alpha-1}u(x,T) v(x) \d x,
\end{align*}
where the last step follows from Lemma \ref{lem:3rd-equiv-formulation}. This limit was established under the assumption $v \in \cup^\infty_{n=1} V_n$, but by density arguments it holds also for all $v\in L^{\alpha+1}(\Omega)$, and we have thus confirmed that $w= |u|^{\alpha-1}u(\cdot, T)$. In light of this, we can restate \eqref{liminf-Gamma_n-T} as 
\begin{align}\label{liminf-aater}
 \liminf_{n\to \infty} \int_{\Omega} \Gamma_n (u_n(x,T)) \d x \geq \frac{\alpha}{\alpha+1}\int_\Omega |u(x,T)|^{\alpha+1} \d x.
\end{align}
Since also all terms in \eqref{calc:I_1} involving $B_n(u_n)$ converge to terms containing $|u|^{\alpha-1}u$, we see using \eqref{liminf-aater} and \eqref{lim-Gamma_n-u_n} that 
\begin{align}\label{limsup:I_1}
 \notag \limsup_{n\to \infty}& I_1(n) \leq \frac{\alpha}{\alpha+1}\int_\Omega |u_0|^{\alpha+1} \d x  - \frac{\alpha}{\alpha+1}\int_\Omega |u(x,T)|^{\alpha+1} \d x
 + \int_\Omega |u|^{\alpha-1}u g(x,T)\d x 
 \\
  &\quad - \int_\Omega |u(x,0)|^{\alpha-1}u(x,0)g(x,0)\d x - \iint_{\Omega_T} |u|^{\alpha-1}u \partial_t g \d x \d t + \iint_{\Omega_T} f(u - g)\d x \d t
 \\
\notag &= \iint_{\Omega_T}\mathcal{A} \cdot \nabla (u - g)\d x \d t, 
\end{align}
where the last step follows from Lemma \ref{lem:initial-val}.
For the term $I_2(n)$ in \eqref{est:X_n} we have directly by weak convergence that 
\begin{align}\label{lim:I_2}
 \lim_{n\to\infty} I_2(n) = \iint_{\Omega_T} \mathcal{A} \cdot \nabla (v - g)\, \d x \d t.
\end{align}
The term $I_3(n)$ can be treated as follows:
\begin{align*}
 I_3(n) &= \iint_{\Omega_T} A(x,t, u_n,\nabla v) \cdot ( \nabla u_n - \nabla v) \d x \d t 
 \\
 &= \iint_{\Omega_T} \big(A(x,t, u_n,\nabla v)  -  A(x,t,u,\nabla v)\big) \cdot ( \nabla u_n-\nabla v) \d x \d t 
\\
&\quad + \iint_{\Omega_T} A(x,t, u,\nabla v) \cdot ( \nabla u_n-\nabla v) \d x \d t 
\\
&= I_3^1(n) + I_3^2(n).
 \end{align*}
The term $I_3^1(n)$ will vanish in the limit due to the uniform bound of the partial derivatives $\partial_j u_n$ in $L^{p_j}(\Omega)$ combined with the pointwise a.e. convergence of $u_n$ to $u$ and the structure condition \eqref{cond:structure2}, which allows us to use the Dominated Convergence Theorem. The term $I_3^2(n)$ converges due to the weak convergence of the partial derivatives $\partial_j u_n$ to the corresponding expression without $n$, so all in all we have
\begin{align}\label{lim:I_3}
 \lim_{n\to \infty} I_3(n) = \iint_{\Omega_T} A(x,u,\nabla v) \cdot ( \nabla u - \nabla v) \d x \d t.
\end{align}
Combining \eqref{limsup:I_1}, \eqref{lim:I_2} and \eqref{lim:I_3} we end up with
\begin{align*}
 0 \leq \limsup_{n\to\infty} X_n \leq \iint_{\Omega_T}\mathcal{A} \cdot \nabla (u - g) \d x \d t - \iint_{\Omega_T} \mathcal{A} \cdot \nabla (v- g) \d x \d t 
 \\
 - \iint_{\Omega_T} A(x,u,\nabla v) \cdot ( \nabla u - \nabla v) \d x \d t.
\end{align*}
Especially, taking $v=u$ we see that 
\begin{align*}
 \lim_{n\to \infty} \iint_{\Omega_T} \big( A(x,t,u_n, \nabla u_n)- A(x,t, u_n, \nabla u)\big) \cdot \big( \nabla u_n- \nabla u \big) \d x \d t = 0.
\end{align*}
Given that the integrand is nonnegative by the monotonicity condition we have thus stated that the integrand converges to zero in $L^1(\Omega_T)$. Hence, after passing to another subsequence we have that 
\begin{align*}
  \lim_{n\to \infty}  \big( A(x,t,u_n, \nabla u_n)- A(x,t,u_n, \nabla u)\big) \cdot \big( \nabla u_n- \nabla u \big) = 0, 
\end{align*}
pointwise for almost every $(x,t)\in \Omega_T$. This means that we can reason as in Lemma 2.4 and Lemma 4.4 in Laptev, as these arguments are based only on the strict monotonicity property, to conclude that for a subsequence,
$\nabla u_n(x,t) \to \nabla u(x,t)$ pointwise for a.e. $(x,t)\in \Omega_T$. Since we already established that also $u_n$ converges pointwise a.e. to $u$ we have by the Caratheodory property that
\begin{align}\label{ptwise-convg}
 A(x,t, u_n(x,t), \nabla u_n(x,t)) \xrightarrow[n\to \infty]{} A(x,t, u(x,t),\nabla u(x,t)), \qquad \textnormal{for a.e. } (x,t)\in \Omega_T.
\end{align}
Note however that we already proved that each component $A_i(x,t,u_n,\nabla u_n)$ converges weakly in $L^{p_i'}(\Omega_T)$ to $\mathcal{A}_i$. Hence, by Mazur's lemma there is a sequence of convex combinations which converges strongly to $\mathcal{A}_i$. Passing to another subsequence we have pointwise a.e. convergence of the convex combinations to $\mathcal{A}_i$. However, these convex combinations also converge pointwise to $A_i(x,t,u,\nabla u)$ by \eqref{ptwise-convg} so we have confirmed that 
\begin{align*}
 A(x,t,u,\nabla u) = \mathcal{A},
\end{align*}
which completes the proof in the case $\alpha \in (0,1)$.

\vspace{3mm}
\noindent \textbf{Modifications in the case $\alpha \geq 1$.}
Instead of \eqref{est:theta-below} we now have 
\begin{align*}
 \theta_{\xi,t} \geq \varepsilon_n^\frac{\alpha-1}{2}.
\end{align*}
Consequently, the lower bound for any eigenvalue $\lambda$ of $F(\xi,t)$ becomes $\lambda \geq \alpha\varepsilon_n^{\alpha-1}$ which leads to the bound 
\begin{align*}
 \norm{F(\xi,t)^{-1}}_{\textnormal{op}} \leq \alpha^{-1}\varepsilon_n^{1-\alpha}.
\end{align*}
Note also that the assumptions $g,\, \partial_t g \in L^{\alpha+1}(\Omega_T)$ are sufficient for concluding that 
\begin{align*}
 |K(\xi,t)| \leq \mathcal{K}_1(t) + c|\xi|^{\alpha-1}\mathcal{K}_2(t), \qquad \textnormal{where } \mathcal{K}_1 \in L^1([0,T]) \textnormal{ and } \mathcal{K}_2 \in L^{\alpha+1}([0,T]), 
\end{align*}
which is the estimate replacing \eqref{est:K-xi-t} in this range. These modifications mean that we can replace \eqref{est:H} by an estimate of the form
\begin{align*}
 |H(\xi,t)| \leq \mathcal{N}_1(t) + \mathcal{N}_2(t)(1 + |\xi|)^{\max \{\alpha-1, p_N\}}, \qquad \textnormal{where } \mathcal{N}_1, \mathcal{N}_2 \in L^1([0,T]),
\end{align*}
which is sufficient to prove the local existence in a similar way as before. The estimate \eqref{est:xi-of-a} takes the same shape as before, but is now obtained by using H\"older's inequality on the $L^2$-norm of $w_n(\cdot,a)$ with the exponents $(\alpha+1)/2$ and $(\alpha+1)/(\alpha-1)$. We may again use the chain rule for Sobolev functions on $\Gamma_n(u_n)$ although the justifications are somewhat different in this case. The estimate \eqref{est:Gamma-lower} is replaced by
\begin{align*}
 \Gamma_n(u) = \alpha \int^{|u|}_0 (|s| + \varepsilon_n)^{\alpha-1}s \d s \geq \alpha \int^{|u|}_0 s^\alpha \d s = \frac{\alpha}{\alpha +1}|u|^{\alpha+1}.
\end{align*}
Instead of \eqref{B_n-ab-diff} we now have the estimate
\begin{align}\label{B_n-ab-diff-alpha-geq1}
 |B_n(b) - B_n(a)| = \Big| \alpha \int^b_a (|s|+ \varepsilon_n)^{\alpha-1} \d s\Big| &\leq \alpha \big| (|a|+ \varepsilon_n)^{\alpha-1} + (|b|+ \varepsilon_n)^{\alpha-1}\big| |a-b|
 \\
 \notag &\leq c\big( |a|^{\alpha-1} + |b|^{\alpha-1} + 1\big)|a-b|.
\end{align}
Instead of \eqref{B_n-upperbound} we have
\begin{align}\label{B_n-upperbound-alpha-geq1}
 |B_n(a)| = \int^{|a|}_0 \alpha(s+\varepsilon_n)^{\alpha-1} \d s \leq c \int^{|a|}_0 s^{\alpha-1} + \varepsilon_n^{\alpha-1} \d s = c |a|^\alpha + \varepsilon_n^{\alpha-1}|a| \leq c |a|^\alpha + \varepsilon_n^\alpha,
\end{align}
where we used Young's inequality in the last step. Therefore
\begin{align}\label{csg}
 |B_n(u_n) g|\leq c|u_n|^\alpha |g| + |g| \leq \varepsilon |u_n|^{\alpha+1} + c_\varepsilon |g|^{\alpha+1} + 1.
\end{align}
Integrating this estimate, we get a replacement for \eqref{shsh}. By putting $\partial_t g$ in place of $g$ in \eqref{csg} we obtain a replacement for \eqref{tttt}. All in all, this allows us to obtain an energy estimate as before, although with an additional $|\Omega_T|$ on the right-hand side. Instead of \eqref{est:B} we have due to \ref{alpha-geq1-stuff} of Lemma \ref{lem:b_alpha_geq1} the simpler estimate
\begin{align*}
 |b - &a|^{\alpha+1}  \leq c(|b|^{\alpha-1}b - |a|^{\alpha-1}a)(b-a)  = c \int^b_a |s|^{\alpha-1}\d s \,(b-a) 
 \\
 &\leq \alpha \int^b_a (|s| + \varepsilon_n )^{\alpha-1}\d s\, (b-a) = \int^b_a \frac{\d}{\d s} B_n(s) \d s \, (b - a) = (B_n(b) - B_n(a)) (b - a),
\end{align*}
and this allows us to conclude \eqref{time-translation} by a similar calculation as before, except that the use of H\"older's inequality can be avoided. The pointwise convergence of $B_n(u_n)$ to $|u|^{\alpha-1}u$ can in this range be concluded from the pointwise convergence of $u_n$ to $u$ and the continuity of 
\begin{align*}
 \R\times [0,\infty) \ni (u, \kappa) \mapsto \alpha\int^u_0 (|s| + \kappa)^{\alpha-1} \d s.
\end{align*}
Thus, since $(B_n(u_n))$ is a bounded sequence in $L^\frac{\alpha+1}{\alpha}(\Omega_T)$ by \eqref{B_n-upperbound-alpha-geq1}, we again find a subsequence which converges weakly to $|u|^{\alpha-1}u$. To conclude \eqref{a=0-strongnormlimit} we now use the fact that for $\alpha\geq 1$,
\begin{align*}
 \tfrac{\alpha}{\alpha+1} |v|^{\alpha+1} = \alpha \int^v_0 |s|^{\alpha-1} s \d s \leq \alpha \int^v_0 (|s| + \varepsilon_n)^{\alpha-1} s \d s = \Gamma_n(v)  &\leq \alpha \int^{|v|}_0 (s+\varepsilon_n)^\alpha \d s 
 \\
 &\leq \tfrac{\alpha}{\alpha+1}(|v|+\varepsilon_n)^{\alpha+1}.
\end{align*}
Thus,
\begin{align*}
 0 \leq \Gamma_n(v) - \tfrac{\alpha}{\alpha+1} |v|^{\alpha+1} \leq \tfrac{\alpha}{\alpha+1}\big((|v|+\varepsilon_n)^{\alpha+1} - |v|^{\alpha+1}\big) \leq c (|v|^\alpha + 1)\varepsilon_n.
\end{align*}
where the mean value theorem is used in the last step. From this it follows that
\begin{align*}
 \big|\Gamma_n&(u_n(x,0)) - \tfrac{\alpha}{\alpha+1} |u_0|^{\alpha+1}\big| 
 \\
 &\leq \big|\Gamma_n(u_n(x,0)) - \tfrac{\alpha}{\alpha+1} |u_n(x,0)|^{\alpha+1}\big| 
 + \tfrac{\alpha}{\alpha+1}\big| |u_n(x,0)|^{\alpha+1} - |u_0|^{\alpha+1}\big|
 \\
 &\leq c(|u_n(x,0)|^\alpha + 1)\varepsilon_n + \tfrac{\alpha}{\alpha+1}\big| |u_n(x,0)|^{\alpha+1} - |u_0|^{\alpha+1}\big|.
\end{align*}
Integrating over $\Omega$ and using the fact that $u_n(x,0)$ converges strongly in $L^{\alpha+1}(\Omega)$ we conclude that \eqref{a=0-strongnormlimit} is true. In order to identify the weak limit of $(|u_n|^{\alpha-1}u_n(x,T))$ we make use of the estimate 
\begin{align*}
 |B_n(a) - |a|^{\alpha-1}a| = \Big| \alpha \int^{|a|}_0 (s + \varepsilon_n)^{\alpha-1}\d s - |a|^\alpha\Big| = \big| (|a| + \varepsilon_n)^\alpha - \varepsilon_n^\alpha - |a|^\alpha \big| 
 \\
 \leq (|a| + \varepsilon_n)^\alpha - |a|^\alpha + \varepsilon_n^\alpha \leq c(|a|^{\alpha-1} + 1)\varepsilon_n + \varepsilon_n^\alpha,
\end{align*}
where the mean value theorem has been used in the last step. This estimate along with the boundedness of $(u_n(x,T))$ in $L^{\alpha+1}(\Omega)$ shows that the first integral on the right-hand side of \eqref{cleveland} vanishes in the limit $n\to \infty$. The limit of the second integral on the right-hand side of \eqref{cleveland} can be analyzed in a similar way, and thus we obtain the correct weak limit of the sequence $(|u_n|^{\alpha-1}u_n(x,T))$. The rest of the proof does not require any modifications.
\qed

\vspace{3mm}
We can use Theorem \ref{thm:existence} to conclude the existence of a solution also to the Cauchy problem on $S_T := \R^N \times (0,T)$, i.e. Theorem \ref{thm:existence-S_T}. Many of the steps of the proof are analogous to the corresponding steps in the proof of Theorem \ref{thm:existence} and will only be outlined briefly.

\vspace{3mm}
\noindent \textbf{Proof of theorem \ref{thm:existence-S_T}.}
 Theorem \ref{thm:existence} implies that for every $n\in \N$, there is a solution $u_n$ to the problem
\begin{align}
   \left\{
\begin{array}{ll}
\partial_t (|u_n|^{\alpha-1}u_n)  - \nabla\cdot A(x,t,u_n,\nabla u_n) = f, & \quad \text{in } B_n\times(0,T) 
\\[5pt]
 u_n(x,0) = u_0(x),  & \quad x \in B_n,
 \\
 u_n = 0,  & \quad \text{on } \partial B_n \times (0,T).
\end{array}
\right.
 \end{align} 
 where $B_n := B(\bar 0, n)$. Since $u_n \in L^{\alpha+1}(B_n\times(0,T)) \cap L^{\bf p}(0,T; W^{1, {\bf p}}_{\textnormal{o}}(B_n))$, we may extend $u_n$ as zero outside of $B_n$, obtaining an element $u_n \in L^{\alpha+1}(S_T) \cap L^{\bf p}(0,T; W^{1, {\bf p}}(\R^N))$ which is compactly supported in space.
We may utilize Lemma \ref{lem:initial-val} with $u=u_n$, $g=0$ and $\mathcal{A} = A(x,t,u_n,\nabla u_n)$ and the end point $T$ replaced by an arbitrary $a\in (0,T]$ to conclude that
\begin{align}\label{est:energy}
\sup_{t\in [0,T]} &\norm{u_n(\cdot,t)}^{\alpha+1}_{L^{\alpha+1}(\R^N)} + \iint_{S_T} \sum^N_{k=1} |\partial_k u_n|^{p_k} \d x \d t
\\
&=
\notag  \sup_{t\in [0,T]}\norm{u_n(\cdot,t)}^{\alpha+1}_{L^{\alpha+1}(B_n)} + \int^T_0 \int_{B_n} \sum^N_{k=1} |\partial_k u_n|^{p_k} \d x \d t 
 \\
\notag &\leq c \norm{u_0}_{L^{\alpha+1}(B_n)}^{\alpha+1} + c\int^T_0 \int_{B_n} |f|^\frac{\alpha+1}{\alpha} + \tilde a \d x \d t
 \\
 \notag & \leq c \norm{u_0}_{L^{\alpha+1}(\R^N)}^{\alpha+1} + c\iint_{S_T} |f|^\frac{\alpha+1}{\alpha} + \tilde a\,\d x \d t.
\end{align}
The argument makes use of the structure conditions and Young's inequality in a way which is analogous to the proof of the energy estimate for the approximating solutions in Step 2 of the proof of Theorem \ref{thm:existence}. Thus, we see that $(u_n)$ is a bounded sequence in $L^{\alpha+1}(S_T)$, the partial derivatives $(\partial_k u_n)$ form a bounded sequence in $L^{p_k}(S_T)$ and combining this fact with the structure condition \eqref{cond:structure2} we have that $(A_k(x,t,u_n,\nabla u_n))$ is a bounded sequence in $L^{p_k'}(\R^N)$. Thus, as in the proof of Theorem \ref{thm:existence} we obtain for a subsequence (still labelled as $(u_n)$) that 
\begin{enumerate}
 \item The sequence $(|u_n|^{\alpha-1}u_n)^\infty_{n=1}$ converges weakly in $L^\frac{\alpha+1}{\alpha}(S_T)$ to a limit function $u$.
 \item For each $j\in \{1, \dots, N\}$ the sequence $(\partial_j u_n)^\infty_{n=1}$ converges weakly in $L^{p_j}(S_T)$ to the weak derivative $\partial_j u$. 
 \item For each $j\in \{1, \dots, N\}$ the sequence $\big(A_j(x, t, u_n, \nabla u_n)\big)^\infty_{n=1}$ converges weakly in $L^{p_j'}(S_T)$ to a limit function $\mathcal{A}_j$.
\end{enumerate}
We also note that $u \in \mathring{U}^{1,{\bf p}}_{\alpha+1}$ To see this, note that each $u_n$ is in $\mathring{U}^{1,{\bf p}}_{\alpha+1}$ since its support is contained in $\bar B_n \times [0,T]$. Since $u$ is the weak limit of the functions $u_n$ in the space $U^{1,{\bf p}}_{\alpha+1}$ and since the subspace $\mathring{U}^{1,{\bf p}}_{\alpha+1}$ is closed we have that also $u$ belongs to $\mathring{U}^{1,{\bf p}}_{\alpha+1}$. 
As in the proof of Theorem \ref{thm:existence}, we want to prove strong convergence of the sequence $(u_n)$ in some $L^q$-space. For this purpose, note that by Lemma \ref{lem:3rd-equiv-formulation}, we may write
\begin{align} \label{aaather}
\notag &\int_{B_n} \big(|u_n|^{\alpha-1}u_n(x,t+h) - |u_n|^{\alpha-1}u_n(x,t)\big) v(x) \d x =
\\ & \int^h_0 \int_{B_n} f(x,t + s) v(x) \d x \d s 
-\int^h_0 \int_{B_n} A(x,t + s,u_n(x,t+s), \nabla u_n(x,t+s))\cdot \nabla v(x) \d x \d s,
\end{align}
for all $v\in C^\infty_{\textnormal{o}}(B_n)$. In the case $\alpha \in (0,1)$ we use the integral inequality of Lemma \ref{lem:integral-alpha-ineq}, and \eqref{aaather} with the choice  $v(x) = u_n(x,t+h) - v(x,t)$ and H\"older's inequality to obtain
\begin{align*}
 &\int^{T-h}_0 \int_{B_n} |u_n(x, t + h) - u_n(x,t)|^{\alpha+1} \d x \d t
 \\
 &\leq c\int^{T-h}_0 \Big[\int_{B_n}\big(|u_n|^{\alpha-1}u_n(x,t+h) - |u_n|^{\alpha-1}u_n(x,t)\big) (u_n(x,t+h) - u_n(x,t)) \d x \Big]^\frac{\alpha+1}{2}
 \\
&  \qquad \qquad \times \Big[ \int_{B_n} |u_n(x,t+h)|^{\alpha+1} + |u_n(x,t)|^{\alpha+1} \d x \Big]^\frac{1-\alpha}{2} \d t
\\
&\leq 
 c\Big(\Big[\int^{T-h}_0 \int^h_0 \int_{B_n} |A(x,t + s, u_n, \nabla u_n)\cdot \nabla (u_n(x,t+h) - u_n(x,t))| \d x \d s \d t \Big]^\frac{\alpha+1}{2}
\\
& \qquad + c\Big[\int^{T-h}_0   \int^h_0 \int_{B_n} f(x,t + s) (u_n(x,t+h) - u_n(x,t)) \d x \d s \d t\Big]^\frac{\alpha+1}{2}\Big) 
\\
&\quad \times \Big[\int^{T-h}_0 \int_{B_n} |u_n(x,t+h)|^{\alpha+1} + |u_n(x,t)|^{\alpha+1} \d x \d t\Big]^\frac{1-\alpha}{2}.
\end{align*}
Due to the energy estimate \eqref{est:energy}, we may reason as in the proof of Theorem \ref{thm:existence} to obtain the bound
\begin{align*}
 \int^{T-h}_0 \int_{B_n} |u_n(x, t + h) - u_n(x,t)|^{\alpha+1} \d x \d t \leq c h^\frac{\alpha+1}{2},
\end{align*}
where $c$ is independent of $n$. In the case $\alpha \geq 1$ the calculations are somewhat easier since we may use \ref{alpha-geq1-stuff} of Lemma \ref{lem:b_alpha_geq1} and avoid the application of H\"older's inequality.

The bound for translations in space is obtained as in the proof of Theorem \ref{thm:existence} with some simplifications as $g=0$ in the current case. Thus, we have that a suitable subsequence converges in $L^q_{\textnormal{loc}}(S_T)$ to $u$ for $q= \min\{\alpha+1, p_j\}$, and also pointwise a.e. after passing to another subsequence. 
The weak formulation satisfied by $u_n$ and the convergence of the sequences $(u_n)$ and $(A(x,t,u_n,\nabla u_n))$ in the sense mentioned above imply that $u$ satisfies the equation
\begin{align*}
 \iint_{S_T} |u|^{\alpha-1}u \partial_t \psi  - \mathcal{A}\cdot \nabla \psi \d x \d t = - \iint_{S_T} f \psi \d x \d t,
\end{align*}
for every $\psi \in C^\infty_{\textnormal{o}}(S_T)$. Since $u \in \mathring{U}^{1,{\bf p}}_{\alpha+1}$, this is sufficient to conclude that $u$ has a representative in $C([0,T];L^{\alpha+1}(\R^N))$ due to Lemma \ref{lem:time-cont_RN}.

In order to show that $u$ satisfies the correct initial condition, we use the weak formulations for both $u$ and $u_n$ with a test function of the form \eqref{test-funct-for-init-val} with $v\in C^\infty_{\textnormal{o}}(\Omega)$, and pass to the limit $\delta \to 0$ as before to conclude that 
\begin{align*}
 \int_{\R^N} &|u|^{\alpha-1}u(x,0)v(x)\d x = \iint_{S_T}\mathcal{A} \cdot \nabla v(x) \zeta(t) -  |u|^{\alpha-1}u v(x) \zeta'(t) - f v(x)\zeta(t) \d x \d t
 \\
 &= \lim_{n\to\infty} \int^T_0\int_{B_n} A(x,t,u_n,\nabla u_n) \cdot \nabla v(x)\zeta(t) - |u_n|^{\alpha-1}u_n v(x) \zeta'(t) - f v(x) \zeta(t) \d x \d t
 \\
 &= \lim_{n\to\infty} \int_{B_n} |u_n|^{\alpha-1}u_n(x,0) v(x) \d t 
 \\
 &= \int_{\R^N} |u_0|^{\alpha-1}u_0 v \d x.
\end{align*}
Since $v$ was arbitrary, this confirms that $u(\cdot, 0) = u_0$. It only remains to show that $\mathcal{A} = A(x,t,u,\nabla u)$. We proceed again using Minty's trick. Take $v \in L^{{\bf p}}(0,T; W^{1,\bf{p}}(\R^N))$ and consider the quantity
\begin{align*}
 X_n :=& \iint_{S_T} (A(x,t,u_n, \nabla u_n) - A(x,t,u_n, \nabla v)) \cdot (\nabla u_n - \nabla v) \d x \d t
 \\
=& \iint_{S_T} A(x, t, u_n, \nabla u_n) \cdot \nabla u_n \d x \d t 
  - \iint_{S_T} A(x, t, u_n, \nabla u_n) \cdot \nabla v \d x \d t 
  \\
  &- \iint_{S_T} A(x, t, u_n,\nabla v) \cdot ( \nabla u_n-\nabla v) \d x \d t
        \\
        &= I_1(n)-I_2(n)-I_3(n).
\end{align*}
Reasoning as in the proof of Theorem \ref{thm:existence} see that
\begin{align*}
 \lim_{n\to\infty} I_2(n) = \iint_{S_T} \mathcal{A} \cdot \nabla v \d x \d t, \qquad \lim_{n\to\infty} I_3(n) = \iint_{S_T} A(x,t, u, \nabla v)\cdot(\nabla u - \nabla v) \d x \d t.
\end{align*}
By Lemma \ref{lem:initial-val} we can calculate
\begin{align*}
 I_1(n) &= \int^T_0 \int_{B_n} A(x, t, u_n, \nabla u_n) \cdot \nabla u_n \d x \d t 
 \\
 &= \tfrac{\alpha}{\alpha+1}\int_{B_n}|u_0|^{\alpha+1} \d x - \tfrac{\alpha}{\alpha+1}\int_{B_n}|u_n(x,T)|^{\alpha+1} \d x + \int^T_0 \int_{B_n} f u_n \d x \d t.
\end{align*}
Before proceeding to analyze the limit of each term above we note that the equation satisfied by $u_n$ implies that for any $\psi\in C^\infty_{\textnormal{o}}(\R^N)$ we have 
\begin{align*}
 \lim_{n\to \infty}& \int_{\R^N} |u_n|^{\alpha-1}u_n(x,T) \psi(x) \d x = \lim_{n\to \infty} \int_{B_n} |u_n|^{\alpha-1}u_n(x,T) \psi(x) \d x
 \\
 &= \lim_{n\to \infty} \Big[ \int_{B_n} |u_n|^{\alpha-1}u_n(x,0) \psi(x) \d x - \int^T_0 \int_{B_n} A(x,t,u_n,\nabla u_n) \cdot \nabla \psi - f \psi \d x \d t  
 \\
 &= \int_{\R^N} |u_0|^{\alpha-1}u_0 \psi \d x - \iint_{S_T} \mathcal{A}\cdot \nabla \psi  -  f \psi \d x \d t
 \\
 &= \int_{\R^N} |u|^{\alpha-1}u(x,T) \psi(x) \d x,
\end{align*}
where in the arguments we have made use of the compact support of $\psi$ and the fact that $u_n(0)$ coincides with $u_0$ on $B_n$. In the last step we also use the equation satisfied by $u$. Thus we have identified $|u|^{\alpha-1}u(T)$ as the weak limit of $|u_n|^{\alpha-1}u_n(T)$. As in the proof of Theorem \ref{thm:existence} this allows us to conclude that
\begin{align*}
 \limsup_{n\to \infty} X_n &\leq \tfrac{\alpha}{\alpha+1}\int_{\R^N}|u_0|^{\alpha+1} \d x - \tfrac{\alpha}{\alpha+1}\int_{\R^N}|u(x,T)|^{\alpha+1} \d x + \iint_{S_T} f u \d x \d t 
 \\
  & \quad - \iint_{S_T} \mathcal{A} \cdot \nabla v \d x \d t - \iint_{S_T} A(x,t, u, \nabla v)\cdot(\nabla u - \nabla v) \d x \d t
  \\
  &\leq \iint_{S_T} \mathcal{A} \cdot \nabla u \d x \d t - \iint_{S_T} \mathcal{A} \cdot \nabla v \d x \d t - \iint_{S_T} A(x,t, u, \nabla v)\cdot(\nabla u - \nabla v) \d x \d t,
\end{align*}
where in the last step we apply Lemma \ref{lem:Raviart-RN} to $u$.
In particular, we can take $v=u$ and obtain
\begin{align*}
 \lim_{n\to\infty} \iint_{S_T} (A(x,t,u_n, \nabla u_n) - A(x,t,u_n, \nabla u)) \cdot (\nabla u_n - \nabla u) \d x \d t,
\end{align*}
which as seen in the proof of Theorem \ref{thm:existence} is sufficient to conclude that $\mathcal{A} = A(x,t,u,\nabla u)$.
\qed

\section{Proof of the comparison principle}\label{sec:comparison}
This section is devoted to the proof of Theorem \ref{thm:comparison}. The strategy for proving the comparison principle is similar to the proof of the corresponding result in \cite{BoeDuGiaLiSche}. 
Before proceeding with the proof, we need to verify two lemmas. For this purpose we introduce the following terminology: we say that $\tilde v \in W^{1, {\bf p}}(\Omega)$ is nonnegative on $\partial \Omega$ if $\tilde v$ is of the form
\begin{align}\label{tilde-v-repr}
 \tilde v = \tilde v_o + \tilde g, \quad \tilde v_o \in \overline W^{1, {\bf p}}_{\textnormal{o}}(\Omega), \quad  \tilde g \in W^{1, {\bf p}}(\Omega), \textnormal{ and } \tilde g \geq 0.
\end{align}

\begin{lem}\label{lem:complemma-v}
 Let $v$ be a solution to the Cauchy-Dirichlet problem with zero boundary values and right-hand side $f_1$. Let $\tilde v \in L^{\alpha+1}(\Omega)\cap W^{1, {\bf p}}(\Omega)$ be nonnegative on $\partial \Omega$. Then
 \begin{align}\label{eq:comparison-help}
  \iint_{\Omega_T} -\mathfrak{h}_\delta(v, \tilde v)\partial_t \psi + A(x,v ,\nabla v) \cdot \nabla [\mathcal{H}_\delta(v - \tilde v)]\psi \d x \d t = \iint_{\Omega_T}f_1 \mathcal{H}_\delta(v - \tilde v)\psi \d x \d t,
 \end{align}
for all $\psi \in C^\infty_{\textnormal{o}}(0, T)$. 
\end{lem}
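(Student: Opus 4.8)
{}
The plan is to insert $\mathcal{H}_\delta(v-\tilde v)\psi$ into the time-integrated weak formulation satisfied by $v$; the quantity $\mathfrak{h}_\delta$ is designed precisely so that no weak time derivative of $\mathcal{H}_\delta(v-\tilde v)$ is ever needed. I would first prove \eqref{eq:comparison-help} for nonnegative $\psi$, and then remove the sign restriction by writing $\psi=(\psi+M\eta)-M\eta$ with $\eta\in C^\infty_{\textnormal{o}}(0,T)$, $\eta\ge0$, $\eta\equiv1$ on $\spt\psi$ and $M$ large, using that both sides of \eqref{eq:comparison-help} are linear in $\psi$.

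The first point to settle is admissibility of the test function: for a.e.\ $t\in(0,T)$ the function $\eta_t:=\mathcal{H}_\delta(v(\cdot,t)-\tilde v)$ should lie in $\overline W^{1,{\bf p}}_{\textnormal{o}}(\Omega)\cap L^{\alpha+1}(\Omega)$. Boundedness of $\mathcal{H}_\delta$ and of $\Omega$ gives $\eta_t\in L^\infty(\Omega)\subset L^{\alpha+1}(\Omega)$, and the chain rule for Sobolev functions gives $\nabla\eta_t=\tfrac1\delta\chi_{\{0<v(\cdot,t)-\tilde v<\delta\}}(\nabla v(\cdot,t)-\nabla\tilde v)\in\prod_{i=1}^NL^{p_i}(\Omega)$. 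For the vanishing lateral boundary values I would write $\tilde v=\tilde v_o+\tilde g$ as in \eqref{tilde-v-repr}; since $v(\cdot,t)-\tilde v_o\in\overline W^{1,{\bf p}}_{\textnormal{o}}(\Omega)$ for a.e.\ $t$, approximating it in $W^{1,{\bf p}}(\Omega)$ by functions $\phi_k\in C^\infty_{\textnormal{o}}(\Omega)$ and using $\tilde g\ge0$ (so that $\mathcal{H}_\delta(\phi_k-\tilde g)$ vanishes outside $\spt\phi_k$ and hence lies in $\overline W^{1,{\bf p}}_{\textnormal{o}}(\Omega)$) together with the continuity in $W^{1,{\bf p}}(\Omega)$ of composition with the Lipschitz map $\mathcal{H}_\delta$ shows $\eta_t=\mathcal{H}_\delta((v(\cdot,t)-\tilde v_o)-\tilde g)\in\overline W^{1,{\bf p}}_{\textnormal{o}}(\Omega)$.

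With admissibility in hand I would use the equivalent weak formulation of Lemma \ref{lem:3rd-equiv-formulation} for the solution $v$ (so that the vector field there is $A(x,v,\nabla v)$ and the right-hand side is $f_1$): for all $0<t_1<t_2<T$,
\begin{align*}
 \int_\Omega\big(|v|^{\alpha-1}v(\cdot,t_2)-|v|^{\alpha-1}v(\cdot,t_1)\big)\eta_t\,\d x=\int_{t_1}^{t_2}\!\!\int_\Omega f_1\,\eta_t-A(x,v,\nabla v)\cdot\nabla\eta_t\,\d x\,\d s .
\end{align*}
Taking $t_1=t$, $t_2=t+\ell$, multiplying by $\psi(t)/\ell$ with $\psi\ge0$ and integrating in $t$, I would let $\ell\to0$. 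On the right-hand side the $f_1$-integral is \emph{already} equal to $\iint_{\Omega_T}f_1\,\mathcal{H}_\delta(v-\tilde v)\psi\,\d x\,\d t$ since $f_1$ is independent of time, while the $A$-integral converges to $\iint_{\Omega_T}A(x,v,\nabla v)\cdot\nabla[\mathcal{H}_\delta(v-\tilde v)]\psi\,\d x\,\d t$ by the continuity of translations in $L^{p_i}(\Omega_T)$ applied to $A_i(x,v,\nabla v)\in L^{p_i'}(\Omega_T)$ paired with the fixed function $\psi\,\partial_i\mathcal{H}_\delta(v-\tilde v)\in L^{p_i}(\Omega_T)$. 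For the left-hand side I would use that $y\mapsto\mathfrak{h}_\delta(|y|^{\frac1\alpha-1}y,\tilde v)$ is convex with derivative $\mathcal{H}_\delta(|y|^{\frac1\alpha-1}y-\tilde v)$ (a short computation from the definition of $\mathfrak{h}_\delta$, the substitution $r=|s|^{\alpha-1}s$, and monotonicity of $s\mapsto|s|^{\alpha-1}s$ and $\mathcal{H}_\delta$); evaluating the subgradient inequality at $y=|v|^{\alpha-1}v(t)$ gives
\begin{align*}
 \big(|v|^{\alpha-1}v(t+\ell)-|v|^{\alpha-1}v(t)\big)\,\mathcal{H}_\delta(v(t)-\tilde v)\le\mathfrak{h}_\delta(v(t+\ell),\tilde v)-\mathfrak{h}_\delta(v(t),\tilde v).
\end{align*}
Hence the left-hand side is bounded above by $\tfrac1\ell\iint_{\Omega_T}\big(\mathfrak{h}_\delta(v(\cdot,t+\ell),\tilde v)-\mathfrak{h}_\delta(v(\cdot,t),\tilde v)\big)\psi(t)\,\d x\,\d t$, which tends to $-\iint_{\Omega_T}\mathfrak{h}_\delta(v,\tilde v)\psi'\,\d x\,\d t$ by the standard difference-quotient argument, using that $\mathfrak{h}_\delta(v,\tilde v)\in L^1(\Omega_T)$ by \eqref{h-delta-upperbnd} and $v,\tilde v\in L^{\alpha+1}$. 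This yields ``$\ge$'' in \eqref{eq:comparison-help} for $\psi\ge0$, and the reverse inequality follows in the same manner from the choice $t_1=t-\ell$, $t_2=t$ with the lower convexity bound.

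The step I expect to be the main obstacle is exactly this: making the formal identity $\iint\partial_t(|v|^{\alpha-1}v)\,\mathcal{H}_\delta(v-\tilde v)\psi=-\iint\mathfrak{h}_\delta(v,\tilde v)\psi'$ rigorous without a time derivative of $v$, which is where the convexity of the primitive $\mathfrak{h}_\delta$ together with the monotonicity of $s\mapsto|s|^{\alpha-1}s$ and of $\mathcal{H}_\delta$ serves to trap the difference quotient of $|v|^{\alpha-1}v$ tested against $\mathcal{H}_\delta(v-\tilde v)$ between the forward and backward difference quotients of $\mathfrak{h}_\delta(v,\tilde v)$. A secondary technical point is the admissibility check $\mathcal{H}_\delta(v(\cdot,t)-\tilde v)\in\overline W^{1,{\bf p}}_{\textnormal{o}}(\Omega)$, which genuinely uses both that $v$ has vanishing lateral boundary values and that $\tilde v$ is nonnegative on $\partial\Omega$. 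As an alternative to the difference quotients one could instead test with the exponential time mollification of Lemma \ref{lem:expmolproperties} of $\mathcal{H}_\delta(v-\tilde v)\psi$ and exploit the same monotonicity to discard an error term of definite sign before passing to the limit.
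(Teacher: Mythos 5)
Your proposal is correct, and it takes a genuinely different route from the paper. The paper inserts the test function $\varphi_h=\mathcal{H}_\delta([v]_h-\tilde v)\psi$ with the exponential time mollification $[v]_h$ appearing \emph{inside} $\mathcal{H}_\delta$, then integrates by parts in time via the chain rule (with an extra $\varepsilon$-regularization of $|\cdot|^{\alpha-1}(\cdot)$ for $\alpha<1$), and discards the one-signed error term $(|v|^{\alpha-1}v-|v_h|^{\alpha-1}v_h)\,\mathcal{H}_\delta'([v]_h-\tilde v)\,\partial_t v_h\,\psi$ using the special identity $\partial_t v_h=\tfrac1h(v-v_h)$ together with the monotonicity of $s\mapsto|s|^{\alpha-1}s$. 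You instead test the Steklov/slab form of the weak equation (Lemma~\ref{lem:3rd-equiv-formulation}) against the time-constant spatial function $\mathcal{H}_\delta(v(\cdot,t)-\tilde v)$ and let the convexity of $y\mapsto\mathfrak{h}_\delta(|y|^{\frac1\alpha-1}y,\tilde v)$ trap the difference quotient of $|v|^{\alpha-1}v$; your computation that this map is a $C^1$ convex function with derivative $\mathcal{H}_\delta(|y|^{\frac1\alpha-1}y-\tilde v)$ is correct (writing $F(y)=\int_{|\tilde v|^{\alpha-1}\tilde v}^y\mathcal{H}_\delta(|\sigma|^{\frac1\alpha-1}\sigma-\tilde v)\,\d\sigma$ makes this transparent for all $\alpha>0$, with no case split). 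Both proofs ultimately discard an error of definite sign and both implicitly first assume $\psi\ge0$; you make the subsequent linearity reduction explicit, which the paper does not, and this is a worthwhile clarification. What your route buys is the avoidance of the chain-rule/$\varepsilon$-regularization subtleties and a unified treatment of $\alpha<1$ and $\alpha\ge1$; what the paper's route buys is that it stays within the exponential-mollification machinery already set up in Lemma~\ref{lem:expmolproperties} and Lemma~\ref{lem:weaksol2} rather than relying on the stronger equivalent formulation Lemma~\ref{lem:3rd-equiv-formulation}. The admissibility argument you give for $\mathcal{H}_\delta(v(\cdot,t)-\tilde v)\in\overline W^{1,\mathbf p}_{\mathrm o}(\Omega)$, via the decomposition $\tilde v=\tilde v_o+\tilde g$ and the nonnegativity of $\tilde g$, is essentially identical to the paper's.
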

\begin{proof}{}
We claim that $\mathcal{H}_\delta([v]_h - \tilde v)$ is in the space $ W^{1, \alpha + 1}(0, T; L^{\alpha + 1}(\Omega)) \cap L^{\bf p}(0,T; \overline W^{1, {\bf p}}_{\textrm{o}}(\Omega))$ so that we can use $\varphi_h:= \mathcal{H}_\delta([v]_h - \tilde v)\psi$, where $\psi \in C^\infty_{\textnormal{o}}(0, T)$ as test function as guaranteed by Lemma \ref{lem:weaksol2}. To see this, note that 
\begin{align*}
 [v]_h(x,t) - \tilde v(x) = ([v]_h(x,t) - \tilde v_o(x)) - \tilde g(x),
\end{align*}
where $\tilde v_o$ and $\tilde g$ are as in \eqref{tilde-v-repr}. The expression inside the brackets on the right-hand side forms an element of $L^{\bf p}(0,T; \overline W^{1, {\bf p}}_{\textrm{o}}(\Omega)) \cap L^{\alpha+1}(\Omega_T)$ which can thus be approximated by a sequence $(v_j) \subset C^\infty(\bar \Omega \times [0,T])$ such that $\supp v_j \subset K_j \times [0,T]$ for some compact $K_j \subset \Omega$. Since $\tilde g$ is nonnegative we have that $\mathcal{H}_\delta(v_j - \tilde g)$ vanishes when $x \notin K_j$. Passing to the limit $j\to \infty$ we see that also $\mathcal{H}_\delta([v]_h - \tilde v)$ is in the right space.
Since $\partial_k \big( \mathcal{H}_\delta([v]_h - \tilde v)\psi \big) \to \partial_k \big( \mathcal{H}_\delta(v - \tilde v)\psi \big)$ in $L^{p_k}(\Omega_T)$ we see using H\"older's inequality and the estimate \eqref{cond:structure2} for the components $A_k$ that 
 \begin{align}\label{h-to-zero-elliptic}
  \iint_{\Omega_T} A(x,v,\nabla v)\cdot \nabla \varphi_h \d x\d t  \xrightarrow[h\to 0]{} \iint_{\Omega_T} A(x,v,\nabla v)\cdot \nabla [ \mathcal{H}_\delta(v - \tilde v)]\psi \d x\d t.
 \end{align}
 For the right-hand side we have 
 \begin{align}\label{h-to-zero-rhs}
  \iint_{\Omega_T} f_1 \varphi_h \d x \d t \xrightarrow[h\to 0]{}  \iint_{\Omega_T}f_1 \mathcal{H}_\delta(v - \tilde v)\psi \d x \d t.
 \end{align}
 The diffusion part can be re-written as 
 \begin{align}\label{diffpart-split}
 \iint_{\Omega_T} |v|^{\alpha-1}v \partial_t \varphi_h \d x\d t = \iint_{\Omega_T} |v_h|^{\alpha-1}v_h \partial_t \varphi_h \d x\d t + \iint_{\Omega_T} \big( |v|^{\alpha-1}v - |v_h|^{\alpha-1}v_h \big) \partial_t \varphi_h \d x\d t.
 \end{align}
For the second integral on the right-hand side we can use property \ref{expmol2} of Lemma \ref{lem:expmolproperties} to estimate
\begin{align*}
   \iint_{\Omega_T} \big( |v|^{\alpha-1}v - |v_h|^{\alpha-1}v_h \big) &\partial_t \varphi_h \d x\d t =  \iint_{\Omega_T} \big( |v|^{\alpha-1}v - |v_h|^{\alpha-1}v_h \big) \mathcal{H}_\delta(v_h - \tilde v)\partial_t \psi \d x\d t 
   \\
   &\quad + \iint_{\Omega_T} \big( |v|^{\alpha-1}v - |v_h|^{\alpha-1}v_h \big)\delta^{-1}\chi_{\{ 0<[v]_h - \tilde v < \delta\}}\partial_t v_h \psi \d x\d t
   \\
   &\geq  \iint_{\Omega_T} \big( |v|^{\alpha-1}v - |v_h|^{\alpha-1}v_h \big) \mathcal{H}_\delta([v]_h - \tilde v)\partial_t \psi \d x\d t.
\end{align*}
In the first term on the right-hand side of \eqref{diffpart-split} we would like to integrate by parts and use the chain rule. In the case $\alpha < 1$ the chain rule is not directly applicable, but after introducing an extra parameter $\varepsilon>0$ we can integrate by parts as follows:
\begin{align*}
 \iint_{\Omega_T} &|v_h|^{\alpha-1}v_h \partial_t \varphi_h \d x\d t = \lim_{\varepsilon \downarrow 0} \iint_{\Omega_T} (|v|_h + \varepsilon)^{\alpha-1}v_h \partial_t \varphi_h \d x\d t 
 \\
 &=  - \lim_{\varepsilon \downarrow 0} \iint_{\Omega_T}\partial_t [(|v_h| + \varepsilon)^{\alpha-1}v_h] \varphi_h  \d x\d t
 \\
 &=  - \lim_{\varepsilon \downarrow 0} \iint_{\Omega_T}[ (\alpha-1)(|v_h|+\varepsilon)^{\alpha-2}|v_h| +  (|v_h|+\varepsilon)^{\alpha-1}](\partial_t v_h)  \mathcal{H}_\delta(v_h - \tilde v)\psi \d x\d t
 \\
 &=  - \lim_{\varepsilon \downarrow 0} \iint_{\Omega_T}\partial_t \mathfrak{h}_\delta^\varepsilon(v_h, \tilde v) \psi \d x\d t
 \\
 &= \lim_{\varepsilon \downarrow 0} \iint_{\Omega_T} \mathfrak{h}_\delta^\varepsilon(v_h, \tilde v) \partial_t\psi \d x\d t
  \\
 &=  \iint_{\Omega_T} \mathfrak{h}_\delta(v_h, \tilde v) \partial_t\psi \d x\d t.
\end{align*}
 Note that integration by parts was possible due to the fact that $\varphi_h$ can be approximated in a suitable sense by smooth compactly supported functions according to Lemma \ref{lem:approx_with_smooth_funct}. In the case $\alpha \geq 1$ we can instead use the chain rule without introducing the parameter $\varepsilon$. Combining these calculations and \eqref{diffpart-split} we have
\begin{align}\label{h-to-zero-diffusion}
 \iint_{\Omega_T} |v|^{\alpha-1}v \partial_t \varphi_h \d x\d t &\geq  \iint_{\Omega_T} \big( |v|^{\alpha-1}v - |v_h|^{\alpha-1}v_h \big) \mathcal{H}_\delta(v_h - \tilde v)\partial_t \psi \d x \d t
 \\
  & \notag \quad +  \iint_{\Omega_T} \mathfrak{h}_\delta(v_h, \tilde v) \partial_t\psi \d x\d t 
 \\
 \notag & \xrightarrow[h\to 0]{} \iint_{\Omega_T} \mathfrak{h}_\delta(v, \tilde v) \partial_t\psi \d x\d t.
\end{align}
Thus combining \eqref{h-to-zero-elliptic}, \eqref{h-to-zero-rhs} and \eqref{h-to-zero-diffusion} we see that passing to the limit $h\to 0$ with our particular test function proves the inequality ``$\geq$'' in \eqref{eq:comparison-help}. Using the test function $\mathcal{H}_\delta(v_{\bar h} - \tilde v)\psi$ and performing similar calculations proves the reverse estimate.
 \end{proof}
Using similar techniques as in the previous proof, we also have the following.
 \begin{lem}\label{lem:complemma-w}
 Let $w$ be a solution to the Cauchy-Dirichlet problem with right-hand side $f_2$ and nonnegative boundary values. Let $\tilde w \in L^{\alpha+1}(\Omega)\cap \overline W^{1, {\bf p}}_{\textnormal{o}}(\Omega)$. Then
 \begin{align}\label{eq:comparison-help2}
  \iint_{\Omega_T} -\widehat{\mathfrak{h}}_\delta(w, \tilde w)\partial_t \psi + A(x,w ,\nabla w) \cdot \nabla [\widehat{\mathcal{H}}_\delta(w - \tilde w)]\psi \d x \d t =  \iint_{\Omega_T}f_2 \widehat{\mathcal{H}}_\delta(w - \tilde w)\psi \d x \d t
 \end{align}
for all $\psi \in C^\infty_{\textnormal{o}}(0, T)$. 
\end{lem}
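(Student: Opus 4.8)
The plan is to run the argument of Lemma~\ref{lem:complemma-v} in a symmetric fashion: the roles of the two solutions are swapped (now $w$ carries the nonnegative boundary data while $\tilde w$ vanishes on $\partial\Omega$), and $\mathcal{H}_\delta$, $\mathfrak{h}_\delta$, $\mathfrak{h}_\delta^\varepsilon$ are replaced everywhere by their odd reflections $\widehat{\mathcal{H}}_\delta$, $\widehat{\mathfrak{h}}_\delta$, $\widehat{\mathfrak{h}}_\delta^\varepsilon$. Concretely, I would test the weak formulation satisfied by $w$ with $\varphi_h := \widehat{\mathcal{H}}_\delta([w]_h - \tilde w)\psi$ (and afterwards with $\varphi_{\bar h} := \widehat{\mathcal{H}}_\delta(w_{\bar h} - \tilde w)\psi$), where $\psi \in C^\infty_{\textnormal{o}}(0,T)$ may be taken nonnegative, a general $\psi$ being recovered at the end by writing it as a difference of two nonnegative members of $C^\infty_{\textnormal{o}}(0,T)$ and using that every term in \eqref{eq:comparison-help2} is linear in $\psi$.

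The first step, and the one that genuinely uses the hypotheses, is to verify that $\varphi_h$ is an admissible test function, i.e. that it lies in $W^{1,\alpha+1}(0,T;L^{\alpha+1}(\Omega)) \cap L^{\bf p}(0,T; \overline W^{1, {\bf p}}_{\textnormal{o}}(\Omega))$, so that Lemma~\ref{lem:weaksol2} applies. Writing $w = w_{\textnormal{o}} + g$ with $w_{\textnormal{o}} \in L^{\bf p}(0,T;\overline W^{1,{\bf p}}_{\textnormal{o}}(\Omega)) \cap L^{\alpha+1}(\Omega_T)$ and $g \geq 0$ the boundary datum, we have $[w]_h - \tilde w = ([w_{\textnormal{o}}]_h - \tilde w) + g_h$ with $g_h \geq 0$; the bracketed term lies in $L^{\bf p}(0,T;\overline W^{1,{\bf p}}_{\textnormal{o}}(\Omega)) \cap L^{\alpha+1}(\Omega_T)$ and hence is approximated by functions $v_j \in C^\infty(\bar\Omega\times[0,T])$ with $\supp v_j \subset K_j\times[0,T]$ for compact $K_j \subset \Omega$. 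Since $\widehat{\mathcal{H}}_\delta$ vanishes on $[0,\infty)$ and $g_h \geq 0$, the function $\widehat{\mathcal{H}}_\delta(v_j + g_h)$ vanishes wherever $v_j = 0$, hence has compact spatial support; passing $j\to\infty$ shows that $\widehat{\mathcal{H}}_\delta([w]_h-\tilde w)$ is in the right space, the regularity in time being automatic from the Lipschitz bound on $\widehat{\mathcal{H}}_\delta$ together with property~\ref{expmol2} of Lemma~\ref{lem:expmolproperties}. The interplay between $g\geq 0$ and the one-sided support of $\widehat{\mathcal{H}}_\delta$ is exactly what makes this step work, and it is the main obstacle; the remainder is sign bookkeeping identical to Lemma~\ref{lem:complemma-v}.

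Granted admissibility, I would pass to the limit $h\to 0$ termwise. The elliptic term converges to $\iint_{\Omega_T} A(x,w,\nabla w)\cdot\nabla[\widehat{\mathcal{H}}_\delta(w-\tilde w)]\psi\d x\d t$ by the convergence $\nabla\varphi_h \to \nabla(\widehat{\mathcal{H}}_\delta(w-\tilde w)\psi)$ in each $L^{p_k}(\Omega_T)$, H\"older's inequality and the growth bound \eqref{cond:structure2-for-comp}; the source term converges to $\iint_{\Omega_T} f_2\,\widehat{\mathcal{H}}_\delta(w-\tilde w)\psi\d x\d t$. For the parabolic term I split $|w|^{\alpha-1}w = |w_h|^{\alpha-1}w_h + (|w|^{\alpha-1}w - |w_h|^{\alpha-1}w_h)$ and use $\partial_t\varphi_h = \widehat{\mathcal{H}}_\delta([w]_h-\tilde w)\partial_t\psi + \delta^{-1}\chi_{\{-\delta<[w]_h-\tilde w<0\}}\tfrac1h(w-w_h)\psi$; the contribution of the difference term against the second summand is nonnegative by monotonicity of $s\mapsto|s|^{\alpha-1}s$, property~\ref{expmol2} of Lemma~\ref{lem:expmolproperties}, and $\psi\geq 0$, and is discarded with the correct sign, while the $|w_h|^{\alpha-1}w_h$ part is integrated by parts in time and, via the chain rule (for $\alpha<1$ after an $\varepsilon$-regularization identifying the integrand as $\partial_t\widehat{\mathfrak{h}}_\delta^\varepsilon(w_h,\tilde w)$ and then letting $\varepsilon\downarrow 0$, and directly for $\alpha\geq 1$), equals $\iint_{\Omega_T}\widehat{\mathfrak{h}}_\delta(w_h,\tilde w)\partial_t\psi\d x\d t \to \iint_{\Omega_T}\widehat{\mathfrak{h}}_\delta(w,\tilde w)\partial_t\psi\d x\d t$. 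Collecting these yields ``$\geq$'' in \eqref{eq:comparison-help2}; rerunning the same computation with $\varphi_{\bar h}$, where $\partial_t w_{\bar h} = \tfrac1h(w_{\bar h}-w)$ reverses the sign of the discarded term, gives ``$\leq$'', and hence the claimed identity.
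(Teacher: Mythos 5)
Your proof is correct and follows the same route as the paper: write $w = w_o + g$ with $g \geq 0$, observe that $g_h \geq 0$, use the one-sided support of $\widehat{\mathcal{H}}_\delta$ (vanishing on $[0,\infty)$) to get compact spatial support of $\widehat{\mathcal{H}}_\delta([w]_h - \tilde w)$ by approximation, and then run the two-sided mollification argument of Lemma~\ref{lem:complemma-v} with $\mathcal{H}_\delta, \mathfrak{h}_\delta$ replaced by their odd reflections. The paper compresses all of this into three sentences of ``reason as in Lemma~\ref{lem:complemma-v},'' so you are reconstructing the intended argument in full. One small observation in your favor: you make explicit that the inequality step (dropping the term proportional to $(|w|^{\alpha-1}w - |w_h|^{\alpha-1}w_h)(w - w_h)\chi\,\psi$) requires $\psi \geq 0$, and that the general case is recovered by linearity — this is left implicit in the paper's proof of both Lemmas~\ref{lem:complemma-v} and~\ref{lem:complemma-w}, though it is harmless since the comparison theorem only uses nonnegative $\psi$. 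Your sign bookkeeping is accurate: $\widehat{\mathcal{H}}_\delta'(s) = \delta^{-1}\chi_{(-\delta,0)}(s)$, the dropped term is nonnegative for $v_h$ and nonpositive for $v_{\bar h}$, yielding the two inequalities that combine to the claimed identity.
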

\begin{proof}{}
 Note that since $w$ has nonnegative boundary values we can write $w=w_o + g$ where $w_o \in L^{\bf p}(0,T; \overline W^{1, {\bf p}}_{\textnormal{o}}(\Omega)) \cap L^{\alpha+1}(\Omega_T)$, $g \in L^{\bf p}(0,T;  W^{1, {\bf p}}(\Omega)) \cap L^{\alpha+1}(\Omega_T)$, and $g\geq 0$. Thus,
 \begin{align*}
  w_h(x,t) - \tilde w(x) = ((w_o)_h(x,t) - \tilde w(x)) + g_h(x,t). 
 \end{align*}
The expression in the brackets on the right-hand side forms an element of $L^{\bf p}(0,T; \overline W^{1, {\bf p}}_{\textnormal{o}}(\Omega)) \cap L^{\alpha+1}(\Omega)$ and $g_h \geq 0$, so reasoning as in the proof of Lemma \ref{lem:complemma-v} we can justify the test function $\varphi_h:= \widehat{\mathcal{H}}_\delta([w]_h - \tilde w)\psi$. The rest of the argument is analogous to the proof of Lemma \ref{lem:complemma-v}.
\end{proof}
Now we are ready to prove the comparison principle.

\noindent \textbf{Proof of Theorem \ref{thm:comparison}.} Define the set 
\begin{align*}
 Q := \Omega \times (0,T)^2,
\end{align*}
and define extensions of $v$ and $w$ as follows:
\begin{align*}
\begin{array}{rl}
 \hat v: Q \to \R, & \quad \, \hat v(x,t_1,t_2) := v(x,t_1),
 \\
 \hat w: Q \to \R, & \quad \hat w(x,t_1,t_2) := w(x,t_2).
 \end{array}
\end{align*}
Take $\psi \in C^\infty_{\textnormal{o}}((0,T)^2; [0,\infty))$. For fixed $\delta>0$ and a.e. $t_2\in (0,T)$ the function $\tilde v(x) := w(x,t_2)=:w_{t_2}(x)$ is a valid choice in Lemma \ref{lem:complemma-v} and we can use the function $\psi_{t_2}(t):= \psi(t, t_2)$ in place of the $\psi$ appearing in Lemma \ref{lem:complemma-v}. Thus, we have
\begin{align}\label{eq:t2-fixed}
  \iint_{\Omega_T} -\mathfrak{h}_\delta(v, w_{t_2})\partial_{t_1} \psi_{t_2} + A(x,v ,\nabla v) \cdot \nabla [\mathcal{H}_\delta(v - w_{t_2})]\psi_{t_2} \d x \d t_1 
  \\
 \notag  = \iint_{\Omega_T}f_1 \mathcal{H}_\delta(v - w_{t_2})\psi_{t_2} \d x \d t_1,
 \end{align}
where $v$ is to be understood as a function of $x$ and $t_1$, $w_{t_2}$ is a function of $x$, and $\psi_{t_2}$ is a function of $t_1$. Similarly, for fixed $\delta>0$ and a.e. $t_1\in (0,T)$ we have that $\tilde w(x) := v(x,t_1) =: v_{t_1}(x)$ is a valid choice in Lemma \ref{lem:complemma-w}. Using $\psi^{t_1}(t):=\psi(t_1,t)$ in place of $\psi$ in Lemma \ref{lem:complemma-w} we end up with
\begin{align}\label{eq:t1-fixed}
  \iint_{\Omega_T} -\widehat{\mathfrak{h}}_\delta(w, v_{t_1})\partial_{t_2} \psi^{t_1} + A(x,w ,\nabla w) \cdot \nabla [\widehat{\mathcal{H}}_\delta(w - v_{t_1})]\psi^{t_1} \d x \d t_2 
  \\
  \notag 
  = \iint_{\Omega_T}f_2 \widehat{\mathcal{H}}_\delta(w - v_{t_1})\psi^{t_1} \d x \d t_2,
 \end{align}
 Integrating \eqref{eq:t2-fixed} w.r.t. $t_2$, integrating \eqref{eq:t1-fixed} w.r.t. $t_1$, adding the two resulting integrals and using the relation between $\mathcal{H}_\delta$ and $\widehat{\mathcal{H}}_\delta$, we obtain
 \begin{align}\label{eq:added-integs}
  \iiint_Q \Big[&-\big(\mathfrak{h}_\delta(\hat v, \hat w)\partial_{t_1}\psi + \widehat{\mathfrak{h}}_\delta(\hat w, \hat v)\partial_{t_2} \psi \big) 
  \\
 \notag &+ [A(x, \hat v, \nabla \hat v) - A(x, \hat w, \nabla \hat w)]\cdot \nabla \mathcal{H}_\delta(\hat v - \hat w) \psi \Big]\d x \d t_1 \d t_2 
 \\
 \notag &= \iiint_Q (f_1 - f_2) \mathcal{H}_\delta(\hat v - \hat w) \psi \d x \d t_1 \d t_2 \leq 0,
 \end{align}
 where the last estimate follows from the nonnegativity of $\psi$ combined with the relation between $f_1$ and $f_2$.
Our goal is to pass to the limit $\delta \to 0$ in this estimate. For this purpose, note that due to the monotonicity condition \eqref{cond:monot} we have
\begin{align}\label{est:use-monot}
 [A(x, \hat v, \nabla \hat v) &- A(x, \hat w, \nabla \hat w)]\cdot \nabla \mathcal{H}_\delta(\hat v - \hat w) 
 \\
 \notag &= [A(x, \hat v, \nabla \hat v) - A(x, \hat v, \nabla \hat w)]\cdot (\nabla \hat v - \nabla \hat w) \mathcal{H}_\delta'(\hat v - \hat w)
 \\
 \notag & \quad + [A(x, \hat v, \nabla \hat w) - A(x, \hat w, \nabla \hat w)]\cdot (\nabla \hat v - \nabla \hat w) \mathcal{H}_\delta'(\hat v - \hat w)
 \\
 \notag &\geq [A(x, \hat v, \nabla \hat w) - A(x, \hat w, \nabla \hat w)]\cdot (\nabla \hat v - \nabla \hat w) \mathcal{H}_\delta'(\hat v - \hat w).
\end{align}
Using the continuity condition \eqref{cond:lip-cont} we may estimate the following integral involving the last expression:
\begin{align*}
 \Big|\iiint_Q &[A(x, \hat v, \nabla \hat w) - A(x, \hat w, \nabla \hat w)]\cdot (\nabla \hat v - \nabla \hat w) \mathcal{H}_\delta'(\hat v - \hat w)\psi \d x \d t_1 \d t_2 \Big|
 \\
 &\leq C\delta^{-1}\sum^N_{j=1}\iiint_{Q\cap \{ 0 < \hat v - \hat w < \delta\}}|\hat v - \hat w|\Big(\tilde c(x) + \sum^N_{k=1}|\partial_k \hat w|^{p_k}\Big)^\frac{p_j-1}{p_j}|\partial_j \hat v - \partial_j \hat w|\psi \d x \d t_1 \d t_2
 \\
 &\leq C \sum^N_{j=1}\iiint_{Q\cap \{ 0 < \hat v - \hat w < \delta\}}\Big(\tilde c(x) + \sum^N_{k=1}|\partial_k \hat w|^{p_k}\Big)^\frac{p_j-1}{p_j}|\partial_j \hat v - \partial_j \hat w|\psi \d x \d t_1 \d t_2.
\end{align*}
The function in the integral above is integrable due to H\"older's inequality and the integrability properties of the spatial partial derivatives of $\hat v$ and $\hat w$. Thus, by the dominated convergence theorem we have that
 \begin{align}\label{lim:delta-to-zero}
  \lim_{\delta \downarrow 0} \iiint_Q &[A(x, \hat v, \nabla \hat w) - A(x, \hat w, \nabla \hat w)]\cdot (\nabla \hat v - \nabla \hat w) \mathcal{H}_\delta'(\hat v - \hat w)\psi \d x \d t_1 \d t_2 = 0.
 \end{align}
Combining \eqref{eq:added-integs}, \eqref{est:use-monot} and \eqref{lim:delta-to-zero} we end up with
\begin{align}\label{limsup:delta-to-zero}
 \limsup_{\delta \downarrow 0} \iiint_Q -\big(\mathfrak{h}_\delta(\hat v, \hat w)\partial_{t_1}\psi + \widehat{\mathfrak{h}}_\delta(\hat w, \hat v)\partial_{t_2} \psi \big) \d x \d t_1 \d t_2 \leq 0.
\end{align}
The bounds and limits \eqref{h-delta-upperbnd} - \eqref{hat-h-delta-limit} show that the dominated convergence theorem can be used in \eqref{limsup:delta-to-zero} to conclude that
\begin{align*}
 \iiint_Q -(|\hat v|^{\alpha-1}\hat v - |\hat w|^{\alpha-1}\hat w)_+(\partial_{t_1}\psi + \partial_{t_2}\psi) \d x \d t_1 \d t_2 \leq 0.
\end{align*}
As in \cite{BoeDuGiaLiSche} we now take a nonnegative functions $\phi \in C^\infty_{\textnormal{o}}(0,T)$ and $\varphi \in C^\infty_{\textnormal{o}}(\R)$ such that $\varphi$ has unit $L^1$-norm,  and choose the test function $\psi$ as 
\begin{align*}
 \psi(t_1,t_2) := \frac1\varepsilon \varphi\Big(\frac{t_1 - t_2}{\varepsilon}\Big)\phi\Big(\frac{t_1 + t_2}{2}\Big),
\end{align*}
and pass to the limit $\varepsilon\to 0$ and end up with
\begin{align*}
 -\iint_{\Omega_T}(|v|^{\alpha-1}v - |w|^{\alpha-1}w)_+ \phi'(t)\d x \d t \leq 0.
\end{align*}
Taking $\phi$ of the form \eqref{func:trapets} with $t_2=\tau \in (0,T)$ and $t_1<\tau$, we have for small $\varepsilon>0$ that
\begin{align*}
 \frac1\varepsilon \int^{\tau}_{\tau-\varepsilon} \int_\Omega (|v|^{\alpha-1}v - |w|^{\alpha-1}w)_+ \d x \d t \leq \frac1\varepsilon \int^{t_1+\varepsilon}_{t_1} \int_\Omega (|v|^{\alpha-1}v - |w|^{\alpha-1}w)_+ \d x \d t.
\end{align*}
Since $v$ and $w$ satisfy the assumptions of Lemma \ref{lem:time-cont}, both $v$ and $w$ are in $C([0,T];L^{\alpha+1}(\Omega))$, and we may pass to the limit $\varepsilon \to 0$ to obtain
\begin{align*}
 \int_{\Omega} (|v|^{\alpha-1}v - |w|^{\alpha-1}w)_+(x,\tau) \d x \leq  \int_{\Omega} (|v|^{\alpha-1}v - |w|^{\alpha-1}w)_+(x,t_1) \d x.
\end{align*}
Finally, by the aforementioned time-continuity we may take $t_1\to 0$ which yields
\begin{align*}
 \int_{\Omega} (|v|^{\alpha-1}v - |w|^{\alpha-1}w)_+(x,\tau) \d x \leq  \int_{\Omega} (|v_o|^{\alpha-1}v_o - |w_o|^{\alpha-1}w_o)_+ \d x = 0,
\end{align*}
where the last step follows from the assumption $v_o \leq w_o$. This confirms that $v\leq w$ on $\Omega_T$.
\qed

\appendix
\section{Function spaces and equivalent definitions of solutions}\label{app:equiv-sol}
\noindent \textbf{Proof of Lemma \ref{lem:isomorphic-spaces}}. We want to prove that the linear map  
\begin{align*}
 S: C^\infty_{\textnormal{o}}(\Omega_T) \to L^{\bf p}(0,T; \overline W^{1, {\bf p}}_{\textnormal{o}}(\Omega)), \quad S v(t) = v(\cdot, t)
\end{align*}
extends to an isometric isomorphism on the closure of $C^\infty_{\textnormal{o}}(\Omega_T)$ in the space
\begin{align*}
 V:= \{ v \in L^1(\Omega_T) \,|\, \partial_k v \in L^{p_k}(\Omega_T)\},
\end{align*}
i.e. the space of integrable functions on $\Omega_T$ that have weak first order spatial derivatives $\partial_k v$ that are $L^{p_k}$-integrable on $\Omega_T$, endowed with the norm
\begin{align*}
 \norm{v} = \norm{v}_{L^1(\Omega_T)} + \sum^N_{k=1} \norm{\partial_k v}_{L^{p_k}(\Omega_T)}.
\end{align*}
By definition, $S$ is isometric, so it extends to an isometry on the closure of $C^\infty_{\textnormal{o}}(\Omega_T)$. Thus $S$ is also injective. It remains to show that $S$ is surjective, and for this it suffices to show that $S[C^\infty_{\textnormal{o}}(\Omega_T)]$ is dense in $L^{\bf p}(0,T; \overline W^{1, {\bf p}}_{\textnormal{o}}(\Omega))$. It follows directly from the definitions that 
\begin{align*}
 L^{\bf p}(0,T; \overline W^{1, {\bf p}}_{\textnormal{o}}(\Omega)) \subset L^1(0,T;\overline W^{1, {\bf p}}_{\textnormal{o}}(\Omega)).
\end{align*}
From this we see that for any $u \in L^{\bf p}(0,T; \overline W^{1, {\bf p}}_{\textnormal{o}}(\Omega))$ we can define the convolution $u^\varepsilon_\delta:= \eta_\varepsilon * (u\chi_\delta) \in C^\infty_{\textnormal{o}}([0,T]; \overline W^{1, {\bf p}}_{\textnormal{o}}(\Omega))$, where $\eta_\varepsilon$ is a smooth mollifier, $\varepsilon < \delta$ and $\chi_\delta$ is the characteristic function of $(\delta, T- \delta)$. It is easy to verify that
$\partial_k \circ u^\varepsilon_\delta = \eta_\varepsilon *(\partial_k \circ (\chi_\delta u))$, so that $u^\varepsilon_\delta \in L^{\bf p}(0,T; \overline W^{1, {\bf p}}_{\textnormal{o}}(\Omega))$ and we can approximates $u$ by $u^\varepsilon_\delta$ in $L^{\bf p}(0,T; \overline W^{1, {\bf p}}_{\textnormal{o}}(\Omega))$ by taking $\delta$ and $\varepsilon$ sufficiently small. We can approximate $u^\varepsilon_\delta$ in $L^{\bf p}(0,T; \overline W^{1, {\bf p}}_{\textnormal{o}}(\Omega))$ by piecewise constant functions $\sum^M_{j=1} \chi_{\Delta_j} v_j$ where $v_j \in \overline W^{1, {\bf p}}_{\textnormal{o}}(\Omega)$ and $\Delta_j = \Big[\tfrac{T(j-1)}{M}, \tfrac{Tj}{M}\Big)$. Since $u^\varepsilon_\delta$ has compact support in $(0,T)$ we can take $v_1 = v_M = \bar 0$. Such functions can in turn be approximated in $L^{\bf p}(0,T; \overline W^{1, {\bf p}}_{\textnormal{o}}(\Omega))$ by functions of the form $\sum^{M-1}_{j=2} \chi_{\Delta_j} \varphi_j$ where $\varphi_j \in C^\infty_{\textnormal{o}}(\Omega)$. Finally, by introducing another convolution we can approximate these function by functions by functions of the form
\begin{align*}
 \sum^{M-1}_{j=2} (\eta_r * \chi_{\Delta_j}) \varphi_j.
\end{align*}
The corresponding functions on $\Omega_T$,
\begin{align*}
 (x,t) \mapsto \sum^{M-1}_{j=2} (\eta_r * \chi_{\Delta_j})(t) \varphi_j(x),
\end{align*}
are evidently in $C^\infty_{\textnormal{o}}(\Omega_T)$ for sufficiently small $r$. \qed

In order to simply terminology, we will in the following not make a clear distinction of elements $\varphi \in C^\infty_{\textnormal{o}}(\Omega_T)$ and the corresponding map $t\mapsto \varphi(\cdot,t)$. This simplifies the formulation of many results, such as the following lemma.
\begin{lem}\label{lem:approx_with_smooth_funct}
The space $C^\infty_{\textnormal{o}}(\Omega_T)$ is a dense subspace of the space
 \begin{align*}
  E := \{ v \in W^{1, \alpha + 1}(0, T; L^{\alpha + 1}(\Omega)) \cap L^{\bf p}(0,T; \overline W^{1, {\bf p}}_\textnormal{o}(\Omega)) \,|\, v(0) = 0 = v(T) \},
 \end{align*}
endowed with the norm of $W^{1, \alpha + 1}(0, T; L^{\alpha + 1}(\Omega)) \cap L^{\bf p}(0,T; \overline W^{1, {\bf p}}_\textnormal{o}(\Omega))$.
\end{lem}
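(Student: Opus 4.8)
The plan is to argue in three steps, in the spirit of the proof of Lemma \ref{lem:isomorphic-spaces}: first cut off near the two endpoints of the time interval to reduce to functions with compact support in $(0,T)$; then mollify in time to gain smoothness; and finally discretize the resulting time convolution into a finite sum of elementary tensor products whose spatial factors can be replaced by functions in $C^\infty_{\textnormal{o}}(\Omega)$.

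For Step 1, fix $v\in E$ and work with its continuous representative $[0,T]\to L^{\alpha+1}(\Omega)$, for which $v(0)=v(T)=0$. For small $\varepsilon>0$ choose $\eta_\varepsilon\in C^\infty([0,T])$ with $\eta_\varepsilon\equiv 1$ on $[\varepsilon,T-\varepsilon]$, $\eta_\varepsilon\equiv 0$ near $0$ and $T$, and $|\eta_\varepsilon'|\le c/\varepsilon$; then $\eta_\varepsilon v\in E$ has compact support in $(0,T)$. I claim $\eta_\varepsilon v\to v$ in the norm of $E$ as $\varepsilon\to0$. Convergence of the spatial part and of the term $\eta_\varepsilon\partial_t v$ follows from dominated convergence, so the only delicate term is $\eta_\varepsilon' v$. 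For $t\in(0,\varepsilon)$, using $v(0)=0$, Minkowski's integral inequality and Hölder's inequality in time,
\[
 \norm{v(t)}_{L^{\alpha+1}(\Omega)}=\Big\|\int_0^t\partial_t v(s)\d s\Big\|_{L^{\alpha+1}(\Omega)}\le t^{\frac{\alpha}{\alpha+1}}\Big(\int_0^\varepsilon\norm{\partial_t v(s)}_{L^{\alpha+1}(\Omega)}^{\alpha+1}\d s\Big)^{\frac{1}{\alpha+1}},
\]
whence $\int_0^\varepsilon\norm{\eta_\varepsilon'(t)v(t)}_{L^{\alpha+1}(\Omega)}^{\alpha+1}\d t\le c\int_0^\varepsilon\norm{\partial_t v(s)}_{L^{\alpha+1}(\Omega)}^{\alpha+1}\d s\to0$ by absolute continuity of the integral; the interval $(T-\varepsilon,T)$ is treated identically. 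Thus we may assume $v$ has compact support in $(0,T)$.

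For Step 2, convolving such a $v$ in the time variable with a standard mollifier $\rho_r$ gives $v*\rho_r$, which for small $r$ still has compact support in $(0,T)$, is $C^\infty$ in time with values in $\overline W^{1,{\bf p}}_{\textnormal{o}}(\Omega)$, and converges to $v$ in $W^{1,\alpha+1}(0,T;L^{\alpha+1}(\Omega))\cap L^{\bf p}(0,T;\overline W^{1,{\bf p}}_{\textnormal{o}}(\Omega))$; here one uses that $v\in L^1(0,T;\overline W^{1,{\bf p}}_{\textnormal{o}}(\Omega))$ and that $\partial_t(v*\rho_r)=v*\rho_r'$. For Step 3, regard $(v*\rho_r)(t)=\int_0^T v(s)\rho_r(t-s)\d s$ as a Bochner integral in $\overline W^{1,{\bf p}}_{\textnormal{o}}(\Omega)$ and approximate it, together with its time derivative (obtained by differentiating $\rho_r$), uniformly in $t$ by Riemann sums $\sum_j \rho_r(\cdot-s_j)\,v(s_j)\,\Delta s_j$, i.e. by finite combinations of a smooth compactly supported function of $t$ with an element of $\overline W^{1,{\bf p}}_{\textnormal{o}}(\Omega)$. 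Replacing each $v(s_j)$ by a $C^\infty_{\textnormal{o}}(\Omega)$-approximation in the norm of $\overline W^{1,{\bf p}}_{\textnormal{o}}(\Omega)$ (possible by the very definition of that space) yields a function $\sum_j\phi_j(t)\varphi_j(x)$ with $\phi_j\in C^\infty_{\textnormal{o}}((0,T))$ and $\varphi_j\in C^\infty_{\textnormal{o}}(\Omega)$, hence an element of $C^\infty_{\textnormal{o}}(\Omega_T)$, arbitrarily close to $v*\rho_r$ in the norm of $E$. Chaining the three approximations proves the density.

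I expect Step 1 to be the main obstacle: one must exploit the vanishing of $v$ at \emph{both} endpoints together with the time-absolute continuity coming from $v\in W^{1,\alpha+1}(0,T;L^{\alpha+1}(\Omega))$ in order to absorb the blow-up factor $1/\varepsilon$ of $\eta_\varepsilon'$. Steps 2 and 3 are routine convolution and discretization arguments parallel to the proof of Lemma \ref{lem:isomorphic-spaces}; the only point worth a remark there is that the spatial regularity class $\overline W^{1,{\bf p}}_{\textnormal{o}}(\Omega)$ is preserved under each operation.
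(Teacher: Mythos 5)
Your three-step architecture (reduce to compact support in time, mollify in time, discretize into tensor products with $C^\infty_{\textnormal{o}}(\Omega)$ spatial factors) matches the paper's, but two of your devices are genuinely different. In Step 1 the paper rescales the time variable, $v^\varepsilon(x,t)=v(x,R_\varepsilon(t-\varepsilon))$ with $R_\varepsilon=T/(T-2\varepsilon)$, using $v(0)=v(T)=0$ only to ensure the zero extension stays in $W^{1,\alpha+1}$; you instead multiply by a cutoff $\eta_\varepsilon$ and kill the dangerous term $\eta_\varepsilon' v$ by writing $v(t)=\int_0^t\partial_t v\,\d s$ and applying H\"older, so that the factor $\varepsilon^{-(\alpha+1)}$ is exactly compensated by $\int_0^\varepsilon t^\alpha\,\d t$ and the remainder vanishes by absolute continuity of the integral. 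This is correct and arguably more quantitative than the rescaling, though the rescaling avoids any derivative of the cutoff. In Step 3 the paper interpolates the (by then smooth-in-time) function between its time slices $v(t_k)$ using smoothed ramps $S^{t_k,t_{k+1}}_\varepsilon$, and only then replaces each slice by a $C^\infty_{\textnormal{o}}(\Omega)$ function; you instead discretize the convolution integral itself into a Riemann sum of tensor products. Both yield the desired finite sums $\sum_j\phi_j(t)\varphi_j(x)$.

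The one place where your write-up needs more care is the Riemann-sum step: you sample the \emph{pre-mollified} $v$ at points $s_j$, but $v$ is only Bochner-integrable in time into $\overline W^{1,{\bf p}}_{\textnormal{o}}(\Omega)$, so the point values $v(s_j)$ need not be defined in that space and Riemann sums of a merely integrable Banach-valued function need not converge to the Bochner integral. This is repairable in standard ways — approximate $v$ first by simple functions $\sum_j\chi_{E_j}w_j$ in the Bochner space and convolve those (this is exactly what the paper does in the proof of Lemma \ref{lem:isomorphic-spaces}), or choose the tags $s_j$ along an almost-every translate of the grid — but as stated the uniform convergence of the sums in the spatial Sobolev norm is not justified. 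Once that is patched, the argument is complete; the remaining verifications (preservation of the class $\overline W^{1,{\bf p}}_{\textnormal{o}}(\Omega)$, convergence of the time derivative via $\partial_t(v*\rho_r)=v*\rho_r'$) are as you say routine.
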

\begin{proof}{}
We will prove the result by several steps. In each step we show that the functions from the previous step can be approximated by another class of functions which is more regular in some sense. In the final step we prove approximation with functions in $C^\infty_{\textnormal{o}}(\Omega_T)$.

\vspace{2mm}
 \noindent \textit{Step 1:} Let $v \in E$. We extend $v$ as zero outside $[0,T]$ and approximate $v$ using a re-scaling in time: $v^\varepsilon(x,t) := v(x, R_\varepsilon(t-\varepsilon))$ where $\varepsilon > 0 $ and $R_\varepsilon = T/(T -2 \varepsilon)$. Note that $v^\varepsilon$ vanishes outside $(\varepsilon, T - \varepsilon)$. Furthermore, $v^\varepsilon$ approximates $v$ in the norm of $W^{1, \alpha + 1}(0, T; L^{\alpha + 1}(\Omega)) \cap L^{\bf p}(0,T; \overline W^{1, {\bf p}}_{\textrm{o}} (\Omega))$. To see this, note first that for any $v \in L^q(0,T; L^q(\Omega)) \simeq L^q(\Omega_T)$ we may approximate $v$ using a continuous bounded function $w\in C(\Omega_T)$. Now also $w^\varepsilon$ approximates $v^\varepsilon$ and we have 
 \begin{align*}
  \norm{v - v^\varepsilon}_{L^q(\Omega_T)} \leq \norm{v - w}_{L^q(\Omega_T)} + \norm{w - w^\varepsilon}_{L^q(\Omega_T)} + \norm{w^\varepsilon - v^\varepsilon}_{L^q(\Omega_T)}.
 \end{align*}
By the previous discussion the first and third term on the right-hand side can be made arbitrarily small, and the approximation is stable as $\varepsilon \to 0$. Then, for a fixed $w$ we see that also the second term on the right-hand side can be made small by the DCT. Since $\partial_j (v^\varepsilon) = (\partial_j v)^\varepsilon$, a similar argument can be made also for approximation of the weak spatial derivatives. It remains to show that $v^\varepsilon \in W^{\alpha+1}(0,T; L^{\alpha + 1}(\Omega))$ and that $\partial_t v^\varepsilon$ approximates $\partial_t v$. Due to the fact that $v(0) = 0 = v(T)$ we have that the zero-extension of $v$ to $\R$ is in fact in $W^{\alpha+1}(\R; L^{\alpha+1}(\Omega))$. 
Thus, also the re-scaled function $v^\varepsilon$ is in $W^{\alpha+1}(\R; L^{\alpha+1}(\Omega))$. Moreover, weak derivatives behave as expected: 
\begin{align*}
 \partial_t v^\varepsilon(x,t) = R_\varepsilon \partial_t v(x, R_\varepsilon(t-\varepsilon)),
\end{align*}
and similar techniques as before can be used to verify approximation of the function and the first order time derivative w.r.t. the norm of $L^{\alpha+1}(0,T;L^{\alpha+1}(\Omega))$. The conclusion is that we may assume that $v$ vanishes in a neighborhood of $0$ and $T$.
 
 \vspace{2mm}
 \noindent \textit{Step 2:} Given $v\in E$ vanishing in a neighborhood of $0$ and $T$ we let $v_\varepsilon$ denote its mollification in time w.r.t. a smooth mollifier $\eta_\varepsilon : \R\to \R$: $v_\varepsilon = v * \eta_\varepsilon$. Reasoning as in the proof of Lemma \ref{lem:isomorphic-spaces}, we see that $v_\varepsilon$ is a well-defined map $[0,T]\to L^{\alpha+1}(\Omega) \cap \overline W^{1, \bf p}_{\textrm{o}}(\Omega)$. Since we assumed that $v$ vanishes in a neighborhood of $0$ and $T$, we have that 
  $v_\varepsilon \in C^\infty_0(0,T; L^{\alpha+1}(\Omega)\cap \overline W^{1, \bf p}_{\textrm{o}}(\Omega))$ when $\varepsilon$ is sufficiently small. Since $v_\varepsilon$ approximates $v$ in the appropriate norm of $E$, we may henceforth suppose that $v \in C^\infty_{\textnormal{o}}(0,T; L^{\alpha+1}(\Omega)\cap \overline W^{1, \bf p}_{\textrm{o}}(\Omega))$. 

 \vspace{2mm} 
\noindent \textit{Step 3:} For $0<\varepsilon < \tfrac14$ we define the piecewise affine function
\begin{align*}
 F_\varepsilon(t) := 
 \left\{
\begin{array}{ll}
0, & \quad t < 2\varepsilon
\\[5pt]
 (1-4\varepsilon)^{-1}(t- 2\varepsilon),  & t \in [2\varepsilon, 1 - 2\varepsilon]
 \\
1,  & \quad t > 1 -2\varepsilon,
\end{array}
\right.
\end{align*}
and introduce the smooth functions
\begin{align*}
 S_\varepsilon = \eta_\varepsilon * F_\varepsilon, \quad S^{a,b}_\varepsilon(t) := S_\varepsilon\Big(\frac{t-a}{b-a}\Big), \quad a < b.
\end{align*}
Take $N\in \N$ and consider the points 
\begin{align*}
 t_k:= kT/N, \quad k \in \{0, \dots, N\}.
\end{align*}
Define now the function
\begin{align*}
 v^N_\varepsilon(t) = v(t_k)\big(1 - S^{t_k, t_{k+1}}_\varepsilon(t)\big) + v(t_{k+1}) S^{t_k, t_{k+1}}_\varepsilon(t), \quad t \in [t_k, t_{k+1}].
\end{align*}
Note that the pointwise values $v(t_k)$ and $v(t_{k+1})$ are well defined due to the continuity of $v$ on the time interval.  We show that we can approximate $v$ w.r.t.~the norm of $W^{1, \alpha + 1}(0, T; L^{\alpha + 1}(\Omega)) \cap L^{\bf p}(0,T; \overline W^{1, {\bf p}}_{\textrm{o}} (\Omega))$ using functions of the form $v^N_\varepsilon$. First of all, these functions belong to the right space by construction. Note now that for $q\in \{\alpha+1, 1\}$,
\begin{align*}
 &\int^T_0  \norm{v^N_\varepsilon  - v}_{L^q(\Omega)}^q \d t = \sum^{N-1}_{k=0}\int^{t_{k+1}}_{t_k}\norm{v^N_\varepsilon - v}_{L^q(\Omega)}^q \d t 
 \\
 &\leq \sum^{N-1}_{k=0}\int^{t_{k+1}}_{t_k}\big( \big(1 - S^{t_k, t_{k+1}}_\varepsilon(t)\big)\norm{v(t_k) - v(t)}_{L^q(\Omega)} + S^{t_k, t_{k+1}}_\varepsilon(t) \norm{v(t_{k+1}) - v(t)}_{L^q(\Omega)} \big)^q \d t
\end{align*}
The functions $S^{t_k, t_{k+1}}_\varepsilon$ are bounded, and $v$ is uniformly continuous on the interval $[0,T]$ into $L^q(\Omega)$ so the convergence is clear as $N \to \infty$. A similar argument works for approximation in the spatial derivatives since $v$ is uniformly continuous also into $\overline W^{1,{\bf p}}_{\textnormal{o}}(\Omega)$. To show approximation in the time derivative, note that 
\begin{align*}
 {v^N_\varepsilon}'(t) = (v(t_{k+1}) - v(t_k) ){(S^{t_k, t_{k+1}}_\varepsilon)}'(t) &= \frac{(v(t_{k+1}) - v(t_k))}{t_{k+1} - t_k}{S_\varepsilon}'\Big(\frac{t-t_k}{t_{k+1} - t_k}\Big)
 \\
 & = {S_\varepsilon}'\Big(\frac{t-t_k}{t_{k+1} - t_k}\Big) \dashint^{t_{k+1}}_{t_k} v'(s) \d s , \quad t \in [t_k, t_{k+1}],
\end{align*}
where the last step follows from the smoothness of $v$ on the time interval. From the definition of $S_\varepsilon$ it follows that 
\begin{align*}
 {S_\varepsilon}'\Big(\frac{t-t_k}{t_{k+1} - t_k}\Big) = \frac{1}{1 - 4\varepsilon} = 1 + \frac{4\varepsilon}{1 - 4\varepsilon}, \quad t_k + 3\varepsilon \Delta_N < t < t_{k+1} - 3\varepsilon\Delta_N,
\end{align*}
where $\Delta_N = T/N$. Outside of this interval we have that $S_\varepsilon'$ is nonnegative and bounded from above by $1/(1-4\varepsilon)$. Putting together these observations we have that 
\begin{align}\label{est:incredible-mess}
 \int^T_0 & \norm{ {v^N_\varepsilon}'(t) - v'(t)}_{L^{\alpha +1}(\Omega)}^{\alpha+1} \d t = \sum^{N-1}_{k=0} \int^{t_{k+1}}_{t_k} \norm{ {v^N_\varepsilon}'(t) - v'(t)}_{L^{\alpha +1}(\Omega)}^{\alpha+1} \d t
 \\
 \notag & = \sum^{N-1}_{k=0} \int_{[t_k, t_k + 3\varepsilon \Delta_N]\cup[t_{k+1} - 3\varepsilon \Delta_N, t_{k+1}]} \norm{ {v^N_\varepsilon}'(t) - v'(t)}_{L^{\alpha +1}(\Omega)}^{\alpha+1} \d t
 \\
 \notag & \quad + \sum^{N-1}_{k=0} \int^{t_{k+1} - 3\varepsilon \Delta_N}_{t_k + 3\varepsilon \Delta_N} \Big|\Big| \Big( 1 + \frac{4\varepsilon}{1 - 4\varepsilon}\Big) \dashint^{t_{k+1}}_{t_k} v'(s) \d s - v'(t) \Big|\Big|_{L^{\alpha +1}(\Omega)}^{\alpha+1} \d t.
\end{align}
The second sum on the right-hand side of \eqref{est:incredible-mess} we estimate as 
\begin{align}\label{est:mess2}
 \sum^{N-1}_{k=0} & \int^{t_{k+1} - 3\varepsilon \Delta_k}_{t_k + 3\varepsilon \Delta_k} \Big|\Big| \Big( 1 + \frac{4\varepsilon}{1 - 4\varepsilon}\Big) \dashint^{t_{k+1}}_{t_k} v'(s) \d s - v'(t) \Big|\Big|_{L^{\alpha +1}(\Omega)}^{\alpha+1} \d t 
 \\
\notag &\leq  c \sum^{N-1}_{k=0} \int^{t_{k+1} - 3\varepsilon \Delta_k}_{t_k + 3\varepsilon \Delta_k}\Big|\Big| \dashint^{t_{k+1}}_{t_k} \big[ v'(s) - v'(t) \big]\d s \Big|\Big|^{\alpha+1}_{L^{\alpha +1}(\Omega)}\d t 
 \\
\notag & \quad + c \varepsilon^{\alpha+1} \sum^{N-1}_{k=0} \int^{t_{k+1} - 3\varepsilon \Delta_k}_{t_k + 3\varepsilon \Delta_k} \Big|\Big| \dashint^{t_{k+1}}_{t_k} v'(s) \d s \Big|\Big|^{\alpha+1}_{L^{\alpha +1}(\Omega)} \d t.
\end{align}
By the uniform continuity of $v'$ on the closed interval $[0,T]$ into $L^{\alpha+1}$, we can make the first sum on the right-hand side of \eqref{est:mess2} arbitrarily small by choosing $N$ large. Then, with $N$ fixed we can use the boundedness of $v'$ as a map on the interval $[0,T]$ into $L^{\alpha+1}$ to ensure that also the second sum on the right-hand side of \eqref{est:mess2} is small, by choosing $\varepsilon$ small. Finally, note that ${v^N_\varepsilon}'$ has a uniform bound as a map on the interval $[0,T]$ into $L^{\alpha+1}$ as $\varepsilon\to 0$. This shows that also the first sum on the right-hand side of \eqref{est:incredible-mess} converges to zero as $\varepsilon\to 0$. Thus, we have shown that functions of the form $v^N_\varepsilon$ approximate $v$ in the norm of $W^{1, \alpha + 1}(0, T; L^{\alpha + 1}(\Omega)) \cap L^{\bf p}(0,T; \overline W^{1, {\bf p}}_\textrm{o}(\Omega))$ as $N$ is chosen large and $\varepsilon$ is chosen small.

 \vspace{2mm} 
\noindent \textit{Step: 4:} Given a function $v^N_\varepsilon$ we can for every $k \in \{0, \dots, N\}$ choose $\varphi_k \in C^\infty_{\textnormal{o}}(\Omega)$ which approximates $v(t_k)$ in $L^{\alpha+1}(\Omega) \cap \overline W^{1, {\bf p}}_\textrm{o}(\Omega)$. Finally, we define 
\begin{align*}
 \varphi(x,t) := \varphi_k(x)\big(1 - S^{t_k, t_{k+1}}_\varepsilon(t)\big) + \varphi_{k+1}(x) S^{t_k, t_{k+1}}_\varepsilon(t), \quad t \in [t_k, t_{k+1}].
\end{align*}
Note that due to the construction of $v$, also $v^N_k$ will vanish when $t$ is near $0$ or $T$ and thus the approximating $\varphi_k$ can be taken to be $0$ when $k$ is close to $0$ or $N$. This means that $\varphi \in C^\infty_o(\Omega_T)$. Since $\varphi$ clearly approximates $v^N_\varepsilon$ in the norm of $W^{1, \alpha + 1}(0, T; L^{\alpha + 1}(\Omega)) \cap L^{\bf p}(0,T; \overline W^{1, {\bf p}}_\textrm{o}(\Omega))$, we are done.
\end{proof}

The previous approximation result is useful for showing that one can use a larger function class in the weak formulation with general vector field $\mathcal{A}$.
\begin{lem}\label{lem:weaksol2}
Let $\mathcal{A}:\Omega_T \to \R^N$, $\mathcal{A}_j \in L^{p_j}(\Omega_T)$, let $f\in L^\frac{\alpha+1}{\alpha}(\Omega_T)$ and suppose that $u \in L^{\bf p}(0,T; W^{1, {\bf p}}(\Omega)) \cap L^{\alpha+1}(\Omega_T)$. Then
\begin{align}\label{gggg}
 \iint_{\Omega_T} \mathcal{A} \cdot \nabla \varphi - |u|^{\alpha-1}u \partial_t \varphi \d x \d t = \iint_{\Omega_T} f \varphi\d x\d t,
\end{align}
for all $\varphi \in C^\infty_{\textnormal{o}}(\Omega_T)$ iff
\begin{align}\label{weakdef-new}
&\iint_{\Omega_T} \mathcal{A} \cdot \nabla v - |u|^{\alpha-1} u\partial_t v \d x\d t= \iint_{\Omega_T} f v \d x \d t,
\end{align}
holds for all $v \in W^{1, \alpha + 1}(0, T; L^{\alpha + 1}(\Omega)) \cap L^{\bf p}(0,T; \overline W^{1, {\bf p}}_{\textrm{o}}(\Omega))$ for which $\varphi(0) = \bar 0 = \varphi(T)$. 
\end{lem}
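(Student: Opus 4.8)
The plan is to read the equivalence off from the density result of Lemma \ref{lem:approx_with_smooth_funct}. The implication ``\eqref{weakdef-new} $\Rightarrow$ \eqref{gggg}'' is trivial, since every $\varphi \in C^\infty_{\textnormal{o}}(\Omega_T)$ lies in $W^{1,\alpha+1}(0,T;L^{\alpha+1}(\Omega)) \cap L^{\bf p}(0,T;\overline W^{1,{\bf p}}_{\textrm{o}}(\Omega))$ and satisfies $\varphi(0) = \bar 0 = \varphi(T)$; so the whole content lies in the converse. I would first record that, by the embedding $W^{1,\alpha+1}(0,T;L^{\alpha+1}(\Omega)) \hookrightarrow C([0,T];L^{\alpha+1}(\Omega))$, every $v$ admissible in \eqref{weakdef-new} has well-defined traces $v(0), v(T) \in L^{\alpha+1}(\Omega)$, so that the class of such $v$ is exactly the space $E$ appearing in Lemma \ref{lem:approx_with_smooth_funct}.

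Assuming \eqref{gggg}, I would fix $v \in E$ and invoke Lemma \ref{lem:approx_with_smooth_funct} to choose a sequence $(\varphi_j)_{j\in\N} \subset C^\infty_{\textnormal{o}}(\Omega_T)$ with $\varphi_j \to v$ in the norm of $W^{1,\alpha+1}(0,T;L^{\alpha+1}(\Omega)) \cap L^{\bf p}(0,T;\overline W^{1,{\bf p}}_{\textrm{o}}(\Omega))$. This single convergence simultaneously yields $\varphi_j \to v$ in $L^{\alpha+1}(\Omega_T)$, $\partial_t \varphi_j \to \partial_t v$ in $L^{\alpha+1}(\Omega_T)$, and $\partial_k \varphi_j \to \partial_k v$ in $L^{p_k}(\Omega_T)$ for each $k\in\{1,\dots,N\}$. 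The plan is then to write \eqref{gggg} with $\varphi = \varphi_j$ and pass to the limit $j\to\infty$ in each of the three integrals.

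Each limit is a routine application of Hölder's inequality with conjugate exponents. For the elliptic term, pair each component $\mathcal{A}_k$ against $\partial_k\varphi_j \to \partial_k v$ in $L^{p_k}(\Omega_T)$, using the integrability of $\mathcal{A}_k$ as the conjugate exponent. For the right-hand side, pair $f \in L^{\frac{\alpha+1}{\alpha}}(\Omega_T)$ against $\varphi_j \to v$ in $L^{\alpha+1}(\Omega_T)$. For the parabolic term, the only point worth recording is that $u \in L^{\alpha+1}(\Omega_T)$ forces $|u|^{\alpha-1}u \in L^{\frac{\alpha+1}{\alpha}}(\Omega_T)$, since $\iint_{\Omega_T} \big||u|^{\alpha-1}u\big|^{\frac{\alpha+1}{\alpha}}\d x\d t = \iint_{\Omega_T} |u|^{\alpha+1}\d x\d t < \infty$; pairing this against $\partial_t\varphi_j \to \partial_t v$ in $L^{\alpha+1}(\Omega_T)$ gives the convergence. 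Combining the three limits turns \eqref{gggg} for $\varphi_j$ into \eqref{weakdef-new} for $v$, and since $v \in E$ was arbitrary this completes the argument.

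I do not expect a genuine obstacle: essentially all the analytic work is already carried out in Lemma \ref{lem:approx_with_smooth_funct}. The only things needing care are (i) the bookkeeping that identifies the class of admissible test functions in \eqref{weakdef-new} with the space $E$ of that lemma, matching in particular the endpoint conditions, and (ii) checking that the single norm used there dominates each of the quantities entering the three limit passages, together with the elementary observation $|u|^{\alpha-1}u \in L^{\frac{\alpha+1}{\alpha}}(\Omega_T)$.
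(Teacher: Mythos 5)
Your proposal is correct and mirrors the paper's own proof exactly: the trivial direction is observed, the admissible class is identified with the space $E$ of Lemma \ref{lem:approx_with_smooth_funct}, a sequence $\varphi_j \in C^\infty_{\textnormal{o}}(\Omega_T)$ approximating $v$ in that norm is chosen, and the limit is passed term by term via H\"older with conjugate exponents (implicitly using, as you correctly note, that the statement's $\mathcal{A}_j \in L^{p_j}$ should read $\mathcal{A}_j \in L^{p_j'}$, consistent with Lemma \ref{lem:before-time-cont}). No gaps.
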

\noindent Since every function in $W^{1, \alpha + 1}(0, T; L^{\alpha + 1}(\Omega))$ has a representative in $C([0,T]; L^{\alpha+1}(\Omega))$, speaking of the pointwise behavior of $\varphi$ at $0$ and $T$ makes sense. 

\vspace{1mm}
\noindent \textbf{Proof of Lemma \ref{lem:weaksol2}.} The condition \eqref{weakdef-new} implies \eqref{gggg} simply because the function class is in \eqref{weakdef-new} is bigger. For the converse assume \eqref{gggg} and let $v$ be as in \eqref{weakdef-new}. By Lemma \ref{lem:approx_with_smooth_funct}, there is a sequence $v_j \in C^\infty_\textnormal{o}(\Omega_T)$ which converges to $v$ in the norm of  $W^{1, \alpha + 1}(0, T; L^{\alpha + 1}(\Omega)) \cap L^{\bf p}(0,T; \overline W^{1, {\bf p}}_\textrm{o}(\Omega))$. By assumption, \eqref{eq:weak_form} holds with $\varphi = v_j$. Since every term in the integrals consists of factors with matching H\"older exponents, we are able to pass to the limit $j\to \infty$, replacing $v_j$ by $v$.
\qed

For solutions belonging to $C([0,T]; L^{\alpha+1}(\Omega))$, we also have the following equivalent formulation.
\begin{lem}\label{lem:3rd-equiv-formulation}
 Suppose that $u \in C([0,T]; L^{\alpha+1}(\Omega)) \cap L^{\bf p}(0,T; W^{1, {\bf p}}(\Omega))$, and let $\mathcal{A}$ and $f$ be as in Lemma \ref{lem:weaksol2}. Then \eqref{gggg} holds iff
 \begin{align}\label{eq:3rd-formulation}
  &\iint_{\Omega \times [t_1,t_2]} \mathcal{A} \cdot \nabla v - |u|^{\alpha-1} u\partial_t v \d x\d t + \Big[ \int_\Omega |u|^{\alpha-1} u v \d x \Big]^{t_2}_{t_1} = \iint_{\Omega \times [t_1,t_2]} f \varphi \d x \d t.
 \end{align}
for all $v \in W^{1, \alpha + 1}(0, T; L^{\alpha + 1}(\Omega)) \cap L^{\bf p}(0,T; \overline W^{1, {\bf p}}_{\textnormal{o}}(\Omega))$ and all $0\leq t_1 < t_2 \leq T$.
\end{lem}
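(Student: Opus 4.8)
The plan is to derive the equivalence from Lemma~\ref{lem:weaksol2} together with a time cut-off argument. The implication \eqref{eq:3rd-formulation}$\Rightarrow$\eqref{gggg} is essentially free: every $\varphi\in C^\infty_{\textnormal{o}}(\Omega_T)$ belongs to $W^{1,\alpha+1}(0,T;L^{\alpha+1}(\Omega))\cap L^{\bf p}(0,T;\overline W^{1,{\bf p}}_{\textnormal{o}}(\Omega))$ (cf.\ Lemma~\ref{lem:approx_with_smooth_funct}) and satisfies $\varphi(\cdot,0)=\bar 0=\varphi(\cdot,T)$, so choosing $v=\varphi$, $t_1=0$, $t_2=T$ in \eqref{eq:3rd-formulation} makes the boundary term vanish and returns exactly \eqref{gggg}.

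For the converse I would first invoke Lemma~\ref{lem:weaksol2} to upgrade \eqref{gggg} to the statement that \eqref{weakdef-new} holds for every $v$ in the above space whose traces at $0$ and $T$ vanish. Now fix an arbitrary such $v$ (no trace condition imposed) and $0\le t_1<t_2\le T$. For $0<\varepsilon<(t_2-t_1)/2$ let $\chi_\varepsilon$ be the Lipschitz function equal to $0$ off $[t_1,t_2]$, rising affinely from $0$ to $1$ on $[t_1,t_1+\varepsilon]$, equal to $1$ on $[t_1+\varepsilon,t_2-\varepsilon]$, and decreasing affinely back to $0$ on $[t_2-\varepsilon,t_2]$. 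Then $v\chi_\varepsilon$ again lies in the admissible class, it vanishes at $0$ and $T$ (also in the limiting cases $t_1=0$ or $t_2=T$, since $\chi_\varepsilon$ itself then vanishes there), and $\partial_t(v\chi_\varepsilon)=\chi_\varepsilon\,\partial_t v+v\,\chi_\varepsilon'$. Inserting $\varphi=v\chi_\varepsilon$ into \eqref{weakdef-new} and isolating the term carrying $\chi_\varepsilon'$ gives
\begin{align*}
 \iint_{\Omega_T}\big(\mathcal{A}\cdot\nabla v-|u|^{\alpha-1}u\,\partial_t v-fv\big)\chi_\varepsilon\,\d x\d t=\iint_{\Omega_T}|u|^{\alpha-1}u\,v\,\chi_\varepsilon'\,\d x\d t.
\end{align*}
As $\varepsilon\to0$ we have $\chi_\varepsilon\to\chi_{(t_1,t_2)}$ boundedly and a.e., so the left-hand side converges by dominated convergence to the same integrand over $\Omega\times(t_1,t_2)$, using that each of $\mathcal{A}\cdot\nabla v$, $|u|^{\alpha-1}u\,\partial_t v$ and $fv$ is integrable over $\Omega_T$.

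The crux is the limit of the right-hand side. Since $\chi_\varepsilon'=\varepsilon^{-1}$ on $[t_1,t_1+\varepsilon]$ and $\chi_\varepsilon'=-\varepsilon^{-1}$ on $[t_2-\varepsilon,t_2]$, that integral equals $\tfrac1\varepsilon\int_{t_1}^{t_1+\varepsilon}G(t)\,\d t-\tfrac1\varepsilon\int_{t_2-\varepsilon}^{t_2}G(t)\,\d t$, where $G(t):=\int_\Omega |u|^{\alpha-1}u(x,t)\,v(x,t)\,\d x$. By Lemma~\ref{lem:equiv-timecont}, the hypothesis $u\in C([0,T];L^{\alpha+1}(\Omega))$ gives $|u|^{\alpha-1}u\in C([0,T];L^{\frac{\alpha+1}{\alpha}}(\Omega))$, while $v$ has a representative in $C([0,T];L^{\alpha+1}(\Omega))$; continuity of the $L^{\frac{\alpha+1}{\alpha}}$--$L^{\alpha+1}$ duality pairing then shows $G\in C([0,T])$, so the two Steklov averages converge to $G(t_1)$ and $G(t_2)$. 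Passing to the limit yields
\begin{align*}
 \iint_{\Omega\times[t_1,t_2]}\big(\mathcal{A}\cdot\nabla v-|u|^{\alpha-1}u\,\partial_t v\big)\,\d x\d t+\Big[\int_\Omega |u|^{\alpha-1}u\,v\,\d x\Big]^{t_2}_{t_1}=\iint_{\Omega\times[t_1,t_2]}fv\,\d x\d t,
\end{align*}
which is precisely \eqref{eq:3rd-formulation}. The only genuinely delicate point — and the reason the time-continuity assumption on $u$ cannot be dropped — is the continuity of $G$ needed to identify the limits of the Steklov averages with the pointwise traces; the remaining manipulations are routine.
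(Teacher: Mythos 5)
Your proof is correct and follows essentially the same route as the paper's: both directions use the same trapezoidal cut-off (the paper's $\psi_\varepsilon$ is your $\chi_\varepsilon$), both split the $\partial_t(\psi_\varepsilon v)$ term into a $\psi_\varepsilon\partial_t v$ piece and a $\psi_\varepsilon' v$ piece, and both pass to the limit in the Steklov averages using time-continuity of $t\mapsto\int_\Omega|u|^{\alpha-1}uv\,\d x$. You make explicit the appeal to Lemma~\ref{lem:equiv-timecont} to upgrade $u\in C([0,T];L^{\alpha+1}(\Omega))$ to $|u|^{\alpha-1}u\in C([0,T];L^{\frac{\alpha+1}{\alpha}}(\Omega))$, a step the paper states more tersely, but the underlying argument is identical.
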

\begin{proof}{}
 Let $v$ be as in the statement of the lemma. Suppose that $u$ satisfies \eqref{gggg}, and let $0 \leq t_1 < t_2 \leq T$. Define for small $\varepsilon>0$ the trapezoidal function
 \begin{align}\label{func:trapets}
 \psi_\varepsilon(t) := 
 \left\{
\begin{array}{ll}
0, & \quad t < t_1
\\[5pt]
 \varepsilon^{-1}(t - t_1),  & \quad t \in [t_1, t_1 + \varepsilon]
 \\
1,  & \quad t \in (t_1+ \varepsilon, t_2 - \varepsilon)
\\
1 - \varepsilon^{-1}(t - t_2 + \varepsilon), & \quad t \in [t_2 - \varepsilon, t_2]
\\
0, & \quad t > t_2.
\end{array}
\right.
\end{align}
Then $\psi_\varepsilon v$ belongs to $W^{1, \alpha + 1}(0, T; L^{\alpha + 1}(\Omega)) \cap L^{\bf p}(0,T; \overline W^{1, {\bf p}}_{\textnormal{o}}(\Omega))$ and $\psi_\varepsilon v$ vanishes at $0$ and $T$ so by Lemma \ref{lem:weaksol2} we have that 
\begin{align*}
&\iint_{\Omega_T} \mathcal{A}\cdot \nabla (\psi_\varepsilon v) - |u|^{\alpha-1} u\partial_t (\psi_\varepsilon v) \d x\d t= \iint_{\Omega_T} f \psi_\varepsilon v\d x \d t.
\end{align*}
Passing to the limit in the terms involving $\mathcal{A}$ and $f$ is easy, as $\psi_\varepsilon$ converges to the characteristic function of the interval $[t_1, t_2]$. For the remaining term we note that 
\begin{align*}
 \iint_{\Omega_T} |u|^{\alpha-1} u\partial_t (\psi_\varepsilon v) \d x\d t =  \iint_{\Omega_T} |u|^{\alpha-1} u \psi_\varepsilon \partial_t v \d x\d t + \iint_{\Omega_T} |u|^{\alpha-1} u  v \psi_\varepsilon'  \d x\d t.
\end{align*}
Passing to the limit in the first term is again easy. The second term becomes 
\begin{align}\label{simple-step}
 \iint_{\Omega_T} |u|^{\alpha-1} u  v \psi_\varepsilon'  \d x\d t = \frac1\varepsilon \int^{t_1 + \varepsilon}_{t_1} \int_\Omega |u|^{\alpha-1} u  v \d x \d t - \frac1\varepsilon \int^{t_2}_{t_2 - \varepsilon} \int_\Omega |u|^{\alpha-1} u  v \d x \d t.
\end{align}
Due to the time continuity of $|u|^{\alpha - 1}u$ and $v$ into $L^p$-spaces with matching exponents, we have that $|u|^{\alpha - 1}u v$ is continuous on $[0,T]$ into $L^1(\Omega)$. Therefore, passing to the limit $\varepsilon\to 0$  on the right-hand side of \eqref{simple-step} we end up with the last term on the left-hand side of \eqref{eq:3rd-formulation}. Putting everything together, we have verified \eqref{eq:3rd-formulation}. Conversely, if \eqref{eq:3rd-formulation} is satisfied for all $v$ as in the statement of the theorem, then especially it holds if $v(0) = v(T) = 0$, and in this case \eqref{eq:3rd-formulation} reduces to \eqref{weakdef-new}, which by Lemma \ref{lem:weaksol2} is equivalent to \eqref{gggg}.
\end{proof}


\begin{thebibliography}{99}
\bibitem{AlLu} H.~W.~Alt, S.~Luckhaus: \emph{Quasilinear elliptic-parabolic differential equations}, Math. Z., 183, 3, 311--341, 1983.

\bibitem{AvLu} B.~Avelin, T.~Lukkari: \emph{A comparison principle for the porous medium equation and its consequences}, Rev. Mat. Iberoam. 33, 2, 573--594, 2017.

\bibitem{Ba} A.~Bamberger: \emph{\`Etude d'une \`equation doublement non lin\`eaire}, J. Funct. Anal. 24, 2, 148--155, 1977.

\bibitem{Be} F.~Bernis: \emph{Existence results for doubly nonlinear higher order parabolic equations on unbounded domains}, Math. Am., 279, 3, 373--394, 1988.

\bibitem{BoeDuGiaLiSche} V. B\"ogelein, F. Duzaar, U. Gianazza, N. Liao, Christoph Scheven: \emph{H\"older Continuity of the Gradient of Solutions to Doubly Non-Linear Parabolic Equations}

\bibitem{BoeDuKoSc} V.~B\"ogelein, F.~Duzaar, R.~Korte and C.~Scheven: \emph{The higher integrability of weak solutions of porous medium systems}, Adv. Nonlinear Anal. 8, 1, 1004--1034, 2018.

\bibitem{BoeDuMa} V.~B\"ogelein, F.~Duzaar, P.~Marcellini: \emph{Parabolic Systems with p,q-Growth: A Variational Approach.} Arch. Ration. Mech. Anal. 210. 219--267, 2013.

\bibitem{BoeStru} V.~B\"ogelein, M.~Strunk: \emph{A comparison principle for doubly nonlinear parabolic partial differential equations}, Annali di Matematica 203, 779--804, 2024.

\bibitem{CiaMoVe} S.~Ciani, S.~Mosconi and  V.~ Vespri: \emph{Parabolic Harnack estimates for anisotropic slow diffusion}, Journal d'Analyse math\'ematique, 2022.

\bibitem{CiaVeVe} S.~Ciani, V.~Vespri, M.~Vestberg: \emph{Boundedness, ultracontractive bounds and optimal evolution of the support for doubly nonlinear diffusion}, \url{https://arxiv.org/abs/2306.17152}.

\bibitem{DeTe}S~.P~Degtyarev and A.~F.~Tedeev: \emph{On the solvability of the Cauchy problem with growing initial data for a class of anisotropic parabolic equations.} J. Math. Sci., 181, 28--46, 2012.

\bibitem{Di} J.~I.~D\`iaz: \emph{Qualitative study of nonlinear parabolic equations: an introduction}, Extr. Math. 16, 3, 303--341, 2001.

\bibitem{FeVaVo} F.~ Feo, J.~L.~V\'azquez and B.~ Volzone:  \emph{
 Anisotropic $p$- Laplacian Evolution of Fast Diffusion Type.} Advanced Nonlinear Studies, 21, 523-555, 2021.

\bibitem{Ish} K.~Ishige: \emph{On the Existence of Solutions of the Cauchy
Problem for a Nonlinear Diffusion Equation}, SIAM J. Math. Anal., 27, 5, 1235--1260, 1996.

\bibitem{Iv} A.~V.~Ivanov: \emph{Existence and uniqueness of a regular solution of the Cauchy–Dirichlet problem for doubly nonlinear parabolic equations}, Zeitschrift f\"ur Anal. und ihre Anwendungen 14,4, 751--777, 1995.

\bibitem{IvMkJae} A.~V.~Ivanov, P.~Mkrtychan, W.~J\"ager: \emph{Existence and uniqueness of a regular solution of the Cauchy–Dirichlet problem for a class of doubly nonlinear parabolic equations}, J. Math. Sci. 1, 84, 845--855, 1997.

\bibitem{KiLi} J.~Kinnunen, P.~Lindqvist: \emph{Pointwise behaviour of semicontinuous supersolutions to a quasilinear parabolic equation}, Ann. Mat. Pura Appl. (4) 185, 411--435, 2006.

\bibitem{KiLiLu} J.~Kinnunen, P.~Lindqvist, T.~Lukkari: \emph{Perron's method for the porous medium equation}, J. Eur. Math. Soc. 18, 12, 2953--2969, 2016.

\bibitem{La}G. I.~Laptev: \emph{Weak solutions of second-order quasilinear parabolic equations with double non-linearity}, Sbornik: Mathetmatics, 188:9, 1343--1370, 1997.

\bibitem{La2} I.~Laptev: \emph{Solvability of second-order quasilinear parabolic equations with double degeneration}, Siberian Mathematical Journal, 38, 6, 1160 --1177, 1997.

\bibitem{LeEl} A.~R.~Leggat, S.~.El-Hadi Miri: \emph{Existence and multiplicity results for a doubly anisotropic problem with sign-changing nonlinearity}, Note di Matematica, 39, 2, 1--12, 2019. 

\bibitem{LiLi} E.~Lindgren, P.~Lindqvist: \emph{On a comparison principle for Trudinger's equation}, Adv. Calc. Var., 15, 3, 401--415, 2022.

\bibitem{LiPe} Zhengyan Li, L.~A.~Peletier: \emph{A Comparison Principle for the Porous Media Equation with Absorption}, Journal of Mathematical Analysis and Applications, 165, 457--471, 1992.

\bibitem{Ot} Otto.~F: \emph{$L^1$-Contraction and Uniqueness for quasilinear elliptic–parabolic equations}, J. Differ. Equ. 131, 1, 20--38, 1996.

\bibitem{Raviart} Raviart, P.A., 1970. Sur la résolution de certaines équations paraboliques non linéaires. Journal of Functional Analysis, 5(2), pp.299-328.

\bibitem{Sa} M.~Sango: \emph{On a doubly degenerate quasilinear anisotropic parabolic equation}, Analysis 23, 249--260, 2003.

\bibitem{So} S.~L.~Sobolev: \emph{Some Applications of Functional Analysis in Mathematical Physics}, Third Edition, American Mathematical Society, 1991.

\bibitem{St} S.~Sturm: \emph{Existence of weak solutions of doubly nonlinear parabolic equations}, J. Math. Anal. Appl. 455, 842--863, 2017.

\bibitem{Va2} J.~L.~Vazquez: \emph{The Porous Medium Equation: Mathematical Theory}, Oxford University Press on Demand, Oxford, 2007.

\bibitem{Va} J.~L.~Vazquez: \emph{The very singular solution for the Anisotropic Fast Diffusion Equation and its consequences}, Nonlinear Analysis 245, 113556, 2024.

\bibitem{ZoReWa} Weilin Zou, Yuanchun Ren, Wei Wang: \emph{Existence and regularity results for anisotropic parabolic equations with degenerate coercivity}, \url{https://arxiv.org/pdf/2303.09386}

\end{thebibliography}
\end{document}